\documentclass{article}%
\usepackage{amssymb}%
\usepackage{amsmath, bbm, mathtools}%
\usepackage{amsfonts}%
\usepackage{authblk}
\usepackage{graphicx}
\usepackage[a4paper, left=2.5cm, right=2.5cm, top=3cm, bottom=3cm]{geometry}
\usepackage{shuffle}
\usepackage{csquotes}
\usepackage{comment}
\usepackage{multirow}
\usepackage{subcaption}
\usepackage{float}

\providecommand{\U}[1]{\protect\rule{.1in}{.1in}}
\newtheorem{theorem}{Theorem}

\newtheorem{corollary}[theorem]{Corollary}

\newtheorem{definition}[theorem]{Definition}

\newtheorem{lemma}[theorem]{Lemma}

\newtheorem{proposition}[theorem]{Proposition}
\newtheorem{remark}[theorem]{Remark}

\newenvironment{proof}[1][Proof]{\noindent\textbf{#1.} }{\ \rule{0.5em}{0.5em}}

\newcommand{\floor}[1]{\left \lfloor {#1}\right \rfloor}

\newcommand{\ceil}[1]{\left \lceil {#1}\right \rceil}
\allowdisplaybreaks

\title{Signature Kernel and Schwinger-Dyson Kernel Equations as Two-Parameter Rough Differential Equations}
\author{Thomas Cass, Dan Crisan, Andrea Iannucci\thanks{Corresponding author: \texttt{andrea.iannucci22@imperial.ac.uk}}  , William F. Turner}

\affil{Department of Mathematics, Imperial College London}

\begin{document}
\maketitle
\begin{abstract}
We develop a rough-path framework for two-parameter rough differential equations on rectangular and simplicial domains, motivated by the signature kernel and Schwinger--Dyson kernel equations. The theory is formulated in spaces of jointly controlled rough paths and is based on a robust two-parameter rough integration framework. In particular, we introduce a notion of rough integration over two-dimensional simplices at low regularity extending the results of \cite{cass2023fubini} and \cite{gerasimovivcs2019hormander}.

Within this setting, we show that the signature kernel equation arises naturally as a two-parameter rough differential equation and establish well-posedness and stability. We also extend the Schwinger--Dyson kernel equation, previously formulated for bounded-variation paths, to rough driving signals, proving existence and uniqueness in appropriate controlled rough path spaces.

In the smooth rough path regime, we relate the resulting equations to PDE and integro-differential formulations. Finally, we derive and analyse a numerical scheme for the rough Schwinger--Dyson equation, including runtime and memory complexity estimates, and illustrate its performance with numerical experiments.
\end{abstract}

\noindent\textbf{MSC 2020:}
Primary 60L20, 60L10, 60L90; secondary 60L70, 35R09, 65L20.

\medskip

\noindent\textbf{Keywords:}
rough paths; two-parameter rough differential equations; controlled rough paths;
signature kernels; Schwinger--Dyson equations; rough integration.

\section{Introduction}

The theory of rough paths provides a deterministic extension of classical differential equation theory, allowing one to give meaning to equations
\[
dZ = V(Z)\, dX, \qquad Z_0 = z,
\]
driven by highly irregular signals, well beyond the Young integration regime, by enriching the driving signal $X$ with additional algebraic structure.

In several recent developments, equations of an analogous type have appeared in genuinely two-parameter form. A first motivating example is the signature kernel equation, introduced in \cite{salvi2021signature} and used in learning problems for sequential data,
\begin{equation}\label{sig_ker_intro}
K(t,v) = 1 + \int_a^t \int_c^v K(s,u)\, \langle dX_s, dY_u \rangle.
\end{equation}
A second motivating example is the Schwinger--Dyson kernel equation, which arises in the study of kernels obtained as limits of random developments of a path in the unitary group \cite{cass2024free},
\begin{equation}\label{sd_ker_intro}
K(s,t) = 1 - \int_s^t \int_s^r K(s,u)K(u,r)\, \langle dX_u, dX_r \rangle,
\end{equation}
and has also been connected to questions in quantum field theory; see \cite{crew2025quantum}. 
In both cases, the driving signals are naturally too irregular to be treated by classical two-variable integration once one leaves the bounded-variation setting.
Beyond providing motivating examples of genuinely two-parameter equations, these kernels also arise in the construction of estimators for the maximum mean discrepancy between distributions on path space; see \cite[Sections 7--8]{chevyrev2022signature} and \cite{chevyrev2016characteristic}. This observation has been used in \cite{issa2023non} and, in the bounded-variation setting, in \cite{lou2023pcf} to show that such kernels can be employed to generate time-series samples from certain target distributions. Their study in the rough setting is therefore particularly relevant, since many driving signals of interest in applications are not of bounded variation but nevertheless admit a natural enhancement to a geometric rough path. Important examples include Brownian motion, more general continuous semimartingales, and sample paths of solutions to stochastic differential equations. The connection we establish between these kernels and two-parameter rough differential equations, and, under additional regularity assumptions, partial differential equations, provides a rough-path framework in which these kernel identities can be interpreted, analysed, and approximated numerically.
Our viewpoint is that \eqref{sig_ker_intro} and \eqref{sd_ker_intro} are instances of a broader class of two-parameter rough differential equations driven by a pair of rough paths $\mathbf{X}$ and $\mathbf{Y}$.
More precisely, we study equations of the form
\begin{equation}\label{skeleton_equation_intro}
Z_{t,v}
= z
+ \int_{\mathcal{D}_{t,v}}
\Theta^g\left(Z\right)_{s,u}\,
d(\mathbf{X}_s,\mathbf{Y}_u),
\end{equation}
posed either on rectangular or simplicial domain $\mathcal{D}_{t,v}$ on which the parameters mapping $\Theta^g$ depends.
We introduce a notion of solution to \eqref{skeleton_equation_intro} as a fixed point in suitable spaces of jointly controlled paths and establish well-posedness and stability estimates for our motivating examples, yielding in particular continuous dependence on the driving rough paths.
This structural well-posedness also leads naturally to numerical approximation schemes.

On the analytical side, our development relies on recent progress in two-parameter rough integration.
Two largely independent approaches have been proposed.
On the one hand, \cite{chouk2014rough} develops an integration theory for rough sheets when the driving signal has finite $p$-variation with $2\le p<3$.
On the other hand, \cite{gerasimovivcs2019hormander} introduces the class of two-parameter controlled paths in the same $p$-variation regime and, under a smooth approximation assumption, establishes a Fubini-type theorem for rough integration over rectangles and two-dimensional simplices.
In this work we adopt the controlled-path viewpoint, as subsequently extended in \cite{cass2023fubini} to a general $p$-variation setting, including the construction of a joint rough integral and a rough Fubini theorem on rectangles that does not rely on smooth approximations.
Motivated by the structure of the Schwinger--Dyson kernel equation, we supplement this theory with a notion of rough integration over simplices at low regularity, thereby bridging the gap between the approaches of \cite{cass2023fubini} and \cite{gerasimovivcs2019hormander}.
In addition, in order to make \eqref{skeleton_equation_intro} well defined, we show that the class of two-parameter controlled paths is stable under composition with smooth functions.

Additionally, when the driving signals are smooth rough paths, introduced in \cite{bellingeri2022smooth}, the two-parameter rough differential equation viewpoint connects naturally with partial differential equations.
In particular, interpreting \eqref{sig_ker_intro} within our framework recovers the PDE derived in \cite{lemercier2024log} in a straightforward manner, and the corresponding uniqueness and approximation statements for the kernel follow from the general two-parameter RDE theory developed here.

Similarly, when $\mathbf{X}$ is a smooth rough path, the Schwinger--Dyson kernel equation admits an equivalent reformulation at the level of an integro-differential equation.
More precisely, exploiting combinatorial identities, derived  from the Schwinger--Dyson equations, we show that is possible to connect the trace component of the kernel to the remaining components of the associated two-parameter controlled path leading to a system of integro-differential equation.
This provides an alternative analytic description of the kernel in the smooth regime and may serve as a starting point for further qualitative analysis.

These combinatorial identities also lead to a numerical scheme, for which we study runtime and memory complexity.
We conclude with numerical experiments illustrating the performance of the proposed approximation schemes: the experiments compare several variants of our method and show that their agreement deteriorates as the driving signal becomes less regular, in line with the theoretical picture.
We also report runtime studies that empirically validate the complexity estimates derived above.

\begin{comment}
\paragraph{Outline of this paper}
The paper is organized as follows. In Section 2, we provide a brief review of the theory of two-parameter controlled paths. Section 3 is dedicated to the construction of the two-parameter rough integral over two-dimensional simplices and its connection to the two-parameter rough integral introduced in \cite{cass2023fubini}. In Section 4, we study two-dimensional rough differential equations. We begin by analysing the composition of two-parameter controlled rough paths with smooth functions and formalize the notion of solution to a two-parameter rough differential equation. Using this framework, we establish uniqueness for the signature kernel equation \eqref{sig_ker_intro} and examine its connection to the corresponding PDE when the driving signals $\mathbf{X}$ and $\mathbf{Y}$ are smooth rough paths.

We then derive the rough version of the Schwinger–Dyson kernel equation and prove well-posedness for the associated two-parameter rough differential equation.

In the last  section a numerical scheme is presented for this equation, including numerical experiments and comparisons with the schemes introduced in \cite{cass2024free}.
Finally, we conclude by deriving an integro-differential equation satisfied by the Schwinger–Dyson kernel in the case where $\mathbf{X}$ is a smooth rough path.

\end{comment}

\paragraph{Outline of the paper}
The paper is organized as follows. In Section~2, we briefly review and relate the theories of one- and two-parameter controlled paths, with an emphasis on the structural analogies that underlie our later constructions. We also collect the main ingredients of two-parameter rough integration used throughout the paper, including the controlled-path formulation and the joint rough integral on rectangles.

In Section~3, motivated by the Schwinger--Dyson kernel equation in the rough regime, we introduce a notion of two-parameter rough integration over two-dimensional simplices at low regularity. We then show that this simplex integral is consistent with the joint rough integral on rectangles introduced in \cite{cass2023fubini}, thereby placing both constructions within a common framework.

Section~4 is devoted to two-parameter rough differential equations. We first prove that the class of two-parameter controlled paths is stable under composition with smooth functions, which is a fundamental ingredient in defining nonlinear equations of the form \eqref{skeleton_equation_intro}. We then formalize the notion of solution to a two-parameter rough differential equation as a fixed point in a suitable jointly controlled path space and develop the corresponding well-posedness theory.

We next apply this framework to our motivating examples. For the signature kernel equation \eqref{sig_ker_intro}, we show that it arises naturally as a two-parameter rough differential equation, and we establish well-posedness and stability, including continuous dependence on the driving rough paths. In the smooth rough path regime, we also recover its associated PDE formulation. We then derive the rough Schwinger--Dyson kernel equation and prove existence and uniqueness for the corresponding two-parameter rough differential equation. In the same smooth regime, we further derive an integro-differential reformulation of the Schwinger--Dyson kernel based on the combinatorial identities developed in the paper.

In the final section, we turn to numerical approximation. We present a numerical scheme for the rough Schwinger--Dyson equation, analyse its runtime and memory complexity, and report numerical experiments illustrating the performance of the method across different regularity regimes. We also include comparisons with the schemes introduced in \cite{cass2024free}, together with runtime experiments that support the theoretical complexity estimates.

\section{Preliminaries for two-parameter controlled paths}
We use this section to develop the theory of two-parameter (jointly controlled) rough paths.To make the definitions accessible it is helpful to keep in mind the analogy with classical, one-parameter rough path theory. The primary achievement of the latter theory is well-posedness and stability for a class of equations of the form
\[
Z_t = Z_0 + \int_{0}^t V(Z_s)\, d\mathbf{X}_s ,
\]
where $\mathbf{X}$ denotes a driving rough path, allowing one to go beyond classical integration theory, which would require bounded variation of the driving signal. Interpreting these equations requires a definition of the rough path $\mathbf{X}$, its properties, and the class of paths that can be integrated against it. The structure of the equation which describe the kernel trick for the signature kernel in the setting of continuous bounded variation paths 
\begin{equation}\label{sig_ker_intro__}
K(t,v) = 1 + \int_a^t \int_c^v K(s,u)\, \langle dX_s, dY_u \rangle,
\end{equation} 
and the Schwinger–Dyson kernel trick \eqref{sd_ker_intro} suggests that the extension of these concepts to the rough path setting is to view them as particular examples of two-parameter rough differential equations of the form
\begin{equation}\label{skeleton_equation}
Z_{t,v}
= z
+ \int_{\mathcal{D}_{t,v}}
\Theta^g\left(Z\right)_{s,u}
\, d(\mathbf{X}_s, \mathbf{Y}_u),
\end{equation}
where our two core examples are obtained from the following choices:
\[
\begin{aligned}
\text{Signature kernel:}\quad
&\begin{aligned}
\mathcal{D}_{t,v} &= [a,t]\times[c,v], \\
\Theta^g(Z)(s,u) &= g(Z)_{s,u}, \\
g(z_1,z_2)[\eta,\xi] &= z_1 \langle \eta,\xi\rangle ;
\end{aligned}
\\[8pt]
\text{Schwinger-Dyson  kernel:}\quad
&\begin{aligned}
\mathcal{D}_{t,v} &= \Delta_{[t,v]}, \\
\Theta^g(Z) &= g(Z_{t, \cdot},Z)_{s,u}, \\
g(z_1,z_2)[\eta,\xi] &= -z_1 z_2 \langle \eta,\xi\rangle .
\end{aligned}
\end{aligned}
\]

The aim of the next three sections is to develop a framework for two-parameter rough differential equations that is sufficiently robust to accommodate our core examples. We will show that both examples fit this framework in a precise sense. As a consequence, the general results of rough path theory become available in this setting, including stability and uniform estimates, and the structure of the solutions naturally leads to numerical approximation schemes.

Two-parameter controlled rough paths were first introduced in \cite{gerasimovivcs2019hormander} and further developed in \cite{cass2023fubini}. Building on this theory, we introduce the additional tools needed for our programme: two-parameter controlled paths, the joint rough integral, and a rough Fubini theorem.

For background on classical rough path theory we refer to \cite{cass2022combinatorial, friz2014course}, and for a detailed treatment of smooth rough paths to \cite{bellingeri2022smooth}. The results on two-parameter controlled paths used here are taken primarily from \cite{cass2023fubini}.

\subsection{Review of one-parameter rough path theory}
Let $U, V$, $W$, and $F$ be real Banach spaces, with $U, V$ and $W$ finite-dimensional.  
We denote by $\mathrm{Hom}(U,F)$ the space of bounded linear maps from $U$ to $F$ equipped with the operator norm, and by $C([a,b],F)$ the Banach space of continuous functions $[a,b] \to F$ equipped with the topology of uniform convergence.  

For a function $f:[a,b]\to F$, we write the increment between $s$ and $t$ (with $a \le s \le t \le b$) as
\[
f_{s,t} := f_t - f_s.
\]

Finally, we denote by $\Delta_{[a,b]}$ the 2-simplex
\[
\Delta_{[a,b]} := \{ (s,t) \in [a,b]^2 : s \le t \}.
\]

\begin{definition}[Control and $\omega$-$p$-variation]
A control on $[a,b]$ is a continuous, super-additive map
$\omega:\Delta_{[a,b]}\to[0,\infty)$ vanishing on the diagonal.

Given $p\ge 1$ and $f:\Delta_{[a,b]}\to F$ set
\[
\|f\|_{p,\omega}
:=
\inf\Big\{C>0:\ \|f_{s,t}\|_F\le C\,\omega(s,t)^{1/p}
\ \text{for all }(s,t)\in\Delta_{[a,b]}\Big\},
\]
with the convention $\inf\varnothing=\infty$, and define
\[
C_\omega^p(\Delta_{[a,b]},F)
:=
\{f:\Delta_{[a,b]}\to F:\ \|f\|_{p,\omega}<\infty\}.
\]

For a path $g:[a,b]\to F$, we write $g\in C_\omega^p([a,b],F)$ if its increment map
\(
(s,t)\mapsto g_{s,t}
\)
belongs to $C_\omega^p(\Delta_{[a,b]},F)$.
\end{definition}

When the control $\omega$ is clear from context, we may omit it from the notation. We also use $C^{0,p}_\omega([a,b],V)$ to denote the closure of $C^1_\omega([a,b],V)$ in $C^p_\omega([a,b],V)$.\newline  

Denote by $U^{\otimes k}$ the 
k-fold tensor product of 
$U$, with the convention that $U^{\otimes 0} = \mathbb{R}$. We endow this spaces with the projective tensor norm.
Let $T((U))$ be the tensor algebra over $V$ with unit $\mathbf{1}$ and let $T^{\leq N}(U)$ its truncation at level $N$, we endow this last space with the $\ell^1$ norm. Let $G^N(U)$ denote the step-$N$ nilpotent group over $U$ equipped with the Carnot–Carathéodory metric, and let $\mathcal{L}^N(U)$ denote the corresponding Lie algebra.

\begin{definition}[Geometric rough paths]
Let $p\ge 1$ and let $\omega$ be a control. The set of geometric $p$-rough paths
$\Omega G_{p,\omega}([a,b],V)$ consists of all maps
\(
\mathbf{X}\in C^{0,p}_\omega\!\left(\Delta_{[a,b]},G^{\lfloor p\rfloor}(V)\right)
\)
that are multiplicative, i.e.
\[
\mathbf{X}_{s,s}=\mathbf{1}
\quad\text{and}\quad
\mathbf{X}_{s,u}=\mathbf{X}_{s,t}\otimes \mathbf{X}_{t,u},
\qquad \text{for all } a\le s\le t\le u\le b.
\]
\end{definition}

A special subclass of geometric rough paths is given by the smooth rough paths. 
These correspond to rough paths whose coordinate functionals are smooth in time, 
so that rough integrals reduce to classical integrals. This class will allow us 
to identify the signature kernel equation with a PDE and the Schwinger–Dyson 
kernel equation with an integro–differential equation.

\begin{definition}[Smooth rough paths]
We say that a rough path $\mathbf{X} \in \Omega G_{p,\omega}
([a,b],V)$ is a smooth rough path if for every
$f \in \mathrm{Hom}(T^{\le \lfloor p \rfloor}(V), \mathbb{R})$, the function
\[
t \mapsto f(\mathbf{X}_{a,t})
\]
belongs to $C^{1}_{\omega}([a,b], \mathbb{R})$.
\end{definition}

The key property linking smooth rough paths with PDE-type equations is that they admit a time derivative along the diagonal. More precisely, by Proposition~2.6 in \cite{bellingeri2022smooth}, for a smooth rough path $\mathbf{X}$ there exists an $\mathcal{L}^{\lfloor p \rfloor}(V)$-valued path $\boldsymbol{x}$ such that for every $s \in [a,b]$
\[
\boldsymbol{x}_s = \partial_t\big|_{t=s}\, \mathbf{X}_{s,t}.
\]
We refer to $\boldsymbol{x}$ as the diagonal derivate of $\mathbf{X}$.

For each, let \(\pi_k : T((U)) \to U^{\otimes k}\) denote the canonical projection onto the \(k\)-th level of the tensor algebra, and write \(\eta^{(k)} := \pi_k(\eta)\) for \(\eta \in T((U))\).  
We also denote by \(\eta^{\le k} :=\pi_{\le k} (\eta)\) (resp $\eta^{\ge k} = \pi_{\ge k}(\eta)$) the projection onto the direct sum of levels up to \(k\) (resp. levels greater than $k$). We use a similar notation for $\eta^{>k}$ (resp. $\eta^{<k}$).\newline 
Fix a basis $(e_i)_{i=1}^d$ of $U$ and let $(e^i)_{i=1}^d$ be the dual basis of $U^*$.
For a word $I=(i_1,\dots,i_n)$ with $0\le n\le N$, set
\[
e_I := e_{i_1}\otimes\cdots\otimes e_{i_n}\in U^{\otimes n},
\qquad
e^I := e^{i_1}\otimes\cdots\otimes e^{i_n}\in (U^*)^{\otimes n}.
\]
Let $\pi_n:T^{\le N}(U)\to U^{\otimes n}$ denote the grade-$n$ projection.
For $\eta\in T^{\le N}(U)$ we define its $(I)$-coordinate by
\[
\eta^{(I)} := \langle e^I,\ \pi_n \eta\rangle
= (e^{i_1}\otimes\cdots\otimes e^{i_n})(\pi_n\eta),
\qquad |I|=n.
\]
(For the empty word $\emptyset$, set $\eta^{(\emptyset)}:=\pi_0\eta\in\mathbb{R}$.)

\begin{definition}[$\mathbf{X}$-controlled paths]
Let $p \in [1,\infty)$ with $\kappa = \lfloor p \rfloor$, and let $\mathbf{X} \in \Omega G_{p, \omega}([a,b], V)$.  

Let
\[
Z : [a,b] \to \mathrm{Hom}(T^{<\kappa}(V), F),
\]
and suppose there exists 
\[
R : \Delta_{[a,b]} \to \mathrm{Hom}(T^{<\kappa}(V), F)
\]
such that
\begin{equation}\label{1D_controlled}
Z_{t} = Z_{s} \circ L_{\mathbf{X}_{s,t}^{<\kappa}} + R_{s,t}, \quad a \le s \le t \le b,
\end{equation}
where, for every $\eta \in T^{< \kappa}(V)$, 
\(
L_\eta : T((V)) \to T((V))
\) is 
the left tensor multiplication operator, defined by
\(
L_\eta(\xi) := \eta \otimes \xi.
\) 
Then we say that $Z$ is a  $\mathbf{X}$-controlled path if there exists a finite constant $C$ such that for all $a \le s \le t \le b$, any $\eta_j \in V^{\otimes j}$, $j \in [0:\kappa)$,
\begin{equation} \label{regularuty_constraint_remainder_1d}
\| R_{s,t}[\eta_j] \| \le C \, \omega(s,t)^{(\kappa - j)/p} \, \| \eta_j \|.
\end{equation}
\end{definition}
Equipped with the norm 
\[
\|Z\|_{\mathcal{D}^p_{\mathbf{X}}} = \|Z_a\| + \sum_{j=0}^k\|R^{(j)}\|_{\frac{p}{\floor{p}-k}, \omega_{\mathbf{X}}},
\]
this space becomes is a Banach space, denoted by $\mathcal{D}^p_{\mathbf{X}}([a,b], F)$.\newline

In what follows we will refer to $Z^{(0)}$ as the trace process and $R$ as the remainder process.\newline

Having introduced rough paths and controlled paths, we now turn to the construction of the rough integral.  
Given a rough path $\mathbf{X}$ defined as above and a path $Z$ controlled by $\mathbf{X}$, the rough integral of $Z$ against $\mathbf{X}$ generalizes the classical Young integral to allow for low-regularity integrators.  

A central tool in the construction is the map
\[
\Omega^Z : \Delta_{[a,b]} \to F, 
\qquad 
\Omega^Z(s,t) := Z_s\big[\mathbf{X}_{s,t}\big],
\]
associated with \(Z \in \mathcal{D}^p_{\mathbf{X}}([a,b], \mathrm{Hom}(V,F))\). This map captures the local increment structure of the integral. In fact, the rough integral is defined as the limit over a sequence of partitions of $[a,b]$ whose mesh converges to \(0\):
\[
\int_a^b Z_s \, d\mathbf{X}_s
:= \lim_{|\mathcal P| \to 0} \sum_{s_i\in \mathcal P} \Omega^Z(s_i,s_{i+1}),
\]
where the limit exists by the rough sewing argument (presented in Section~2 of \cite{cass2022combinatorial}).
The rough integral itself can be lifted to a controlled path $I_{\mathbf{X}}(Z) \in \mathcal{D}^p_{\mathbf{X}}([a,b], F)$ with $I^{(0)}_{\mathbf{X}}(Z) = \int Z d\mathbf{X}$ and $I^{(k)}_{\mathbf{X}}(Z) = Z^{(k-1)}$. Here we implicitly use the canonical identification
\[
\mathrm{Hom}\left(V^{\otimes (k-1)},\mathrm{Hom}(V,F)\right)\cong \mathrm{Hom}\left(V^{\otimes k},F\right),
\qquad
f\mapsto \left[(v_1\otimes\cdots\otimes v_k)\mapsto f(v_1\otimes\cdots\otimes v_{k-1})(v_k)\right].
\]
This identification will be used throughout the remainder of the paper. 

\subsection{Review of two-parameter rough path theory}

From the one-parameter rough path theory reviewed above, it is clear that, in order to define a notion of two-parameter (or joint) rough integration suitable for our rough kernel equations, one must first introduce an appropriate class of two-parameter controlled rough paths.

Intuitively, a two-parameter controlled rough path $Z$ is jointly controlled by the rough paths $\mathbf{X}$ and $\mathbf{Y}$ if it is controlled in one parameter by $\mathbf{X}$ and in the other by $\mathbf{Y}$. The next definition, which adapts the one in \cite{cass2023fubini},
makes this intuition precise.
\begin{definition}
[Two-parameter $\left(  \mathbf{X,Y}\right)  $-controlled paths] Let $\omega_\mathbf{X}$, $\omega_\mathbf{Y}$ be controls defined on $[a,b]$ and $[c,d]$ respectively. Let  $p$ and $q$
be in $[1,\infty)$ with $\kappa=\left\lfloor p\right\rfloor $ and
$\lambda=\left\lfloor q\right\rfloor $ and assume that $\mathbf{X} \in \Omega G_{p, \omega_{\mathbf{X}}}([a,b], V)$, $\mathbf{Y} \in \Omega G_{q, \omega_{\mathbf{Y}}}([c,d], W)$. Denote by $\boxtimes_{\mathrm{ext}}$ the external tensor product, and write
$\boxtimes$ for $\boxtimes_{\mathrm{ext}}$ in the sequel.
Let
\[
Z:\left[  a,b\right]  \times\left[  c,d\right]  \rightarrow\mathrm{Hom}\left(
T^{<\kappa}\left(  V\right)  \boxtimes_{\mathrm{ext}} T^{<\lambda}\left(  W\right)
,F\right)
\]
and suppose
\begin{align*}
R^{1}  & :\Delta_{\left[  a,b\right]  }\times\left[  c,d\right]
\rightarrow \mathrm{Hom}\left(  T^{<\kappa}\left(  V\right)  \boxtimes_{\mathrm{ext}}
T^{<\lambda}\left(  W\right)  ,F\right)  \text{ and }\\
R^{2}  & :\left[  a,b\right]  \times\Delta_{\left[  c,d\right]  }%
\rightarrow \mathrm{Hom}\left(  T^{<\kappa}\left(  V\right)  \boxtimes_{\mathrm{ext}}
T^{<\lambda}\left(  W\right)  ,F\right)
\end{align*}
are defined respectively by the identities
\begin{align}
Z_{t,u}  & = Z_{s,u}\circ\left[  L_{\mathbf{X}_{s,t}^{<\kappa}}\boxtimes_{\mathrm{ext}}
\text{id}_{T^{<\lambda}\left(  W\right)  }\right]  +R_{s,t}^{1}\left(
u\right)  \text{ and}\nonumber\\
Z_{s,v}  & =Z_{s,u}\circ\left[  \text{id}_{T^{<\kappa}\left(  V\right)
}\boxtimes_{\mathrm{ext}} L_{\mathbf{Y}_{u,v}^{<\lambda}}\right]  +R_{u,v}^{2}\left(
s\right)  .\label{constraints}%
\end{align}
Then we say that $Z$ is a two-parameter $\left(  \mathbf{X,Y}\right)
$-controlled path if there exist finite constants $C_{1}$ and $C_{2}$ such
that for all $a\leq s\leq t\leq b$ and $c\leq u\leq v\leq d$ and any $\eta\in
T^{<\kappa}\left(  V\right)  $ and $\xi\in T^{<\lambda}\left(  W\right)  $ we
have
\begin{equation}\label{regularity_condition_r1}
\left \| R_{s,t}^{1}\left(  u\right)  \left[  \eta_{j}\boxtimes_{\mathrm{ext}}
\xi \right]  \right \| \leq C_{1}\omega_{\mathbf{X}}\left(
s,t\right)  ^{\left(  \kappa-j\right)  /p}\left \| \eta_{j}%
\right \| \| \xi \|
\text{, }
\end{equation}
for all $\eta_{j}\in V^{\otimes j}$, $j\in\lbrack0:\kappa)$ and if also
\[
\left\| R_{u,v}^{2}\left(  s\right)  \left[  \eta \boxtimes_{\mathrm{ext}}
\xi _{k}\right]  \right\| \leq C_{2}\omega_{\mathbf{Y}}\left(
u,v\right)  ^{\left(  \lambda-k\right)  /q} \left\| \eta \right\| \left\| \xi_{k}\right\|,
\]
for all $\xi_{k}\in W^{\otimes k}$, $k\in\lbrack0:\lambda).$
\end{definition}
The external tensor product
\(
T^{<\kappa}(V)\boxtimes T^{<\lambda}(W)
\)
is used mainly as a bookkeeping device, keeping the \(V\)- and \(W\)-directions distinct. This is particularly convenient since \(T^{<\kappa}(V)\) and \(T^{<\lambda}(W)\) already carry their own internal tensor-algebra structures. Moreover, after choosing bases in \(T^{<\kappa}(V)\) and \(T^{<\lambda}(W)\), every element admits a unique expansion in the induced tensor-product basis, so that components may be indexed separately in the two directions.\newline
In what follows, for any $\eta\in T((V))\boxtimes T((W))$, we write
\(
\eta^{(j,k)} := (\pi_j \boxtimes \pi_k)(\eta),
\)
where $\pi_j$ and $\pi_k$ denote the canonical projections defined above.

The notion of two-parameter rough paths was first introduced in \cite{gerasimovivcs2019hormander}
for the case $p\in\lbrack2,3).$ The paper \cite{cass2023fubini} extended it to the case
\thinspace$p\geq3$ and $\mathbf{X}$ possibly different than $\mathbf{Y}$. The definitions adopted there differ slightly from those
we use. In particular, we have broadened the notion to allow for the
controlling rough paths to be different in the respective parameters of $Z.$
An important difference is that we enforce symmetry of the partial derivative
processes; the following remark captures what this means.

\begin{remark}
Even in the case where these path are the same ($\mathbf{X=Y,}$ in the
definition above) the two notions differ. To illustrate this we write
out the conditions (\ref{constraints}) in full for $p\in\lbrack2,3).$ The
first can be expanded as%
\begin{align*}
Z_{t,u}^{\left(  0,0\right)  }  & =Z_{s,u}^{\left(  0,0\right)  }%
+Z_{s,u}^{\left(  1,0\right)  }\left[  \mathbf{X}^{(1)}_{s,t}\boxtimes id_{T^{<2}(V)} \right]
+R_{s,t}^{1;\left(  0,0\right)  }\left(  u\right) \\
Z_{t,u}^{\left(  1,0\right)  }  & =Z_{s,u}^{\left(  1,0\right)  }%
+R_{s,t}^{1;\left(  1,0\right)  }\left(  u\right)  ,\\
Z_{t,u}^{\left(  0,1\right)  }  & =Z_{s,u}^{\left(  0,1\right)  }%
+Z_{s,u}^{\left(  1,1\right)  }\circ\left[  L_{\mathbf{X}^{< 2}_{s,t}}\boxtimes id_{T^{<2}(V)} \right]  +R_{s,t}^{1;\left(  0,1\right)  }\left(  u\right)  ,
\end{align*}
while the second becomes
\begin{align*}
Z_{s,v}^{\left(  0,0\right)  }  & =Z_{s,u}^{\left(  0,0\right)  }%
+Z_{s,u}^{\left(  1,0\right)  }\left[id_{T^{<2}(V)} \boxtimes \mathbf{X}^{(1)}_{s,t}  \right]
+R_{u,v}^{2;\left(  0,0\right)  }\left(  s\right) \\
Z_{s,v}^{\left(  0,1\right)  }  & =Z_{s,u}^{\left(  0,1\right)  }%
+R_{u,v}^{2;\left(  0,1\right)  }\left(  s\right)  ,\\
Z_{s,v}^{\left(  1,0\right)  }  & =Z_{s,u}^{\left(  1,0\right)  }%
+Z_{s,u}^{\left(  1,1\right)  }\circ\left[  id_{T^{<2}(V)} \boxtimes L_{\mathbf{X}^{< 2}_{u,v}%
}\right]  +R_{s,t}^{2;\left(  0,1\right)  }\left(  u\right)  .
\end{align*}
Our definition therefore enforces that \(Z_{s,\cdot}^{(1,1)}\) appears in the expansion of \(Z_{s,\cdot}^{(1,0)}\) and, simultaneously, that \(Z_{\cdot,u}^{(1,1)}\) appears in the expansion of \(Z_{\cdot,u}^{(0,1)}\). In other words, \(Z^{(1,1)}\) is the same second-order term appearing in the iterated expansions, irrespective of the order in which increments are taken in the two parameters. In the framework of \cite{gerasimovivcs2019hormander} on the other hand, the order
of these two operations is distinguished with their commutativity
subsequently imposed as a condition for the rough Fubini theorems. As our
applications all rely on being able to use these theorems it is helpful
apriori to build it into the definition of a two-parameter controlled path.
This also allows for the succinct condition (\ref{constraints}) to be used and
allows a notationally lighter treatment of the low-regularity regime.
\end{remark}
\begin{comment}
To do this, we begin by equipping each tensor power $V^{\otimes j}$ and $W^{\otimes k}$ with a fixed admissible tensor norm, the truncated tensor algebras
\(
T^{<\kappa}(V) \,,\, 
T^{<\lambda}(W)
\)
inherit the corresponding direct-sum norms
\[
\|\eta\|_{T^{<\kappa}(V)} := \sum_{j=0}^{\kappa-1} \|\eta_j\|_{V^{\otimes j}}, \qquad
\|\xi\|_{T^{<\lambda}(W)} := \sum_{k=0}^{\lambda-1} \|\xi_k\|_{W^{\otimes k}}.
\]
With these choices, the processes $R^1$ and $R^2$, which we refer to as remainders or first-level remainders, naturally belong to the spaces
\[
R^1 \in C^p_{\omega_{\mathbf{X}}}\Big([a,b], C^0\big([c,d], \mathrm{Hom}(T^{<\kappa}(V)\boxtimes T^{<\lambda}(W), F)\big)\Big),
\]
\[
R^2 \in C^q_{\omega_{\mathbf{Y}}}\Big([c,d], C^0\big([a,b], \mathrm{Hom}(T^{<\kappa}(V)\boxtimes T^{<\lambda}(W), F)\big)\Big).
\]
\end{comment}
An immediate consequence of the definition above (Lemma 3.3 in \cite{cass2023fubini}) is that the processes $R^1$ and $R^2$ are themselves respectively $\mathbf{Y}$
and $\mathbf{X}$-controlled rough paths and that there exists a map 
\[
\mathbf{R}: \Delta_{[a,b]} \times \Delta_{[c,d]} \rightarrow \mathrm{Hom}\left(
T^{<\kappa}\left(  V\right)  \boxtimes T^{<\lambda}\left(  W\right)
,F\right) 
\]
such that 
\begin{align*}
\mathbf{R} \begin{pmatrix} s,t \\ u,v \end{pmatrix} &:= R^1_{s,t}(v) - R^1_{s,t}(u) \circ\left[ \text{id}_{T^{<\lambda}\left(  V\right)}  \boxtimes  L_{\mathbf{Y}_{u,v}^{<\kappa}
 }\right] \\
    &= R^2_{u,v}(t) - R^2_{u,v}(s) \circ\left[    L_{\mathbf{X}_{s,t}^{<\kappa}}\boxtimes  \text{id}_{T^{<\lambda}\left(  W\right)
 }\right]
\end{align*}
and for which there exists a constant $C$, such that for all $a \le s \le t \le b$ and $c \le u \le v \le d$ and any $\eta\in
T^{<\kappa}\left(  V\right)  $ and $\xi\in T^{<\lambda}\left(  W\right)  $ we
have
\begin{equation}\label{remainder_remainder_norm}
\left\| \mathbf{R} \begin{pmatrix} s,t \\ u,v \end{pmatrix} [\eta_j \boxtimes \xi_k] \right\| \leq C \omega_{\mathbf{X}}\left(
s,t\right)  ^{\left(  \kappa-j\right)  /p} \omega_{\mathbf{Y}}\left(
u,v\right)  ^{\left(  \lambda-k\right)  /q}\left \| \eta_{j}%
\right \| \| \xi_k \|,
\end{equation}

for all $\eta_{j}\in V^{\otimes j}$, $j\in\lbrack0:\kappa)$ and  $\xi_{k}\in W^{\otimes k}$, $k\in\lbrack0:\lambda).$
We refer to $\mathbf{R}$ as second level remainder or remainder of the remainder to highlight the dependence above. The estimate \eqref{remainder_remainder_norm} suggests that the level-wise second order remainders can be viewed as a two-parameter map with finite $p$-variation in the first parameter and finite $q$-variation in the second. This is captured by the following definition.  
\begin{definition}[Joint $(p, q)$-variation norm]
Let $\omega_1, \omega_2$ be a controls on $[a,b]$ and $[c,d]$ and let $p,q \ge 1$. For a map $f : \Delta_{[a,b]} \times \Delta_{[c,d]} \to F$, define
\[
\|f\|_{(p,q),(\omega_1, \omega_2)} := \inf \Big\{ C > 0 : \left \|f{\begin{pmatrix}
    s, t \\ u,  v
\end{pmatrix}}\right\|_F \le C \, \omega_1(s,t)^{1/p} \omega_2(u,v)^{1/q} \ \text{for all } a \le s \le t \le b\, , \, c \le u \le v \le d  \Big\},
\]
with the convention $\inf\varnothing=\infty$.
\end{definition}

Within this framework, we introduce the following norm on the space of two-parameter $(\mathbf{X},\mathbf{Y})$-controlled paths, defined by 
\[
\begin{aligned}
\|Z\|_{\mathcal{D}_{\mathbf{X}, \mathbf{Y}}}
:= \big| Z_{a,c}  \big| + \sum_{j = 0}^{\kappa -1} \sum_{k = 0}^{\lambda - 1} \Big[
&\;
  \big\| R^{1; (j,k)}(c)
       \big\|_{\frac{p}{\kappa - j}, \omega_\mathbf{X}} + \big\| R^{2; (j,k)}(a)   \big\|_{ \frac{q}{\lambda - k}, \omega_{\mathbf{Y}}}
 + \big\| \mathbf{R}^{(j,k)} \big\|_{(\frac{p}{\kappa - j},\frac{q}{\lambda - k}), (\omega_\mathbf{X}, \omega_\mathbf{Y})} 
\Big],
\end{aligned}
\]
Additionally, for fixed $\tilde{\mathbf{X}} \in  \Omega G_{p, \omega_{\mathbf{X}}}([a,b], V)$, $\tilde{\mathbf{Y}} \in  \Omega G_{q, \omega_{\mathbf{Y}}}([c,d], W)$ and a $(\tilde{\mathbf{X}}, \tilde{\mathbf{Y}})$-controlled path $\tilde{Z}$, it is possible to define the \enquote{distance}
\[
\begin{aligned}
d^{(\mathbf{X}, \mathbf{Y}), (\tilde{\mathbf{X}}, \tilde{\mathbf{Y}})}_{[a,b] \times [c,d]}(Z, \tilde{Z})
:= 
\;\big| Z_{a,c} - \tilde{Z}_{a,c} \big| + \sum_{j = 0}^{\kappa -1} &\sum_{k = 0}^{\lambda - 1} \bigg[  \big\| R^{Z, 1; (j,k)}(c)
      - R^{\tilde{Z}, 1; (j,k)}(c)
       \big\|_{\frac{p}{\kappa - j}, \omega_\mathbf{X}} \\
&\quad
 + \big\| R^{Z, 2; (j,k)}(a)
      - R^{\tilde{Z}, 2; (j,k)}(a)
      \big\|_{\frac{q}{\lambda - k}, \omega_\mathbf{Y}} \\
&\quad
 + \big\| \mathbf{R}^{Z, (j,k)} - \mathbf{R}^{\tilde{Z}, (j,k)} \big\|_{(\frac{p}{\kappa - j},\,\frac{q}{\lambda - k}), (\omega_\mathbf{X}, \omega_\mathbf{Y})}
\bigg].
\end{aligned}
\]
We remark that the above is not a true metric as $Z$ and $\tilde{Z}$ can possibly belong to different spaces. \newline
Just like we mentioned above, when the choice of controls is clear from the context, to improve readability, we will omit the control from the norm and remove the sub and superscripts from the \enquote{distance}.\newline
In the two-parameter setting, this norm necessarily involves the second-order remainder.
Taking increments in both parameters reveals cross-parameter dependence of the first-level remainders, which produces a second-order term.
Hence controlling the increments of the two-parameter integral will require to control of the second-order remainder.
The space of two-parameters controlled rough paths equipped with the norm will be denoted as $\mathcal{D}^{p, q}_{\mathbf{X}, \mathbf{Y}}([a,b] \times [c,d], F)$ throughout this work. Whenever $\mathbf{X} = \mathbf{Y}$ we will simplify the notation by writing $\mathcal{D}^p_{\mathbf{X}}([a,b] \times [a,b], F)$.\newline

As we anticipated before, this class of two-parameter paths is defined so that their interactions with the rough paths $\mathbf{X}$ and $\mathbf{Y}$ are sufficiently regular to allow one to define a rough integral over the rectangle $[a,b]\times[c,d]$. We recall that this joint integral is precisely the one that appears in the rough formulation of the signature-kernel trick. The construction of the integral, and the main properties we use, are summarised below from \cite{cass2023fubini}.\newline
To construct such an integral, one needs an analogue of the map $\Omega$ now familiar from the one-parameter setting. 
In this case, the role is played by the map  
\[
\Omega^Z: [a,b] \times [c,d] \to F, \qquad \Omega^Z \begin{pmatrix} s,t \\ u,v \end{pmatrix} := 
Z_{s,u} \big[ \mathbf{X}^{\geq 1}_{s,t} \boxtimes \mathbf{Y}^{\geq 1}_{u,v} \big],
\]  
which encodes how the integrand \(Z\) interacts with the structure of the driving rough paths \(\mathbf{X}\) and \(\mathbf{Y}\). 
Notice that if one of the increments $s,t$ or $u,v$ is fixed, we recover the classical, one-parameter, approximating map acting on either \(\mathbf{X}_{s,t}\) or \(\mathbf{Y}_{u,v}\).

Just like in the one-parameter setting, the next step is to define an approximation of the two-parameter rough integral. 
To this end, fix the two partitions:  
\[
\mathcal{P} = \{ a = s_1 \leq s_2 \leq \dots \leq s_n = b \}, 
\qquad  
\mathcal{D} = \{ c = u_1 \leq u_2 \leq \dots \leq u_m = d \},
\]  
and define the discrete two-parameter integral  
\begin{equation}\label{two_param_sewing}
\sum_{\mathcal{P}\times \mathcal{D}} \Omega^Z 
   := \sum_{i=1}^{n-1} \sum_{j=1}^{m-1} 
   \Omega^Z \begin{pmatrix} s_{i+1}, s_i \\ u_{j+1}, u_j \end{pmatrix}.
\end{equation}  
The double sum represents the accumulation of the elementary contributions of \(\Omega^Z\) over each rectangle \([s_i,s_{i+1}]\times[u_j,u_{j+1}]\). 

One of the main results in \cite{cass2023fubini} shows that, as the mesh of the rectangular partition tends to zero, these sums converge to a continuous and additive map on the rectangle $[a,b] \times [c,d]$, which we call two-parameter (or joint) rough integral
\[
\int_{[a,b] \times [c,d]} Z_{s, u} \, d(\mathbf{X}, \mathbf{Y})  
   :=  \lim_{\left|\mathcal{P} \times \mathcal{D}\right| \to 0} 
   \sum_{\mathcal{P}\times \mathcal{D}} \Omega^Z.
\]
Moreover, this integral depends continuously on the pair of rough paths and on the controlled integrand. For convenience, we record in Appendix~\ref{appendix_technical} the precise existence estimate and the corresponding stability bound in the notation used here; see Theorems~\ref{existence_2D_theorem} and~\ref{stability_theorem_2D}.
Additionally, the structure of the two-parameter controlled rough paths allows the the two-parameter rough integral to agree with the iterated one-parameter integrals. Concretely,
using the fact that two-parameter controlled paths are controlled in both parameters to define the maps 
\begin{align*}
&Z \in \mathcal{D}^q_{\mathbf{Y}}([c,d], \mathcal{D}^p_{\mathbf{X}}([a,b], F)) \to \int_a^\cdot Z_{s,u} \mathbf{Y}_u \in \mathcal{D}^p_{\mathbf{X}}([a,b], F) \text{ and } \\
&Z \in \mathcal{D}^p_{\mathbf{X}}([a,b], \mathcal{D}^q_{\mathbf{Y}}([c,d], F)) \to \int_c^\cdot Z_{s,u} \mathbf{X}_s \in \mathcal{D}^q_{\mathbf{Y}}([c,d], F)
\end{align*}
the following identity, derived in Theorem 5.2 in \cite{cass2023fubini}, holds:
\[
\int_{[a,b] \times [c,d]} Z_{s, u} \, d(\mathbf{X}_s, \mathbf{Y}_u)  
= \int_{a}^b \int_{c}^d Z_{s, u} \, d\mathbf{Y}_u \, d\mathbf{X}_s 
= \int_c^d \int_a^b Z_{s, u} \, d\mathbf{X}_s \, d\mathbf{Y}_u.
\]
This property, which is referred to as rough Fubini theorem, allows to define a controlled path  $I_{\mathbf{X}, \mathbf{Y}}(Z) \in \mathcal{D}^{p, q}_{\mathbf{X}, \mathbf{Y}}\big([a, b] \times [c,d],  F\big)$ such that  for $j \in [1:\kappa), k\in [1:\lambda)$ and $\eta \in V^{\otimes j}$, $\xi \in W^{\otimes k}$
\begin{equation}\label{definition_I_X_Y}
\begin{aligned}
    I_{\mathbf{X}, \mathbf{Y}}(Z)_{t,v}[\mathbf{1}, \mathbf{1}] &=  \int_{[a,t] \times [c, v]} Z_{s,u} d(\mathbf{X}_s, \mathbf{Y}_u),\\
    I_{\mathbf{X}, \mathbf{Y}}(Z)_{t,v} [\eta,  \mathbf{1}] &= \int_c^v Z_{t,u} d\mathbf{Y}_u [\eta],  \\
    I_{\mathbf{X}, \mathbf{Y}}(Z)_{t,v} [ \mathbf{1}, \xi] &= \int_a^t Z_{s,v} d\mathbf{X}_s [\xi],\\
    I_{\mathbf{X}, \mathbf{Y}}(Z)_{t,v} [\eta, \xi] &=  Z_{t,v} [\eta, \xi].
\end{aligned}
\end{equation}
That two-parameter rough integrals produce two-parameter controlled paths is consistent with the fact that, as we anticipated, solutions to two-parameter rough differential equations are themselves two-parameter controlled paths. This mirrors the familiar one-parameter theory, where rough integration maps controlled paths to controlled paths, and solutions of rough differential equations are (one-parameter) controlled paths.

\section{Rough integrals on two-dimensional simplices} \label{Simplex_section}
In the previous section, we introduced a notion of joint rough integration that makes sense of the integral appearing in the signature-kernel trick. However, this construction does not directly yield a rough integral over a triangular domain, as required for the Schwinger--Dyson kernel. The obstruction is that, in the map \eqref{two_param_sewing}, the partitions in the two directions are chosen independently, and therefore do not capture the geometry of a triangle.\newline
To carry out the programme outlined above, we need to extend the two-parameter integral from rectangles to two-dimensional simplices for general two-parameter controlled rough paths. In the setting of \cite{gerasimovivcs2019hormander}, a joint integral over a triangular domain is constructed for two-parameter controlled paths admitting smooth approximations and controlled by a geometric $p$-rough path with $p\in[2,3)$. Here we broaden this result to arbitrary $p\ge 1$ and to general two-parameter controlled paths, by leveraging a rough Fubini-type theorem.

More precisely, we first show that the order of integration over a simplex can be reversed. We then prove that, given any two-parameter controlled path $Z$, one can construct a controlled rough path $\widehat{Z}$ by symmetrising $Z$ along the diagonal. This symmetrised object allows us to relate integrals over triangular domains to integrals over rectangles.

Throughout this section we work with a \(p\)-geometric rough path \(\mathbf{X}\), controlled by \(\omega_{\mathbf{X}}\), and a two-parameter controlled path
\[
Z \in \mathcal{D}^{p}_{\mathbf{X}}\!\left( \Delta_{[a,b]} , \operatorname{Hom}(V \boxtimes V, F)\right).
\]
For notational convenience, set \(\kappa := \lfloor p\rfloor\).

Our first objective is to study the iterated rough integrals 
\begin{equation}\label{iterated_1_d_integrals}
\int_a^b \int_a^u Z_{s, u}\, d\mathbf{X}_s \, d\mathbf{X}_u,
\qquad 
\int_a^b \int_s^b Z_{s, u}\, d\mathbf{X}_u \, d\mathbf{X}_s, 
\end{equation}
with the aim of constructing the corresponding outer integrals. For this, it is necessary to ensure that the inner integrals can be made into 
one-parameter controlled rough paths. To this end, it is convenient to introduce the auxiliary maps
\begin{align*}
Z^L &: [a,b] \to 
\operatorname{Hom}\!\left(
T^{< \kappa}(V),
\operatorname{Hom}(V, F)
\right), \\
Z^U &: [a,b] \to 
\operatorname{Hom}\!\left(
T^{< \kappa}(V),
\operatorname{Hom}(V, F)
\right),
\end{align*}
defined for \(t \in [a,b]\) by
\begin{align*}
Z^L_t[\eta](\cdot) &= Z_{t,t} \left[\Delta \eta\ \otimes ( \mathbf{1} \boxtimes \cdot)\right], \\
Z^U_t[\eta](\cdot) &= Z_{t,t} \left[\Delta \eta\ \otimes (\cdot \boxtimes  \mathbf{1})\right].
\end{align*}
Here \(\Delta\) denotes the shuffle coproduct on \(T^{<\kappa}(V)\), i.e. the unique algebra morphism
\begin{align*}
&\Delta:T^{<\kappa}(V)\to T^{<\kappa}(V)\boxtimes T^{<\kappa}(V),\\
&\Delta( \mathbf{1})= \mathbf{1}\boxtimes  \mathbf{1},\quad \Delta(\eta)=\eta\boxtimes  \mathbf{1}+ \mathbf{1}\boxtimes \eta, \ \eta\in V,
\end{align*}
see Section 2 of \cite{cass2022combinatorial} for more details.
Equivalently, for every non-empty word \(I=i_1\cdots i_n\) over the alphabet \(\{1,\dots,d\}\),
we may define \(\Delta\) by
\begin{align*}
&\Delta(\emptyset) = \emptyset \boxtimes \emptyset,\\\
&\Delta(e_{I})
=
\prod_{m=1}^n \bigl(e_{i_m}\boxtimes \emptyset +\emptyset\boxtimes e_{i_m}\bigr).    
\end{align*}

The next proposition shows how these maps arise naturally when identifying the one-parameter controlled path above the inner integrals in \eqref{iterated_1_d_integrals}.

\begin{lemma}\label{proposition_L_U_controlled}
Let $\mathbf{X}$, $Z$, $Z^L$ and $Z^U$ be defined as above, then the following paths belong to the space $\mathcal{D}^p_{\mathbf{X}}([a,b], \operatorname{Hom}(V, F))$:
\[
L_s := \int_a^s Z_{u,s} \, d\mathbf{X}_u + Z^L_{s}, 
\qquad
U_u := \int_u^b Z_{u,s} \, d\mathbf{X}_s - Z^U_{u}.  
\]
\end{lemma}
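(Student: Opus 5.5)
We prove the statement for $L$; the case of $U$ is symmetric. In that case the upper limit is fixed and the lower limit moves, which produces the sign change in front of $Z^U$. The plan is to write down an explicit candidate for the controlled structure of $L$, namely a path $L:[a,b]\to\operatorname{Hom}(T^{<\kappa}(V),\operatorname{Hom}(V,F))$. We then compute its remainder and bound it level by level.

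The natural candidate, by analogy with the lift $I_{\mathbf{X}}$ of the one-parameter integral, is
\[
L_s[\eta](\cdot):=\Big(\int_a^s Z_{u,s}\,d\mathbf{X}_u\Big)[\eta\boxtimes\cdot\,]+Z^L_s[\eta](\cdot).
\]
Here the first term is the level-$\eta$ component of the $\mathbf{X}$-controlled path in the first variable obtained from $u\mapsto Z_{u,s}[\,\cdot\boxtimes\cdot\,]$ through $I_{\mathbf{X}}$. Its components are therefore the integral for $\eta=\mathbf{1}$ and $Z_{s,s}[\eta'\boxtimes\cdot]$ for $\eta=\eta'\otimes e_i$. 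The diagonal correction $Z^L_s$ collects all mixed terms produced by the shuffle coproduct.

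For $s\le t$ we split
\[
\int_a^t Z_{u,t}\,d\mathbf{X}_u-\int_a^s Z_{u,s}\,d\mathbf{X}_u=\int_a^s (Z_{u,t}-Z_{u,s})\,d\mathbf{X}_u+\int_s^t Z_{u,t}\,d\mathbf{X}_u .
\]
The first piece is handled with the second-variable expansion \eqref{constraints}, $Z_{u,t}=Z_{u,s}\circ[\mathrm{id}\boxtimes L_{\mathbf{X}_{s,t}}]+R^2_{s,t}(u)$. This gives $\int_a^s Z_{u,s}[\,\cdot\boxtimes \mathbf{X}_{s,t}\otimes\cdot\,]\,d\mathbf{X}_u$, which is the expected controlled term coming from the upper levels of $L_s$. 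The leftover is $\int_a^s R^2_{s,t}(u)\,d\mathbf{X}_u$. Since $R^2_{s,t}(\cdot)$ is $\mathbf{X}$-controlled (Lemma 3.3 of \cite{cass2023fubini}), with its own remainder being $\mathbf{R}$, the rough integral estimate shows that this leftover is $O(\omega_{\mathbf{X}}(s,t)^{(\kappa-k)/p})$ on level $k$. This uses \eqref{remainder_remainder_norm} together with the existence estimate of Theorem~\ref{existence_2D_theorem} (or its one-parameter analogue).

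The second piece, on the small square near the diagonal, is handled with the local sewing approximation. First we expand $Z_{u,t}$ around $(s,s)$ in both variables, and then we integrate the first variable over $[s,t]$. Up to terms of the correct order this gives $Z_{s,s}[\mathbf{X}_{s,t}^{\ge1}\boxtimes \mathbf{X}_{s,t}\otimes\cdot]$. Combining this with the increment of $Z^L$ requires a combinatorial identity. The identity we need is
\[
Z_{s,s}\big[\Delta\eta\otimes(\mathbf{X}_{s,t}\boxtimes\mathbf{X}_{s,t})\big]=Z_{s,s}\big[\Delta(\eta\otimes\mathbf{X}_{s,t})\big],
\]
which holds because $\mathbf{X}_{s,t}$ is group-like, so that $\Delta\mathbf{X}_{s,t}=\mathbf{X}_{s,t}\boxtimes\mathbf{X}_{s,t}$. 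With this identity, the level-$0$ terms on the rectangle together with the increment $Z^L_t-Z^L_s\circ L_{\mathbf{X}_{s,t}}$ reorganise exactly into $L_s\circ L_{\mathbf{X}_{s,t}}$ plus a remainder. In particular, the component of $\mathbf{X}_{s,t}^{\ge1}\boxtimes\mathbf{1}$ that is missing from the rectangle term is supplied by $Z^L$. Controlling $Z^L_t-Z^L_s\circ L_{\mathbf{X}_{s,t}}$ itself needs the diagonal expansion of $Z_{t,t}$. We obtain it by applying both expansions in \eqref{constraints} successively, $(s,s)\to(t,s)\to(t,t)$, which gives
\[
Z_{t,t}=Z_{s,s}\circ[L_{\mathbf{X}_{s,t}}\boxtimes L_{\mathbf{X}_{s,t}}]+R^1_{s,t}(s)\circ[\mathrm{id}\boxtimes L_{\mathbf{X}_{s,t}}]+R^2_{s,t}(t),
\]
where each remainder term has the correct order on each graded component.

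The main obstacle is the bookkeeping in this last step. We must check that, at every level $k<\kappa$, every term of order lower than $\omega(s,t)^{(\kappa-k)/p}$ cancels exactly once the shuffle-coproduct identity is applied, and that the diagonal piece $\int_s^t Z_{u,t}d\mathbf{X}_u$ minus its local approximation is of the right order, using the sewing bound on an interval of length controlled by $\omega(s,t)$. I would first verify the whole scheme for $p\in[2,3)$ by writing out the components explicitly, and then run the general case by induction on the level.
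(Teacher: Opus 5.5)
Your overall route matches the paper's. You expand the inner integral in the second variable, handle the near-diagonal piece by a local expansion at $(s,s)$, expand $Z^L_t$ through the diagonal expansion of $Z_{t,t}$, and recombine using that $\Delta$ is multiplicative and $\mathbf X_{s,t}$ is group-like. However, the candidate lift you write down is wrong, and identifying that lift is the actual content of the lemma.

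Your first term is the lift $I_{\mathbf X}$ in the upper limit. So for $\eta=\eta'\otimes e_i$ it equals $Z_{s,s}[\eta'\boxtimes\mathbf 1](e_i\boxtimes\cdot)$, i.e. $Z_{s,s}[\eta\boxtimes\,\cdot\,]$ under the canonical identification. That is exactly the summand of $Z^L_s[\eta]$ coming from the term $\eta\boxtimes\mathbf 1$ of $\Delta\eta$. Your candidate therefore counts the diagonal contribution twice and drops the second-variable Gubinelli derivatives of the inner integral. Already for $\kappa=2$ one has
\[
L_t[\mathbf 1](w)-L_s[\mathbf 1](w)=\int_a^s Z_{u,s}\big[\mathbf 1\boxtimes\mathbf X^{(1)}_{s,t}\big](\cdot\boxtimes w)\,d\mathbf X_u+Z^{(0,0)}_{s,s}\big(\mathbf X^{(1)}_{s,t}\boxtimes w\big)+O\big(\omega_{\mathbf X}(s,t)^{2/p}\big).
\]
Your candidate instead predicts the first-order term $2Z^{(0,0)}_{s,s}(\mathbf X^{(1)}_{s,t}\boxtimes w)$, so its level-$0$ remainder is only $O(\omega_{\mathbf X}(s,t)^{1/p})$. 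The lift that works is
\[
L_s[\eta]=\int_a^s Z_{u,s}[\,\cdot\boxtimes\eta\,]\,d\mathbf X_u+Z^L_s[\eta],
\]
with $\eta$ in the second slot. This is what the paper's final expansion $L_t=\int_a^s Z_{u,s}\circ[\mathrm{id}\boxtimes L_{\mathbf X^{<\kappa}_{s,t}}]\,d\mathbf X_u+Z^L_s\circ L_{\mathbf X^{<\kappa}_{s,t}}+R^L_{s,t}(t)$ encodes. Equivalently, $L_s[\eta]=F_{s,s}[\Delta\eta]$, where $F$ is the two-parameter lift of $(r,s)\mapsto\int_a^r Z_{u,s}\,d\mathbf X_u$ and $Z^L_s$ collects the summands of $\Delta\eta$ whose first factor has positive degree. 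You later call $\int_a^s Z_{u,s}[\,\cdot\boxtimes\mathbf X_{s,t}\otimes\cdot\,]\,d\mathbf X_u$ ``the expected controlled term coming from the upper levels of $L_s$''. That is true only for this corrected candidate, so the proposal is internally inconsistent as written.

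Two smaller fixes are needed.
\begin{enumerate}
\item Your route $(s,s)\to(t,s)\to(t,t)$ evaluates $Z_{t,s}$ with $t>s$, which lies outside $\Delta_{[a,b]}$. Go through $(s,t)$ instead. This gives $Z_{t,t}=Z_{s,s}\circ[L_{\mathbf X_{s,t}}\boxtimes L_{\mathbf X_{s,t}}]+R^2_{s,t}(s)\circ[L_{\mathbf X_{s,t}}\boxtimes\mathrm{id}]+R^1_{s,t}(t)$, as in the paper.
\item The identity you need is the left-multiplication one, $\Delta(\mathbf X_{s,t}\otimes\eta)=(\mathbf X_{s,t}\boxtimes\mathbf X_{s,t})\otimes\Delta\eta$, since the target is $Z^L_s\circ L_{\mathbf X_{s,t}}$. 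Because $Z_{s,s}$ kills summands whose first factor has degree $0$, the bookkeeping then reduces to
\[
(\pi_{\ge1}\boxtimes\mathrm{id})\big((\mathbf X_{s,t}\boxtimes\mathbf X_{s,t})\otimes\Delta\eta\big)=(\mathbf X_{s,t}\boxtimes\mathbf X_{s,t})\otimes(\pi_{\ge1}\boxtimes\mathrm{id})\Delta\eta+\mathbf X^{\ge1}_{s,t}\boxtimes(\mathbf X_{s,t}\otimes\eta).
\]
Here $Z^L_t$ supplies the first summand and your near-diagonal term $Z_{s,s}[\mathbf X^{\ge1}_{s,t}\boxtimes\mathbf X_{s,t}\otimes\cdot\,]$ supplies the second.
\end{enumerate}
On the positive side, your split $\int_a^s(Z_{u,t}-Z_{u,s})\,d\mathbf X_u+\int_s^t Z_{u,t}\,d\mathbf X_u$ stays inside the simplex. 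The paper's expansion of $\int_a^t Z_{u,t}\,d\mathbf X_u$ around $Z_{u,s}$ does not: it uses $Z_{u,s}$ for $u\in(s,t]$.
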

\begin{proof}
Fix $a \le s \le t \le b$. The first process admits the representation
\begin{equation} \label{LT_expansion}
L_t = \int_a^t Z_{u,t}\, d\mathbf{X}_u + Z_t^L .
\end{equation}
We begin by expanding the rough integral appearing on the right-hand side.
Using the decomposition of $Z_{u,t}$, we obtain
\begin{align*}
\int_a^t Z_{u,t}\, d\mathbf{X}_u
&= \int_a^t Z_{u,s} \circ
\left[ id_{T^{<\kappa}(V)} \boxtimes L_{\mathbf{X}_{s,t}^{<\kappa}} \right]
\, d\mathbf{X}_u
+ \int_a^t R^2_{s,t}(u)\, d\mathbf{X}_u \\
&= \int_s^t Z_{u,s} \circ
\left[ id_{T^{<\kappa}(V)} \boxtimes L_{\mathbf{X}_{s,t}^{<\kappa}} \right]
\, d\mathbf{X}_u
+ \int_a^s Z_{u,s} \circ
\left[ id_{T^{<\kappa}(V)} \boxtimes L_{\mathbf{X}_{s,t}^{<\kappa}} \right]
\, d\mathbf{X}_u  \\
&\quad + \int_a^t R^2_{s,t}(u)\, d\mathbf{X}_u .
\end{align*}
In the first integral of the last identity we expand $Z_{u,s}$ in the first parameter to get
\begin{align*}
\int_a^t Z_{u,t}\, d\mathbf{X}_u
&= Z_{s,s} \circ
\left[\mathbf{X}^{\ge 1}_{s,t} \boxtimes L_{\mathbf{X}_{s,t}^{<\kappa}}\right]
+ \int_s^t R^1_{s,u}(s) \circ
\left[ id_{T^{<\kappa}(V)} \boxtimes L_{\mathbf{X}_{s,t}^{<\kappa}} \right]
\, d\mathbf{X}_u \\
&\quad + \int_a^s Z_{u,s} \circ
\left[id_{T^{<\kappa}(V)} \boxtimes  L_{\mathbf{X}_{s,t}^{<\kappa} } \right]
\, d\mathbf{X}_u
+ \int_a^t R^2_{s,t}(u)\, d\mathbf{X}_u .
\end{align*}

Turning to the second term in \eqref{LT_expansion}, recall that
\begin{align*}
Z_t^L[\eta]
&= Z_{t,t}
\left[\Delta \eta \otimes (\mathbf{1} \boxtimes \cdot) \right] \\
&=
\left(
Z_{s,s} \circ
\left[ L_{\mathbf{X}_{s,t}^{<\kappa}} \boxtimes
L_{\mathbf{X}_{s,t}^{<\kappa}} \right]
+ R^2_{s,t}(s) \circ
\left[ L_{\mathbf{X}_{s,t}^{<\kappa}} \boxtimes id_{T^{<\kappa}(V)} \right]
+ R^1_{s,t}(t)
\right)
\left[\Delta \eta \otimes (\mathbf{1} \boxtimes \cdot) \right].
\end{align*}
Notice how $Z_{t,t}[\mathbf{1} \otimes \eta] =0$. This, combined with the expansions of the integral and $Z^L$, yields
\begin{align*}
L_t
= \int_a^s Z_{u,s} \circ
\left[ id_{T^{<\kappa}(V)} \boxtimes L_{\mathbf{X}_{s,t}^{<\kappa}} \right]
\, d\mathbf{X}_u
+ Z_{s}^L \circ
 L_{\mathbf{X}_{s,t}^{<\kappa}} 
+ R^L_{s,t}(t),
\end{align*}
where the remainder term $R^L$ is determined by the preceding expansions and is immediately seen to satisfy the regularity constraint \eqref{regularuty_constraint_remainder_1d}. This proves the claim for the first process.

The proof for $L^U$ follows by identical arguments and is therefore omitted.
\end{proof}

Having established that the two paths $L$ and $U$ belong to the same space, a natural next question is whether the corresponding rough integrals
\[
\int_a^b U_s \, d\mathbf{X}_s = \int_a^b \int_s^b Z_{s, u}\, d\mathbf{X}_u \, d\mathbf{X}_s
\quad \text{and} \quad
\int_a^b L_u \, d\mathbf{X}_u = \int_a^b \int_a^u Z_{s, u}\, d\mathbf{X}_s \, d\mathbf{X}_u
\]
coincide. In what follows, we show that this holds true.

To prepare for the proof, we introduce the \enquote{symmetrised} path 
\begin{align*}
&\widehat{Z}:\left[  a,b\right]  \times\left[  a,b\right]  \rightarrow\mathrm{Hom}\left(
T^{<\kappa}\left(  V\right)  \boxtimes T^{<\kappa}\left(  V\right)
,\mathrm{Hom}(V\boxtimes V, F)\right),\\ 
&\widehat{Z}_{s,t}\left[\eta, \xi\right] :=
\begin{cases}
Z_{s,t}\left[\eta, \xi\right], & \text{if } s \le t,\\
Z_{t,s}\left[\xi, \eta\right], & \text{if } s > t,
\end{cases}
\end{align*}
where, in the evaluation above, we use the canonical identification
\[
\mathrm{Hom}\left(
T^{<\kappa}\left(  V\right)  \boxtimes T^{<\kappa}\left(  V\right)
,\mathrm{Hom}(V\boxtimes V, F)\right)
\cong \mathrm{Hom}\left(
T^{\leq \kappa}\left(  V\right)  \boxtimes T^{\leq \kappa}\left(  V\right)
, F\right).
\] 
This construction symmetrises the controlled path $Z$ across the diagonal of
$[a,b]\times[a,b]$ and is instrumental in showing the relation  between the joint rough integral over the simplex and the joint rough integral over rectangular domains, in fact, in terms of $\hat{Z}$, the question above can be recast as
the validity of the following identity
\begin{equation}\label{identity_tringular_Fubini}
2 \int_a^b L_s \, d\mathbf{X}_s
= \int_{[a,b]\times[a,b]} \widehat{Z}_{s,u} \, d(\mathbf{X}_s,\mathbf{X}_u)
= 2 \int_a^b U_u \, d\mathbf{X}_u .
\end{equation}
The heuristic intuition behind the equality above 
comes from the classical Fubini-Tonelli theorem, for which, if $X$ is a path of bounded variation, and $Z$ is bounded and measurable, then 
\begin{align*}
&2 \int_a^b \int_s^b Z_{s,u} dX_u dX_s\\
&=  \int_a^b \int_s^b Z_{s,u} dX_u dX_s + \int_a^b \int_a^u Z_{s,u} dX_s dX_u \\
&= \int_a^b \int_a^b \left(Z_{s,u} \mathbbm{1}_{\{s \le u\}} + Z_{u,s} \mathbbm{1}_{\{s > u\}}\right) dX_u dX_s\\
&= \int_a^b \int_a^b \hat{Z}_{s,u} dX_u dX_s,  
\end{align*}
where in the third line we renamed the integration variables in the second $Z$.\newline
To address the rough case, we first need to verify that $\widehat{Z}$ belongs to the class
$\mathcal{D}^{p}_{\mathbf{X}}\!\left([a,b]\times[a,b],\, \mathrm{Hom}(V\boxtimes V, F)\right)$.
The next proposition ensures that this is the case.

\begin{proposition}
The process $ \hat{Z}$ belongs to the space  $\mathcal{D}^{p}_{\mathbf{X}}\left([a,b] \times [a,b], \mathrm{Hom}(V \boxtimes V, F)\right)$.
\end{proposition}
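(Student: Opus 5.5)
To show that $\widehat Z\in\mathcal{D}^{p}_{\mathbf{X}}([a,b]\times[a,b],\mathrm{Hom}(V\boxtimes V,F))$, I would define the remainders $R^{1,\widehat Z}$ and $R^{2,\widehat Z}$ through the identities \eqref{constraints} with $\mathbf{Y}=\mathbf{X}$. I would then verify the bound \eqref{regularity_condition_r1} (and its analogue for $R^2$) together with the second-level bound \eqref{remainder_remainder_norm}. By the symmetry $\widehat Z_{s,t}[\eta,\xi]=\widehat Z_{t,s}[\xi,\eta]$, the remainder $R^{2,\widehat Z}_{u,v}(s)$ equals $R^{1,\widehat Z}_{u,v}(s)$ with the two tensor slots swapped. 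So it suffices to treat $R^1$ and then $\mathbf{R}$.

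\textbf{First-level remainder.} Fix $u$ and $s\le t$. If $[s,t]$ lies entirely on one side of $u$ (both $\le u$ or both $\ge u$), then $R^{1,\widehat Z}_{s,t}(u)$ is literally $R^{1,Z}_{s,t}(u)$, or the swapped $R^{2,Z}_{s,t}(u)$. Hence the estimate is inherited with the same constant. The genuine case is $s\le u< t$, where the increment crosses the diagonal. There I would split through the diagonal point $(u,u)$:
\[
\widehat Z_{t,u}=\widehat Z_{u,u}\circ[L_{\mathbf{X}^{<\kappa}_{u,t}}\boxtimes \mathrm{id}]+R^{1,\widehat Z}_{u,t}(u),\qquad
\widehat Z_{u,u}=\widehat Z_{s,u}\circ[L_{\mathbf{X}^{<\kappa}_{s,u}}\boxtimes\mathrm{id}]+R^{1,\widehat Z}_{s,u}(u).
\]
Both pieces are of the one-sided type, because $(u,u)$ belongs to both the lower and upper regions and $\widehat Z_{u,u}[\eta,\xi]=Z_{u,u}[\xi,\eta]=Z_{u,u}[\eta,\xi]$. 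This diagonal symmetry of $Z_{u,u}$ has to be checked, or imposed: it is implicit in $\widehat Z$ being well defined on the diagonal, in the sense that both definitions agree there. Combining the two expansions with multiplicativity $\mathbf{X}_{s,t}=\mathbf{X}_{s,u}\otimes\mathbf{X}_{u,t}$ gives
\[
R^{1,\widehat Z}_{s,t}(u)=R^{1,\widehat Z}_{s,u}(u)\circ[L_{\mathbf{X}^{<\kappa}_{u,t}}\boxtimes\mathrm{id}]+R^{1,\widehat Z}_{u,t}(u).
\]
This is the standard Chen-type relation for remainders. Evaluated on $\eta_j\boxtimes\xi$, the first term involves $R^{(i)}_{s,u}$ for $i\le j$ multiplied by $\mathbf{X}^{(j-i)}_{u,t}$. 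Each such term is bounded by $\omega(s,u)^{(\kappa-i)/p}\omega(u,t)^{(j-i)/p}\le\omega(s,t)^{(\kappa-j)/p}$, using $\omega(s,u),\omega(u,t)\le\omega(s,t)$. This yields \eqref{regularity_condition_r1} with a constant depending only on $\kappa$, $\|\mathbf{X}\|$ and $\|Z\|_{\mathcal{D}}$.

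\textbf{Second-level remainder, the main obstacle.} The second-level remainder $\mathbf{R}^{\widehat Z}\binom{s,t}{u,v}$ requires a case analysis according to how the rectangle $[s,t]\times[u,v]$ meets the diagonal. If the rectangle lies in one triangle, the bound is inherited from $Z$. Otherwise I would use the two-parameter additivity of $\mathbf{R}$: it satisfies Chen-type relations in each pair of variables, namely
\[
\mathbf{R}\binom{s,t}{u,v}=\mathbf{R}\binom{s,r}{u,v}\circ[L_{\mathbf{X}_{r,t}}\boxtimes\mathrm{id}]+\mathbf{R}\binom{r,t}{u,v},
\]
and the analogue in $(u,v)$. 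These relations follow directly from the definition via $R^1$, $R^2$. Using them, I would cut the rectangle at the diagonal coordinates, $r=u$, $r=v$ in the first variable and $s$, $t$ in the second, into at most a fixed number of subrectangles. Each subrectangle lies in one closed triangle, except for squares $[r,r']\times[r,r']$ that straddle the diagonal. Off-diagonal pieces are controlled by $Z$'s own bound, and the factors from $\mathbf{X}$ are absorbed by superadditivity as in the previous step.

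The straddling square $[r,r']^2$ is the crux. There I would express $\mathbf{R}^{\widehat Z}\binom{r,r'}{r,r'}$ through $R^{1,\widehat Z}_{r,r'}(r')-R^{1,\widehat Z}_{r,r'}(r)\circ[\mathrm{id}\boxtimes L_{\mathbf{X}_{r,r'}}]$. Each first-level remainder here is already estimated by the first step. This alone gives only $\omega(r,r')^{(\kappa-j)/p}$ per term, but on a square both controls coincide, and the missing factor $\omega(r,r')^{(\kappa-k)/p}$ must come from expanding further through the diagonal point via the one-sided second-level remainders of $Z$ on the triangles $\{s\le u\}$ and $\{s\ge u\}$. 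I expect this to work because every term produced then carries total homogeneity $(2\kappa-j-k)/p$ in the single control $\omega(r,r')$, which is exactly the homogeneity required by \eqref{remainder_remainder_norm} on a square. Getting this bookkeeping right is where I anticipate most of the effort.
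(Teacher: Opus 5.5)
Your first-level argument is the paper's. The paper also splits into the cases $u\ge t$ and $s>u$, inheriting $R^{1}$, resp.\ the swapped $R^{2}$, of $Z$. It handles the crossing case $s\le u\le t$ by expanding through $(u,u)$. Your version is cleaner in two respects. You keep the composition $R^{1}_{s,u}(u)\circ[L_{\mathbf X^{<\kappa}_{u,t}}\boxtimes\mathrm{id}]$, which the paper drops. You also state the diagonal symmetry $Z_{u,u}[\eta,\xi]=Z_{u,u}[\xi,\eta]$, which the paper uses silently when it writes $\hat Z_{u,u}=Z_{u,u}$ after expanding $Z_{u,t}$ in its second variable. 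One minor slip: $L_{\mathbf X_{u,t}}$ raises the level, so on $\eta_j\boxtimes\xi$ you meet $R_{s,u}(u)$ at levels $i\ge j$ paired with $\mathbf X^{(i-j)}_{u,t}$; the exponents then sum to exactly $(\kappa-j)/p$. The paper stops at this point. Its definition of a two-parameter controlled path imposes only the first-level bounds, and \eqref{remainder_remainder_norm} is quoted in the preliminaries as a consequence of them.

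\textbf{The gap is your second-level step.} The straddling square cannot be handled as you hope, because the product bound genuinely fails there.

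\emph{Why no bookkeeping helps.} Take $\kappa=2$ at level $(1,1)$. Diagonal symmetry reduces the rectangular increment of $\hat Z$ over $[r,r']^2$ to $R^{Z,1}_{r,r'}(r')[\eta,\xi]-R^{Z,2}_{r,r'}(r)[\xi,\eta]$. These are two unrelated first-level remainders of $Z$. The second-level remainders of $Z$ only see rectangles inside one closed triangle, so nothing supplies the missing factor.

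\emph{A concrete counterexample.} Let $V=\mathbb R$, $p\in(2,3)$, and let $\mathbf X$ be the lift of a scalar path $x$. Let $Z$ be the jet of $\phi(x_s,x_u)$ with $\phi(a,b)=(a-b)^3$, that is, $(Z^{(0,0)},Z^{(1,0)},Z^{(0,1)},Z^{(1,1)})=(\phi,\partial_a\phi,\partial_b\phi,\partial_{ab}\phi)(x_s,x_u)$. Multiply by $\langle\cdot,\cdot\rangle$ if you want values in $\mathrm{Hom}(V\boxtimes V,\mathbb R)$.
\begin{itemize}
\item This $Z$ is the restriction of a path jointly controlled on all of $[a,b]^2$, with $\mathbf R^{Z,(1,1)}\equiv 0$.
\item It vanishes at every level on the diagonal. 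So it is diagonally symmetric, and $\hat Z$ satisfies the first-level bounds.
\item However, $\hat Z^{(1,1)}_{s,u}=6\,(x_{s\vee u}-x_{s\wedge u})$, hence
\[
\mathbf R^{\hat Z,(1,1)}\begin{pmatrix} r,r' \\ r,r' \end{pmatrix}
=\hat Z^{(1,1)}_{r',r'}-\hat Z^{(1,1)}_{r,r'}-\hat Z^{(1,1)}_{r',r}+\hat Z^{(1,1)}_{r,r}
=-12\,(x_{r'}-x_r).
\]
\item A bound $C\,\omega(r,r')^{2/p}$ for all $r<r'$ would force $x$ to have finite $p/2$-variation, which fails e.g.\ for Brownian paths.
\end{itemize}

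\emph{Consequence.} The homogeneity count in your last paragraph is not available. $\hat Z$ is controlled only in the first-level sense, which is all the paper's proof establishes. Its second-level remainder is only of first-level order on diagonal squares. In particular, \eqref{remainder_remainder_norm} does not follow from the first-level bounds of the paper's definition. That part of your plan cannot be completed.
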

\begin{proof}
Consider the points $s,t \in \Delta_{[a,b]}$, $u \in [a,b]$.
In the case $u \geq t \geq s$
\begin{align*}
\hat{Z}_{t,u} &= Z_{t,u}\\
&= Z_{s,u}\circ\left[  L_{\mathbf{X}_{s,t}^{<\kappa}}\boxtimes
\text{id}_{T^{<\kappa}\left(  W\right)  }\right]  +R_{s,t}^{1}\left(
u\right)\\
&= \hat{Z}_{s,u}\circ\left[  L_{\mathbf{X}_{s,t}^{<\kappa}}\boxtimes
\text{id}_{T^{<\kappa}\left(  W\right)  }\right]  +R_{s,t}^{1}\left(
u\right),
\end{align*}
if $t \geq s > u$
\begin{align*}
\hat{Z}_{t,u} &= Z_{u,t}\\
&= Z_{u,s}\circ\left[ 
\text{id}_{T^{<\kappa}\left(  W\right)  } \boxtimes L_{\mathbf{X}_{s,t}^{<\kappa}}\right]  +\hat{R}_{s,t}^{2}\left(
u\right)\\
&= \hat{Z}_{s,u}\circ\left[  L_{\mathbf{X}_{s,t}^{<\kappa}}\boxtimes
\text{id}_{T^{<\kappa}\left(  W\right)  }\right]  +\hat{R}_{s,t}^{2}\left(
u\right),
\end{align*}
where, for every $\eta, \xi \in T^{< \kappa}(V)$, $\hat{R}^2[\eta, \xi] = R^2[ \xi, \eta]$.\newline
Assume now that $t \geq  u \geq  s$, we can write 
\begin{align*}
\hat{Z}_{t,u} &= \hat{Z}_{u,u} \circ  \left[ L_{\mathbf{X}^{< \kappa}_{u,t}} \boxtimes id_{T^{< \kappa}(V)} \right] + \hat{R}^2_{u,t}(u) \\
&= Z_{u,u} \circ  \left[ L_{\mathbf{X}^{< \kappa}_{u,t}} \boxtimes id_{T^{< \kappa}(V)} \right] + \hat{R}^2_{u,t}(u) \\
&= Z_{s,u} \circ  \left[ L_{\mathbf{X}^{< \kappa}_{s,u}} \boxtimes id_{T^{< \kappa}(V)} \right] \circ  \left[ L_{\mathbf{X}^{< \kappa}_{u,t}} \boxtimes id_{T^{< \kappa}(V)} \right] + R^1_{s,u}(u) + \hat{R}^2_{u,t}(u)\\
&= \hat{Z}_{s,u} \circ  \left[ L_{\mathbf{X}^{< \kappa}_{s,u}} \boxtimes id_{T^{< \kappa}(V)} \right] \circ  \left[ L_{\mathbf{X}^{< \kappa}_{u,t}} \boxtimes id_{T^{< \kappa}(V)} \right] + R^1_{s,u}(u) + \hat{R}^2_{u,t}(u).
\end{align*}

These expansions motivate the definition \[
R^{\hat{Z}, 1}_{s,t}(u) := \left(\hat{R}^2_{u,t}(u) +  R^{1}_{s,u}(u)\right) \mathbbm{1}_{t \geq u \geq s} + \hat{R}^2_{s,t}(u)\mathbbm{1}_{t \geq s >u} + R^1_{s,t}(u)  \mathbbm{1}_{u > t \geq s}.\]
Notice that this remainder satisfies the required regularity condition, this is seen from the fact that every element in the sum above satisfies the bound \eqref{regularity_condition_r1}.\newline
For the increments in the other parameter, we can verify via a totally analogous computation that, for every $u,v \in \Delta_{[a,b]}$, $s \in [a,b]$,
\[
\hat{Z}_{s,v} =\hat{Z}_{s,u} \circ  \left[ id_{T^{< \kappa}(V)} \boxtimes L_{\mathbf{X}^{< \kappa}_{u,v}} \right]
 + R^{\hat{Z}, 2}_{u,v}(s). \]
 with 
 \[
 R^{\hat{Z}, 2}_{u,v}(s) = \left(\hat{R}^1_{s,v}(s) +  R^{2}_{u,s}(s)\right) \mathbbm{1}_{v \geq s \geq u} + \hat{R}^1_{u,v}(s)\mathbbm{1}_{v \geq u >s} + R^2_{u,v}(s)  \mathbbm{1}_{s > v \geq u},
 \]
 where $\eta, \xi \in T^{< \kappa}(V)$, $\hat{R}^1[\eta, \xi] = R^1[ \xi, \eta]$.\newline
 This concludes the claim.
\end{proof}

\begin{proposition}
    Let $U$, $L$ be defined as above, then the following identity holds
\[
\int_a^b L_s d\mathbf{X}_s = \int_a^b U_u d\mathbf{X}_u. 
\]  
\end{proposition}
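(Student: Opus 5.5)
The plan is to prove the identity \eqref{identity_tringular_Fubini}, which implies the statement: show that each of $2\int_a^b L_s\,d\mathbf{X}_s$ and $2\int_a^b U_u\,d\mathbf{X}_u$ equals the joint rough integral $\int_{[a,b]^2}\widehat Z_{s,u}\,d(\mathbf{X}_s,\mathbf{X}_u)$. By the preceding proposition, $\widehat Z\in\mathcal{D}^p_{\mathbf{X}}([a,b]\times[a,b],\mathrm{Hom}(V\boxtimes V,F))$. Hence the joint integral exists as the limit of the two-parameter Riemann sums $\sum_{\mathcal P\times\mathcal P}\Omega^{\widehat Z}$, and by the rough Fubini theorem (Theorem 5.2 of \cite{cass2023fubini}) it coincides with both iterated integrals. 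Since $\widehat Z$ is symmetric under swapping the parameters together with the two tensor slots, the two iterated integrals are also mirror images of each other. It therefore suffices to prove one equality, say $\int_{[a,b]^2}\widehat Z\,d(\mathbf{X},\mathbf{X})=2\int_a^b L_s\,d\mathbf{X}_s$; the $U$ case follows by the same argument with the roles of the two parameters exchanged.

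To compare the two integrals, I will use the same diagonal partition $\mathcal P=\{s_i\}$ in both directions. I split $\sum_{i,j}\Omega^{\widehat Z}\binom{s_i,s_{i+1}}{s_j,s_{j+1}}$ into three groups: the off-diagonal blocks with $j<i$, the off-diagonal blocks with $j>i$, and the diagonal blocks $i=j$. By the definition of $\widehat Z$ and the tensor-slot swap, the blocks with $j>i$ are the mirror images of those with $j<i$. Next I compare the one-parameter Riemann sum $\sum_i L_{s_i}[\mathbf{X}^{\ge1}_{s_i,s_{i+1}}]$ with the corresponding half of the joint sum. I expand $\int_a^{s_i}Z_{u,s_i}\,d\mathbf{X}_u=\sum_{j<i}\int_{s_j}^{s_{j+1}}$ and replace each inner piece by its local germ $Z_{s_j,s_i}[\mathbf{X}^{\ge1}_{s_j,s_{j+1}}\boxtimes\cdot\,]$. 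The errors are controlled by one-parameter sewing bounds involving $R^1$ and the second-level remainder $\mathbf{R}$ from \eqref{remainder_remainder_norm}. Summed over $i$, these errors are of order $\sum_{i,j}\omega(s_j,s_{j+1})^{\theta_1}\omega(s_i,s_{i+1})^{\theta_2}$ with $\theta_1,\theta_2>1/p$ and $\theta_1+\theta_2>1$, so they vanish as the mesh tends to zero. This is the same estimate used in the existence theorem recorded in Appendix~\ref{appendix_technical}.

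The main obstacle is the diagonal blocks. There $\Omega^{\widehat Z}\binom{s_i,s_{i+1}}{s_i,s_{i+1}}=Z_{s_i,s_i}[\mathbf{X}^{\ge1}_{s_i,s_{i+1}}\boxtimes\mathbf{X}^{\ge1}_{s_i,s_{i+1}}]$ is only of size $\omega(s_i,s_{i+1})^{2/p}$, which is not summable when $p\ge2$. These terms must be matched against the correction $Z^L_{s_i}[\mathbf{X}^{\ge1}_{s_i,s_{i+1}}]$ that appears in $2\sum_i L_{s_i}[\cdot]$. The key algebraic step is geometricity: since $\mathbf{X}$ is group-like, $\Delta\mathbf{X}_{s,t}=\mathbf{X}_{s,t}\boxtimes\mathbf{X}_{s,t}$. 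Using this, I will show that, up to terms of order $\omega(s_i,s_{i+1})^{(\kappa+1)/p}$, one has
\[
Z_{s_i,s_i}\big[\mathbf{X}^{\ge1}\boxtimes\mathbf{X}^{\ge1}\big]=2\,Z^L_{s_i}[\mathbf{X}^{\ge1}]-(\text{terms where one slot is }\mathbf{1}),
\]
with the diagonal terms of $\widehat Z$ split symmetrically between the two triangles. The terms where one slot is $\mathbf{1}$ should vanish by the convention $Z_{t,t}[\mathbf{1}\otimes\cdot]=0$ that was used in Lemma~\ref{proposition_L_U_controlled}.

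I expect the delicate part to be the bookkeeping of the coproduct. Specifically, $Z^L_t[\eta]$ evaluated at $\eta=\mathbf{X}^{<\kappa}_{s,t}$ with last letter in the $\mathbf{1}\boxtimes\cdot$ slot must reproduce exactly the shuffle decomposition of $\mathbf{X}^{\ge1}_{s,t}\boxtimes\mathbf{X}^{\ge1}_{s,t}$, with the last letter placed in the second slot. I would first check this for $p\in[2,3)$ by writing out levels $(1,1)$ and $(2,1)$ explicitly, and then extend it by induction on word length using the word formula for $\Delta$. Once the diagonal terms match, letting the mesh tend to zero gives $\int_{[a,b]^2}\widehat Z\,d(\mathbf{X},\mathbf{X})=2\int_a^b L\,d\mathbf{X}$. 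The symmetric argument gives the same identity for $U$, which proves the proposition.
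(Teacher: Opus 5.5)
Your route is the paper's: a diagonal partition $\mathcal P\times\mathcal P$, split into two off-diagonal triangles and diagonal blocks, with geometricity used on the diagonal. However, the key diagonal identity does not follow from geometricity. For a non-empty word $w=w'i$, the word formula gives $\Delta(e_{w'}\otimes e_i)=\Delta(e_{w'})\otimes(\mathbf 1\boxtimes e_i)+\Delta(e_{w'})\otimes(e_i\boxtimes\mathbf 1)$. By definition $Z^L$ sees exactly the first summand and $Z^U$ exactly the second. So $\Delta\mathbf X=\mathbf X\boxtimes\mathbf X$ only yields
\[
Z_{s,s}\big[\mathbf X^{\ge1}_{s,t}\boxtimes\mathbf X^{\ge1}_{s,t}\big]=Z^L_s\big[\mathbf X^{\ge1}_{s,t}\big]+Z^U_s\big[\mathbf X^{\ge1}_{s,t}\big]+O\big(\omega_{\mathbf X}(s,t)^{(\kappa+1)/p}\big).
\]
Your factor $2Z^L$ therefore needs $Z^U_s=Z^L_s$. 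By cocommutativity of $\Delta$ this holds when $Z_{s,s}\circ\tau=Z_{s,s}$, where $\tau$ swaps the two slots, and geometricity says nothing about that. For $\kappa=2$ the discrepancy is
\[
(Z^U_s-Z^L_s)[\mathbf X^{\ge1}_{s,t}]=\sum_{i,j}Z_{s,s}[e_i\boxtimes e_j]\big(\mathbf X^{(ji)}_{s,t}-\mathbf X^{(ij)}_{s,t}\big),
\]
a contraction against L\'evy area. Take the pure-area path $\mathbf X^{(1)}=0$, $\mathbf X^{(2)}_{s,t}=(t-s)(e_1\otimes e_2-e_2\otimes e_1)$, which is geometric for $p\in(2,3)$. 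Take $Z$ constant with $Z[e_1\boxtimes e_2]=1$ and all other components $0$. Then $\sum_i(Z^U_{s_i}-Z^L_{s_i})[\mathbf X^{\ge1}_{s_i,s_{i+1}}]=-2(b-a)$ on every partition. Here $\int_a^b L\,d\mathbf X=\int_a^b U\,d\mathbf X=b-a$, while all $\widehat Z$-Riemann sums vanish identically. Your planned level-$(1,1)$ check would expose exactly this.

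The missing ingredient is the diagonal symmetry $Z_{u,u}\circ\tau=Z_{u,u}$. It is precisely what makes $\widehat Z$ controlled across the diagonal; in the example above it fails and $\widehat Z$ jumps. Indeed, letting $t\downarrow u$ in $\widehat Z_{t,u}=Z_{u,t}\circ\tau=\widehat Z_{u,u}\circ[L_{\mathbf X^{<\kappa}_{u,t}}\boxtimes\mathrm{id}]+R^{\widehat Z,1}_{u,t}(u)$ forces it. The paper's proof that $\widehat Z$ is controlled uses this symmetry silently, so if you rely on that proposition you must extract the symmetry and invoke it explicitly.

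Alternatively, you can prove the stated identity without $\widehat Z$ or any symmetry by comparing $\sum_iL_{s_i}[\mathbf X^{\ge1}_{s_i,s_{i+1}}]$ with $\sum_iU_{s_i}[\mathbf X^{\ge1}_{s_i,s_{i+1}}]$ directly. Both off-diagonal parts approximate the same strict-triangle sum. The diagonal part of $L$ is $Z^L_{s_i}[\mathbf X^{\ge1}]$. Since $\int_{s_i}^b$ contains the block $[s_i,s_{i+1}]$, the diagonal part of $U$ is $Z_{s_i,s_i}[\mathbf X^{\ge1}\boxtimes\mathbf X^{\ge1}]-Z^U_{s_i}[\mathbf X^{\ge1}]$. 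By the first display this equals $Z^L_{s_i}[\mathbf X^{\ge1}]$ up to summable errors, using geometricity alone. This also shows the $U$ case is not the mirror image of the $L$ case, contrary to your claim.

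Finally, your error criterion is insufficient. With $\omega_j:=\omega_{\mathbf X}(s_j,s_{j+1})$, the sum $\sum_{i,j}\omega_j^{\theta_1}\omega_i^{\theta_2}$ factorises. On partitions into $n$ pieces of equal $\omega$-size it is of order $n^{2-\theta_1-\theta_2}$. So the naive bound $\omega_j^{(\kappa+1)/p}\omega_i^{1/p}$ (inner sewing error times $|\mathbf X^{\ge1}_{s_i,s_{i+1}}|$) does not tend to zero for $p\ge2$. You need that the inner sewing error is itself an $\mathbf X$-controlled path in the outer variable, with norm $O(\omega_j^{(\kappa+1)/p})$; this is where $\mathbf R$ enters. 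Then the sum over $i$ is a convergent Riemann sum.
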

\begin{proof}
To prove the claim, it suffices to show that the identity:
\begin{equation}
2 \int_a^b L_s \, d\mathbf{X}_s
= \int_{[a,b]\times[a,b]} \hat{Z}_{s,u} \, d(\mathbf{X}_s, \, \mathbf{X}_u)
=  2\int_a^b U_u \, d\mathbf{X}_u
\end{equation}
holds.

We begin by establishing the first equality in \eqref{identity_tringular_Fubini}. The argument relies on an approximation
procedure based on the map $\Omega^L$ associated with the rough integral
$\int_a^b L_s \, d\mathbf{X}_s$.

Concretely, fix a partition $\mathcal{P}=\{s_i\}_i$ of $[a,b]$ and define the corresponding map
\[
\Omega^L_{\mathcal{P}}
:= \sum_{s_i \in \mathcal{P}} L_{s_i}\, \mathbf{X}^{\geq 1}_{s_i,s_{i+1}} .
\]
Recalling that
\[
L_s = \int_a^s Z_{u,s}\, d\mathbf{X}_u
+ Z^L_{s},
\]
we may expand $\Omega^L_{\mathcal{P}}$ using the  representation of the integral
defining $L$. This yields
\begin{align*}
&\Omega^L_{\mathcal{P}} \\
&= \sum_{(s_i,u_{j+1})\in \mathcal{P}^2\cap \Delta_{[a,b]}}
Z_{u_j,s_i} \circ
\left[\mathbf{X}^{\geq 1}_{u_j,u_{j+1}} \boxtimes
\mathbf{X}^{\geq 1}_{s_i,s_{i+1}}\right]
+ Z_{u_j,s_i} \circ
[\pi_{\geq 1} \boxtimes \pi_{\geq 1}]\circ \Delta\mathbf{X}_{s_j, s_{j+1}}\,
\mathbbm{1}_{\{s_i=u_j\}} + r^\mathcal{P} \\
&= \sum_{(s_i,u_{j+1})\in \mathcal{P}^2\cap \Delta_{[a,b]}}
Z_{u_j,s_i} \circ
\left[\mathbf{X}^{\geq 1}_{u_j,u_{j+1}} \boxtimes
\mathbf{X}^{\geq 1}_{s_i,s_{i+1}}\right]
+ Z_{u_j,s_i} \circ
\left[\mathbf{X}^{\geq 1}_{u_j,u_{j+1}} \boxtimes
\mathbf{X}^{\geq 1}_{s_i,s_{i+1}}\right]
\,\mathbbm{1}_{\{s_i=u_j\}} + r^\mathcal{P}.
\end{align*}
Where the term $r^\mathcal{P}$ is arising from the approximation of the integral. This term goes to 0 as the partition mesh goes to 0.\newline
Symmetrising over $\mathcal{P}^2$, we may rewrite this as
\begin{align*}
\Omega^L_{\mathcal{P}}
= \frac12 \sum_{(s_i,u_{j+1})\in \mathcal{P}^2}
Z_{u_j,s_i} \circ
\left[\mathbf{X}^{\geq 1}_{u_j,u_{j+1}} \boxtimes
\mathbf{X}^{\geq 1}_{s_i,s_{i+1}}\right]
+ Z_{u_j,s_i} \circ
\left[\mathbf{X}^{\geq 1}_{u_j,u_{j+1}} \boxtimes
\mathbf{X}^{\geq 1}_{s_i,s_{i+1}}\right]
\,\mathbbm{1}_{\{s_i=u_j\}} + r^\mathcal{P}.
\end{align*}
The right-hand side, excluding the term $r^\mathcal{P}$, coincides precisely with the approximating map associated with the
double integral
\[
\int_{[a,b]\times[a,b]} \hat{Z}_{s,u}\,
d\left(\mathbf{X}_s, \mathbf{X}_u\right)
\]
computed over the rectangular partition $\mathcal{P}^2$.

Consequently, for any $\varepsilon>0$, there exists a sufficiently fine partition
$\mathcal{P}$ such that $|r^\mathcal{P}(s_i)|< \frac{\epsilon}{2}$ and
\[
\left|
\int_a^b L_s \, d\mathbf{X}_s
- \frac12 \int_{[a,b]\times[a,b]} \hat{Z}_{s,u} \,
d\left(\mathbf{X}_s, \mathbf{X}_u \right)
\right|
\leq
\left|
\int_a^b L_s \, d\mathbf{X}_s - \Omega^L_{\mathcal{P}}
\right|
+ \frac12
\left|
\Omega^L_{\mathcal{P}}
- \int_{[a,b]\times[a,b]} \hat{Z}_{s,u} \,
d\left(\mathbf{X}_s, \mathbf{X}_u \right)
\right|
< \frac{\varepsilon}{2}.
\]
This proves the first equality in \eqref{identity_tringular_Fubini}.

The proof of the second equality proceeds analogously. Introduce the map
\[
\Omega^U_{\mathcal{P}}
:= \sum_{u_i \in \mathcal{P}} U_{u_i}\,
\mathbf{X}^{\geq 1}_{u_i,u_{i+1}},
\]
where
\[
U_u = \int_u^b Z_{u,s}\, d\mathbf{X}_s
- Z^U_{u}.
\]
A computation entirely parallel to the one above shows that
\begin{align*}
\Omega^U_{\mathcal{P}}
=
\frac{1}{2} \sum_{(s_i,u_j)\in \mathcal{P}^2}
Z_{s_i,u_j} \circ
\left[\mathbf{X}^{\geq 1}_{s_i,s_{i+1}} \boxtimes
\mathbf{X}^{\geq 1}_{u_j,u_{j+1}}\right]
- Z_{s_i,u_j} \circ
\left[\mathbf{X}^{\geq 1}_{s_i,s_{i+1}} \boxtimes
\mathbf{X}^{\geq 1}_{u_{j+1},u_j}\right]
\,\mathbbm{1}_{\{s_i=u_j\}} + r^\mathcal{P}.
\end{align*}
The first term coincides with the map $\Omega^{\hat{Z}}$, associated to
\[
\frac{1}{2}\int_{[a,b]\times[a,b]} \hat{Z}_{s,u}\,
d\left(\mathbf{X}_s, \mathbf{X}_u \right),
\]
while the second and third term vanish in the limit as $|\mathcal{P}|\to 0$. For the second term, this may be seen, for instance, by the change of variables
$(s,u)\mapsto (s,u-s)$.
This concludes the proof.
\end{proof}

The proof of the previous proposition shows that the symmetrisation procedure is compatible with the two-parameter rough integral on the square $[a,b]$, and that it recovers the triangular integral up to the expected factor $2$. This motivates the following definition.

\begin{definition}[Two-parameter rough integral over the simplex]
Let $Z \in \mathcal{D}^{p}_{\mathbf{X}}\!\left(\Delta_{[a,b]},\,\mathrm{Hom}(V \boxtimes V, F)\right)$ and let $\widehat{Z}$ be its symmetrisation, defined as above. We define the two-parameter rough integral of $Z$ over the simplex $\Delta_{[a,b]}$ by
\begin{equation}\label{defining_identity_2p_simplex}
\int_{\Delta_{[a,b]}} Z_{s,u}\, d\left(\mathbf{X}_s,\mathbf{X}_u\right)
:= \frac{1}{2}\int_{[a,b]\times[a,b]} \widehat{Z}_{s,u}\, d\left(\mathbf{X}_s,\mathbf{X}_u\right).
\end{equation}
\end{definition}

\begin{remark}
The fact that the integral
\[
\int_{\Delta_{[a,b]}} Z_{s,u} \, d\left( \mathbf{X}_s, \mathbf{X}_u \right)
\]
satisfies the identity \eqref{defining_identity_2p_simplex} implies that it inherits all the continuity and stability properties of the two-parameter rough integral. Alternatively, these properties can be recovered by exploiting the identity \eqref{identity_tringular_Fubini}, proved in the proposition above. Another consequence of this definition is that the controlled path $I^\Delta_{\mathbf{X}}(Z)$ associated with the simplex integral can be obtained immediately by expressing the simplex integral as an integral over the square and then applying the construction presented in the previous section. As shown in the next section, this ensures that solutions to the Schwinger–Dyson kernel equation can be characterized as fixed points in the space of two-parameter controlled paths.
\end{remark}

To improve readability, whenever no ambiguity arises from the context, we write \(I_{\mathbf X, \mathbf X}(Z)\) in place of \(I_{\mathbf X}^{\Delta}(Z)\).

\begin{remark}
A natural
    question that may arise a the end of this section is whether the rough integral over a simplex can be extended to controlled
    rough paths $\tilde Z$ controlled by two distinct rough paths $\mathbf X$ and
    $\mathbf Y$, while still agreeing with the corresponding iterated rough integrals.
    In general, such an extension is not available. Indeed, in the iterated construction
    the objects $L$ and $U$ are the inner rough integrals. For the outer integrations
    to be well-defined, these inner integrals would have to be one-parameter controlled
    rough paths controlled by $\mathbf Y$ and $\mathbf X$, respectively. This need not
    be the case for general, unrelated rough paths $\mathbf X$ and $\mathbf Y$. Hence
    such an extension can only be expected under additional compatibility assumptions, for example the case where one between $\mathbf{X}$ and $\mathbf{Y}$ has bounded variation.
    \end{remark}
\section{Differential equation for jointly controlled paths} \label{composition and stability}
We begin this section by introducing the last ingredient needed to give a rigorous meaning to the class of equations described by \eqref{skeleton_equation}, which encompasses the signature kernel trick and the Schwinger-Dyson kernel equation. 
Indeed, the formulation of \eqref{skeleton_equation} involves joint rough integrals whose integrands are obtained by applying a smooth function $g$ to a two-parameter controlled path. 
Thus, before the equation can be interpreted, one must verify that this nonlinear transformation preserves the two-parameter controlled structure. 
This is exactly what we prove below: for any smooth function $g$ and two-parameter controlled rough path $Z$, it is possible to construct a two-parameter controlled rough path above $g(Z^{(0,0)})$.

Having established this stability under smooth composition, we then introduce a notion of solution to \eqref{skeleton_equation} within the class of jointly controlled paths.
In particular, we formulate \eqref{skeleton_equation} as a fixed-point problem in an appropriate controlled-path space and define solutions as jointly controlled paths satisfying the equation in this sense.

\begin{proposition}\label{stability_under_composition}
Let $S$ be a finite-dimensional Banach space. Let $\mathbf{X}\in \Omega G_p([a,b],V)$ and $\mathbf{Y}\in \Omega G_q([c,d],W)$, and set $\kappa=\lfloor p\rfloor$ and $\lambda=\lfloor q\rfloor$. Let $g\colon F\to S$ be Fr\'echet smooth, and let $Z$ be a two-parameter $(\mathbf{X},\mathbf{Y})$-controlled path.
Then the composition $g(Z)$ defines a two-parameter controlled path
\[
g(Z) \in \mathcal{D}^{p,q}_{\mathbf{X},\mathbf{Y}}([a,b]\times[c,d], S).
\]
Moreover, $g(Z)$ is characterized by the identity
\[
g(Z)_{t,v}[\eta,\xi] = g\left(Z^{(0,0)}_{t,v}\right)[\eta,\xi],
\qquad
\text{for all } \eta \in V^{\otimes 0},\ \xi \in W^{\otimes 0}.
\]

The higher level components of $g(Z)$ are given as follows. For
$\xi_j \in V^{\otimes j}$ and $\eta_k \in W^{\otimes k}$, define
\[
g(Z_{t,v})[\xi_j,\eta_k] :=
\begin{cases}
\displaystyle
\displaystyle
\sum_{m=1}^{\kappa-1} \frac{1}{m!}
D^m g\left(Z^{(0,0)}_{t,v}\right) \circ Z_{t,v}^{\otimes m}
\circ \left(\tilde{\Delta}^m \boxtimes \mathrm{id}_{T^{<\lambda(W)}}\right)
[\xi_j,\eta_k],
& j>0,\ k=0, \\[2ex]
\displaystyle
\sum_{n=1}^{\lambda-1} \frac{1}{n!}
D^n g\left(Z^{(0,0)}_{t,v}\right) \circ Z_{t,v}^{\otimes n}
\circ \left(\mathrm{id}_{T^{<\kappa(V)}} \boxtimes \tilde{\Delta}^n\right)
[\xi_j,\eta_k],
& j=0,\ k>0, \\[2ex]
\displaystyle
\sum_{\ell=1}^{\lambda+\kappa-2} \frac{1}{\ell!}
D^\ell g\left(Z^{(0,0)}_{t,v}\right) \circ Z_{t,v}^{\otimes \ell}
\circ \widetilde{(\Delta \boxtimes \Delta)}^{\ell}
[\xi_j,\eta_k],
& j>0,\ k>0 .
\end{cases}
\]
Here, $\Delta$ denotes the shuffle coproduct, $\tilde{\Delta}$
denotes the restriction $\pi_{\geq 1} \circ \Delta$ and $\Delta^m$ denotes the $m$-fold iterate of $\Delta$.
\end{proposition}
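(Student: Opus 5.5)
We plan to reduce the two-parameter statement to the classical one-parameter fact that controlled paths are stable under composition with smooth functions, applied separately in each parameter, and then to handle the genuinely two-parameter object, the second-level remainder $\mathbf{R}^{g(Z)}$, by a Taylor expansion in both directions at once.

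\textbf{Step 1: one-parameter reduction.} Fix $u\in[c,d]$. Regard $Z_{\cdot,u}$ as an $\mathbf{X}$-controlled path with values in $\mathrm{Hom}(T^{<\lambda}(W),F)$; the first identity in \eqref{constraints} is exactly \eqref{1D_controlled}. The one-parameter composition lemma gives an $\mathbf{X}$-controlled path above $g(Z^{(0,0)}_{\cdot,u})$ with Gubinelli derivatives $\sum_m\frac1{m!}D^mg(Z^{(0,0)})\circ Z^{\otimes m}\circ\tilde\Delta^m$. We prove it again here because we need explicit remainders. For $s\le t$, Taylor-expand
\[
g(Z^{(0,0)}_{t,u})=\sum_{m<\kappa}\tfrac1{m!}D^mg(Z^{(0,0)}_{s,u})\big[(Z^{(0,0)}_{t,u}-Z^{(0,0)}_{s,u})^{\otimes m}\big]+O\big(|Z^{(0,0)}_{t,u}-Z^{(0,0)}_{s,u}|^{\kappa}\big).
\]
Substitute $Z^{(0,0)}_{t,u}-Z^{(0,0)}_{s,u}=Z_{s,u}[\mathbf{X}^{\ge1}_{s,t}]+R^1_{s,t}(u)[\mathbf 1]$. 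Multiplicativity of $\mathbf{X}$ and the fact that $\mathbf{X}_{s,t}$ is group-like give
\[
(Z_{s,u}[\mathbf X^{\ge1}_{s,t}])^{\otimes m}=Z_{s,u}^{\otimes m}\circ\tilde\Delta^m[\mathbf X_{s,t}].
\]
This shuffle identity is what makes the coproduct appear, and it shows that the level-$j$ components of $g(Z)$ have the stated form. The remainder $R^{1,g(Z)}$ collects three kinds of terms: Taylor errors, cross terms involving $R^1$, and the truncation at level $\kappa$. Each satisfies \eqref{regularity_condition_r1} with $\omega_{\mathbf X}(s,t)^{(\kappa-j)/p}$, uniformly in $u$, because the constants depend only on $\sup|D^mg|$ over the compact range of $Z^{(0,0)}$ and on $\|Z\|_{\mathcal D_{\mathbf X,\mathbf Y}}$. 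The higher components $[\xi_j,\eta_k]$ are treated in the same way, differentiating the identity for the level-$(0,k)$ component. The same argument in the $\mathbf Y$-direction yields $R^{2,g(Z)}$.

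\textbf{Step 2: consistency of the two expansions.} We must check that the mixed components (case $j,k>0$) obtained by expanding first in $s$ and then in $u$ agree with those obtained in the reverse order. This is the symmetry built into the definition. Both orders give $D^\ell g$ contracted against $Z^{\otimes\ell}\circ\widetilde{(\Delta\boxtimes\Delta)}^\ell$, because $\Delta\boxtimes\Delta$ is the coproduct on $T(V)\boxtimes T(W)$ and its iterates are coassociative and cocommutative. We also verify that the terms with $\ell>\kappa+\lambda-2$ vanish after truncation.

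\textbf{Step 3: the second-level remainder (main obstacle).} We need
\[
\mathbf R^{g(Z)}\begin{pmatrix}s,t\\u,v\end{pmatrix}=R^{1,g(Z)}_{s,t}(v)-R^{1,g(Z)}_{s,t}(u)\circ[\mathrm{id}\boxtimes L_{\mathbf Y_{u,v}}]
\]
to obey \eqref{remainder_remainder_norm}, i.e. a bound with the product $\omega_{\mathbf X}(s,t)^{(\kappa-j)/p}\omega_{\mathbf Y}(u,v)^{(\lambda-k)/q}$. A single Taylor expansion only gives one factor. We therefore plan to write the Taylor remainders in integral form,
\[
\int_0^1\tfrac{(1-\theta)^{\kappa-1}}{(\kappa-1)!}D^{\kappa}g\big(Z^{(0,0)}_{s,\cdot}+\theta(Z^{(0,0)}_{t,\cdot}-Z^{(0,0)}_{s,\cdot})\big)[\cdot]\,d\theta,
\]
and to take the rectangular increment in the second variable. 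Using the discrete product rule $\Delta_{u,v}(AB)=(\Delta_{u,v}A)B_v+A_u\Delta_{u,v}B$, each factor is then either a $\mathbf Y$-increment of an $\mathbf X$-increment (controlled by $\mathbf R^Z$) or a $\mathbf Y$-increment of a function of $Z$ (controlled by $R^2$ together with smoothness of $g$, in the form of a mean-value bound for $D^\kappa g$).

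The delicate part is that the increment in $u$ must be taken relative to the re-expansion $\circ[\mathrm{id}\boxtimes L_{\mathbf Y_{u,v}}]$, not the plain difference. We will handle this by grouping terms level by level. It is perhaps easiest to first bound $R^{1,g(Z)}(u)$ as a $\mathbf Y$-controlled path, in the spirit of Lemma 3.3 of \cite{cass2023fubini}, using Step 1 applied to the $\mathbf Y$-controlled family $u\mapsto Z_{\cdot,u}$. The joint bound then follows. Finally, we combine the estimates and read off the characterization of $g(Z)$ at level $(0,0)$.
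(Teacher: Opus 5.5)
Your proposal is correct and follows essentially the paper's route. The appendix proof likewise expands in the first parameter, after splitting $\widetilde{(\Delta\boxtimes\Delta)}^{\ell}$ into $(\tilde\Delta\boxtimes\Delta)^{i}\otimes(\mathrm{id}\boxtimes\tilde\Delta)^{m}$ via the symmetry of $D^{\ell}g$ (this is where your Step~2 consistency check is absorbed), and obtains a remainder $R^{g,1}=A+B+C+E$ made of exactly your $R^1$ cross terms, Taylor remainders, $R^{1,(0,0)}$ corrections and shuffle-truncation errors; the joint bound on the second-level remainder is then proved in the proof of Proposition~\ref{stability_proposition} by your preferred option, showing each of $A,B,C,E$ is $\mathbf{Y}$-controlled through re-expansion in $v$ around $u$ and Taylor expansion of $D^{\ell}g$ around $Z^{(0,0)}_{t,u}$.
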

The previous result ensures that the term $g(Z)$ is again a two-parameter controlled path, so the joint integrals in \eqref{skeleton_equation} can be defined.  
For well-posedness our two examples, however, this qualitative closure is not enough:  one must control the size of $g(Z)$ in the controlled norm and compare $g(Z)$ and $g(\tilde Z)$ when either the input $Z$ or the driving rough paths are perturbed.  
The next proposition provides precisely these quantitative bounds, showing that composition defines a (locally) Lipschitz map with respect to the distance $d$.
\begin{proposition}\label{stability_proposition}
    Let $\mathbf{X}, \tilde{\mathbf{X}} \in \Omega G_{p, \omega_\mathbf{X}}([a,b], V)$, $\mathbf{Y}, \tilde{\mathbf{Y}} \in \Omega G_{q, \omega_\mathbf{Y}}([c,d], W)$ and  $Z \in \mathcal{D}^{p,q}_{\mathbf{X},\mathbf{Y}}$, $\tilde{Z} \in \mathcal{D}^{p,q}_{\tilde{\mathbf{X}},\tilde{\mathbf{Y}}}$. The composition of $Z$ with $g$ satisfies 
    \[
    \|g(Z)\|_{\mathcal{D}^{p,q}_{\mathbf{X}, \mathbf{Y}}} \leq C_{p,q,g,\omega_\mathbf{X}, \omega_\mathbf{Y}, Z} \|Z\|_{\mathcal{D}^{p,q}_{\mathbf{X}, \mathbf{Y}}},
    \]
    moreover the following stability estimate holds
    \[
    d(g(Z), g(\tilde{Z})) \leq C_{p,q,g,\omega_\mathbf{X}, \omega_\mathbf{Y}, Z, \tilde{Z}} d(Z, \tilde{Z}).
    \]

\end{proposition}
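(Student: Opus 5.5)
The plan is to write down explicit formulas for the three families of remainders of $g(Z)$, namely $R^{g(Z),1}$, $R^{g(Z),2}$ and $\mathbf{R}^{g(Z)}$, in terms of those of $Z$, and then bound each one level by level. This mirrors the one-parameter argument (e.g.\ Friz--Hairer / Cass--Turner) that composition with a smooth function is locally Lipschitz on $\mathcal{D}^p_{\mathbf{X}}$.

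\textbf{Step 1: first-level remainders.} Fix $u$ and view $s\mapsto Z_{s,u}$ as an $\mathbf{X}$-controlled path with values in $\mathrm{Hom}(T^{<\lambda}(W),F)$. By construction of Proposition~\ref{stability_under_composition}, $R^{g(Z),1}_{s,t}(u)$ is $g(Z)_{t,u}-g(Z)_{s,u}\circ[L_{\mathbf{X}^{<\kappa}_{s,t}}\boxtimes\mathrm{id}]$. I will Taylor-expand each $D^\ell g(Z^{(0,0)}_{t,u})$ around $Z^{(0,0)}_{s,u}$ to the order dictated by the degree $j$ of the argument. I then substitute $Z_{t,u}=Z_{s,u}\circ[L_{\mathbf{X}_{s,t}}\boxtimes\mathrm{id}]+R^1_{s,t}(u)$ and use the shuffle identity $\langle\Delta^m\,\cdot,\mathbf{X}_{s,t}^{\otimes m}\rangle$, which holds because $\mathbf{X}$ is geometric, to cancel all algebraic terms. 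What survives is a finite sum of multilinear expressions. Each is either a product containing at least one $R^1$ factor, or a Taylor remainder of order $\kappa-j$. Each is therefore bounded by
\[
C\,\omega_{\mathbf{X}}(s,t)^{(\kappa-j)/p}\,\|\eta_j\|\|\xi\|.
\]
The constant is a polynomial in $\sup|Z|$ and $\|Z\|_{\mathcal{D}}$, multiplied by $\sup_{|z|\le \|Z\|_\infty}\|D^{\le \kappa+\lambda}g(z)\|$. The term $R^2$ is handled symmetrically.

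\textbf{Step 2: the second-level remainder.} This is the main obstacle. I will compute $\mathbf{R}^{g(Z)}$ as $R^{g(Z),1}_{s,t}(v)-R^{g(Z),1}_{s,t}(u)\circ[\mathrm{id}\boxtimes L_{\mathbf{Y}_{u,v}}]$. Here each factor is expanded in the second parameter using $R^1_{s,t}(v)=R^1_{s,t}(u)\circ[\mathrm{id}\boxtimes L_{\mathbf{Y}_{u,v}}]+\mathbf{R}\binom{s,t}{u,v}$. The Taylor remainders are rectangular increments of $D^\ell g(Z^{(0,0)})$, so they are handled by a two-variable mean value formula: the mixed difference $f(Z_{t,v})-f(Z_{s,v})-f(Z_{t,u})+f(Z_{s,u})$ is written as an integral of $D^2 f$ against both increments plus $Df$ applied to the rectangular increment of $Z$. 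Each surviving term must carry an $\omega_{\mathbf{X}}^{(\kappa-j)/p}$ factor and an $\omega_{\mathbf{Y}}^{(\lambda-k)/q}$ factor. I expect this to be achieved by the following bookkeeping: pair every term with a product of a "$t$-remainder" and a "$v$-remainder", where either factor may be a genuine rectangular remainder $\mathbf{R}$. I will organise this by induction on $j+k$, using Lemma 3.3 of \cite{cass2023fubini}, which says $R^1$ is itself $\mathbf{Y}$-controlled and $R^2$ is $\mathbf{X}$-controlled. With this, the one-parameter composition lemma can be applied to $R^{g(Z),1}(\cdot)$ as a $\mathbf{Y}$-controlled path. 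This gives the first bound $\|g(Z)\|\le C\|Z\|$ with $C$ depending on $Z$ through its sup and norm. Here linear growth is obtained by writing $g(Z)-g(0)$, or simply by absorbing $|g(Z_{a,c})|$ into the constant.

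\textbf{Step 3: stability.} For $d(g(Z),g(\tilde Z))$, I will repeat the expansions and take the difference of the explicit remainder formulas term by term. Every term is a multilinear product of quantities in one of three classes: (i) $D^\ell g$ evaluated at $Z$ or $\tilde Z$; (ii) components or remainders of $Z$ or $\tilde Z$; (iii) increments of $\mathbf{X},\mathbf{Y}$ or of $\tilde{\mathbf X},\tilde{\mathbf Y}$. A telescoping identity $a_1\cdots a_n-\tilde a_1\cdots\tilde a_n=\sum_i \tilde a_1\cdots(a_i-\tilde a_i)\cdots a_n$ bounds each difference. Differences of $D^\ell g$ are controlled by $D^{\ell+1}g$ times $\sup|Z-\tilde Z|$. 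The sup is in turn bounded by $d(Z,\tilde Z)$ plus the rough path distance, via the controlled expansion from the corner $(a,c)$. Since the statement absorbs the dependence on the rough paths into the constant, and both rough paths share the controls $\omega_{\mathbf{X}},\omega_{\mathbf{Y}}$, the differences of increments of $\mathbf{X}$ and $\tilde{\mathbf{X}}$ are bounded through the controls. This yields $d(g(Z),g(\tilde Z))\le C\,d(Z,\tilde Z)$, with $C$ depending on the norms of $Z,\tilde Z$ and on $g$ over a ball containing both ranges.
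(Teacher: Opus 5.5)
Your Steps 1 and 2 are essentially the paper's argument. In the proof of Proposition~\ref{stability_under_composition}, the first-level remainder is written as $R^{g,1}=A+B+C+E$. These are, respectively, the terms containing $R^1$, the Taylor remainder of $g$, the terms containing $R^{1,(0,0)}$ from expanding $Z^{(0,0)}_{t,v}-Z^{(0,0)}_{s,v}$, and the truncation error of the shuffle expansion. Your list of surviving terms omits the last one, though it is of the right size. The proof of this proposition then shows that each of $A,B,C,E$ is $\mathbf Y$-controlled, by expanding in $v$ around $u$ and Taylor-expanding $D^\ell g$ around $Z^{(0,0)}_{t,u}$.

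One correction to Step 2: $R^{g(Z),1}_{s,t}(\cdot)$ is not a composition, so the one-parameter composition lemma does not apply to it as you say. What works is the term-by-term product expansion you also describe. In each term, one factor carries $\omega_{\mathbf X}(s,t)^{(\kappa-j)/p}$ through its own $\mathbf Y$-expansion: $R^1$ (via Lemma 3.3 of \cite{cass2023fubini}), an $(s,t)$-increment of $Z^{(0,0)}$, or the Taylor integral.

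The genuine gap is at the end of Step 3. Bounding $\mathbf X_{s,t}-\tilde{\mathbf X}_{s,t}$ ``through the controls'' only gives $|\mathbf X^{(n)}_{s,t}-\tilde{\mathbf X}^{(n)}_{s,t}|\lesssim\omega_{\mathbf X}(s,t)^{n/p}$. That is an additive $O(1)$ contribution, not a multiple of $d(Z,\tilde Z)$. The same happens to the rough-path part of your bound on $\sup|Z-\tilde Z|$.

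No constant can absorb this, because the inequality is false when $\tilde{\mathbf X}\neq\mathbf X$. Take $V=\mathbb R$, $p\in[2,3)$, and smooth $X\neq\tilde X$ with $X_a=\tilde X_a$. Let $Z^{(0,0)}_{t,v}=X_t-X_a$, $Z^{(1,0)}\equiv 1$, all other components zero, and define $\tilde Z$ likewise from $\tilde X$. Then all of $R^{1},R^{2},\mathbf R$ vanish for both paths and $Z_{a,c}=\tilde Z_{a,c}$, so $d(Z,\tilde Z)=0$. Yet for $g(z)=z^2$ one gets $g(Z)^{(1,0)}_{t,v}=2(X_t-X_a)$, hence $R^{g(Z),1;(1,0)}_{s,t}(c)=2X_{s,t}$ and $d(g(Z),g(\tilde Z))\ge 2\|X-\tilde X\|_{p,\omega_{\mathbf X}}>0$.

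If you keep the class (iii) differences instead of discarding them, your telescoping argument proves
\[
d\bigl(g(Z),g(\tilde Z)\bigr)\le C\bigl(d(Z,\tilde Z)+\|\mathbf X-\tilde{\mathbf X}\|_{p}+\|\mathbf Y-\tilde{\mathbf Y}\|_{q}\bigr).
\]
This reduces to the stated bound when $\tilde{\mathbf X}=\mathbf X$ and $\tilde{\mathbf Y}=\mathbf Y$, which is the case used in the uniqueness proofs. It also reduces to it when $g$ is linear, since then the remainders of $g(Z)$ are just $g$ applied to those of $Z$. The paper's own proof stops at the norm bound $\|\mathbf R^{F,(j,k)}\|\le K\|Z\|$ and never carries out the difference estimate. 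So this corrected form is what you should state and prove.
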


We postpone the proof of these results to the appendix in order to maintain the flow of the exposition.\newline

With these composition and stability estimates in hand, all terms appearing in \eqref{skeleton_equation} are now well-defined within the two-parameter controlled framework. In particular, we can now give a precise meaning to two-parameter Rough Differential Equations.

\begin{definition}[Solution to a two-parameter rough differential equation]
\label{solution_2d_RDE}
Let $\mathbf{X},\mathbf{Y}$ be as above. We consider equations of the form
\[
Z_{t,v}
=
z
+
\int_{\mathcal D_{t,v}}
\Gamma^{g}(Z)_{s,u}\,d(\mathbf X_s,\mathbf Y_u),
\qquad (t,v)\in [a,b]\times[c,d],
\]
where $Z_{a,c}\in F$, and where $(g,\mathcal D_{t,v},\Gamma)$ is required to be of one of the following two types.

\smallskip
\noindent{Rectangular type.}
\[
g:F\to \mathrm{Hom}(V\boxtimes W,F),
\qquad
\mathcal D_{t,v}:=[a,t]\times[c,v], \qquad 
\Gamma^{g}(Z)_{s,u}:=g(Z)_{s,u}.
\]

\smallskip
\noindent{Triangular type.}
\[
g:F\boxtimes F\to \mathrm{Hom}(V\boxtimes W,F),
\qquad
\mathcal D_{t,v}:=\Delta_{[t,v]},
\qquad
\Gamma^{g}(Z)_{s,u}:=g(Z_{t, \cdot},Z)_{s,u}.
\]

We say that the equation is solved by a controlled path
\[
Z\in \mathcal D^{p,q}_{\mathbf X,\mathbf Y}\big([a,b]\times[c,d],F\big)
\]
if
\[
Z= z +I_{\mathbf X,\mathbf Y}\big(\Gamma^{g}(Z)\big).
\]
\end{definition}

\begin{remark}
    From our construction of the rough integral on triangular domain, the choice of $\mathcal{D}_{t,v} = \Delta_{[t,v]}$  implicitly forces the identity $\mathbf{Y} = \mathbf{X}$ in the above definition. In the same setting it is worth pointing out that, for a two-parameter controlled  the path $Z$,  the path $(s,u) \to (Z_{t,s}, Z_{s,u})$ is  a two-parameter controlled path, therefore the integral in the definition is well-defined.
\end{remark}

We devote the remainder of the paper to the analysis of two examples of two-parameter RDEs, namely the signature kernel and the Schwinger–Dyson kernel equation, which illustrate the scope of the theory developed above.
\subsection{The signature kernel}
The first of the two applications we consider concerns the signature kernel, a kernel method for sequential data that uses the signature as a feature map. This approach was introduced in \cite{salvi2021signature} and has since been applied in a variety of settings; see, for example, \cite{lemercier2021distribution, toth2020bayesian, alden2025signature}.
In what follows, we briefly review the definition of the signature kernel and show how it can be related to the solution of a two-parameter rough differential equation.
We begin by recalling the notions of the signature and the associated signature kernel.
\newline 

Let $V=W$ be a finite-dimensional Hilbert space, and let $\mathbf{X}$ and $ \mathbf{Y} $ to be defined as in the previous sections.
Denote with $\Gamma$ the Gamma function and define
\[
\beta_p
:=
p^2\!\left(1+\sum_{i=3}^\infty
\left(\frac{2}{i-2}\right)^{\frac{\floor{p}+1}{p}}\right).
\]
Without loss of generality, from this point onwards, by replacing controls $\omega_{\mathbf{X}}$ and $ \omega_{\mathbf{Y}}$ if necessary, we may assume that 
\begin{align*}
&\left|\mathbf{X}^{(n)}_{s,t}\right| \leq \frac{\omega_{\mathbf{X}}(s,t)^{n/p}}{\beta_p \Gamma(n/p + 1)} \text{ for all } a \le s \le t \le b\text{ and } n \leq \kappa, \\
&\left|\mathbf{Y}^{(n)}_{u,v}\right| \leq \frac{\omega_{\mathbf{Y}}(u,v)^{n/q}}{\beta_q \Gamma(n/q + 1)} \text{for all } c \le u \le v \le d \text{ and } n \le \lambda.\\
\end{align*}

\begin{definition}
Let $\mathbf{X}$ be a geometric $p$-rough path on $V$, with $p \notin \mathbb{N}$.  
The signature of $\mathbf{X}$ is the unique extension of $\mathbf{X}$ to a path
\[
S(\mathbf{X}) : \Delta_{[a,b]} \to T((V)),
\]
such that $S^{(n)}(\mathbf{X}) = \mathbf{X}^{(n)}$ for every non-negative integer $n \leq \kappa$ and the following properties hold:
\begin{itemize}
\item
for every integer $n > \kappa$  the $n$-th level of the signature satisfies the estimate 
\[
\left|S^{(n)}(\mathbf{X})_{s,t}\right |
\le
\frac{\omega_{\mathbf{X}}(s,t)^{n/p}}{\beta_p\,\Gamma(n/p+1)} \quad \text{for all } a \le s \le t  \le b.
\]
\item
$S(\mathbf{X})$ is multiplicative, that is,
\[
S(\mathbf{X})_{s,s} = \mathbf{1}, \quad 
S(\mathbf{X})_{s,u} = S(\mathbf{X})_{s,t}\otimes S(\mathbf{X})_{t,u}
\quad \text{for all } a \le s \le t \le u \le b.
\]
\end{itemize}
\end{definition}

For a proof of this classical result in rough path theory we refer the reader to Theorem 3.7 in \cite{lyons2007differential}.\newline

It can be seen from the previous definition that the signatures \( S(\mathbf{X}) \) and \( S(\mathbf{Y}) \) take values in the Hilbert space
\[
F := \Big\{ \eta \in T((V)) : \|\eta\| < \infty \Big\},
\]
where
\[
\|\eta\| := \left( \sum_{k=0}^\infty \left\|\eta^{(k)}\right\|^2 \right)^{1/2},
\]
where each tensor power \(V^{\otimes k}\) is endowed with the Hilbert tensor norm induced by the fixed inner product on \(V\).

Within this framework, the signature kernel is defined as the map
\[
K^{\mathbf{X}, \mathbf{Y}} : [a,b] \times [c,d] \to \mathbb{R}, 
\qquad 
K^{\mathbf{X}, \mathbf{Y}}_{t,v} := \big\langle S(\mathbf{X})_{a,t},\, S(\mathbf{Y})_{c,v} \big\rangle,
\]
where \(\langle \cdot, \cdot \rangle\) denotes the induced inner product on $F$.

An important result linking the signature kernel to joint rough integrals appears in Proposition~6.3 of \cite{cass2023fubini}, where the signature kernel is obtained as a  two-parameter rough integral of the jointly controlled path
\begin{align*}
    & \Psi : [a,b] \times [c,d] \to \mathrm{Hom}\!\left(T^{<\kappa}(V) \boxtimes T^{<\lambda}(V), \mathrm{Hom}(V \boxtimes V, \mathbb{R})\right), \\
    & \Psi_{t,v}[\eta,\xi]
      = 
        \langle S(\mathbf{X})_{a,t} \otimes \eta \otimes \cdot,\,
                S(\mathbf{Y})_{c,v} \otimes \xi \otimes \cdot \rangle,
        \qquad \eta \in T^{<\kappa}(V), \xi \in T^{<\lambda}(V) ,
\end{align*}
via the identity
\begin{equation} \label{sig_pei_function}
K^{\mathbf{X},\mathbf{Y}}_{t,v}
=
1 + \int_{[a,t]\times[c,v]} \Psi_{s,u}\, d(\mathbf{X}_s,\mathbf{Y}_u).
\end{equation}
The identity above allows to construct a two-parameter controlled rough path above $K^{\mathbf{X}, \mathbf{Y}}$. In what follows, we use the same notation for the signature kernel viewed as a controlled path and for its trace, and clarify the intended meaning whenever necessary. \newline
Equipped with the results discussed earlier in this section, one can recognize the map $\Psi$ as arising from the composition of a smooth function with this controlled path.
To see this, consider the map
\begin{equation}\label{def_g_sig}
g : \mathbb{R} \to \mathrm{Hom}(V \boxtimes V, \mathbb{R}),
\qquad
g(z)[\eta,\xi] = z \langle \eta,\xi\rangle.
\end{equation}
When composed with the signature kernel, this yields the controlled path 
\begin{equation}\label{g_composed_sig}
\begin{aligned}
&g(K^{\mathbf{X},\mathbf{Y}}) \in \mathcal{D}^{p,q}_{\mathbf{X}, \mathbf{Y}}([a,b]\times [c,d], \mathrm{Hom}(V\boxtimes V, \mathbb{R})),\\
&g(K^{\mathbf{X},\mathbf{Y}})[\eta,\xi](\tilde{\eta},\tilde{\xi})
=
\langle S(\mathbf{X}) \otimes \eta \otimes \tilde{\eta},\,
        S(\mathbf{Y}) \otimes \xi \otimes \tilde{\xi} \rangle,
\qquad
\tilde{\eta},\tilde{\xi} \in V.
\end{aligned}
\end{equation}

In particular, this shows that the integrand $\Psi$ can be expressed as a smooth functional of the signature kernel understood as the controlled path constructed above the integral \eqref{sig_pei_function}, and therefore fits naturally within the framework of two-parameter controlled differential equations.

his naturally leads to the question of the well-posedness of the two-parameter RDE 
\[
Z_{t,v} = 1 +\int_{[a,t] \times [c, v]}   g(Z)_{s,u} d\left(\mathbf{X}_s, \mathbf{Y}_u\right),
\]
which we analyse in the next theorem.

\begin{theorem}\label{uniqueness_sig_kernel}
Let \(g\) be defined by \eqref{def_g_sig}. If \(Z\) is a solution to
\begin{equation}\label{signature_kernel_RDE}
Z_{t,v}
=
1+\int_{[a,t]\times[c,v]} g(Z)_{s,u}\,d(\mathbf{X}_s,\mathbf{Y}_u)
\end{equation}
in the sense of Definition \ref{solution_2d_RDE}, then
\(
Z = K^{\mathbf{X},\mathbf{Y}}.
\)
In particular, \(K^{\mathbf{X},\mathbf{Y}}\) is the unique solution to \eqref{signature_kernel_RDE}.

Moreover, if \(\tilde{\mathbf{X}} \in \Omega G_{p,\omega_{\mathbf{X}}}([a,b],V)\) and
\(\tilde{\mathbf{Y}} \in \Omega G_{q,\omega_{\mathbf{Y}}}([c,d],V)\), then
\[
\|K^{\mathbf{X},\mathbf{Y}}-K^{\tilde{\mathbf{X}},\tilde{\mathbf{Y}}}\|_{\infty}
\leq
C_{p,q,\omega_{\mathbf{X}},\omega_{\mathbf{Y}}}
\big(
\|\mathbf{X}-\tilde{\mathbf{X}}\|_{p}
+
\|\mathbf{Y}-\tilde{\mathbf{Y}}\|_{q}
\big).
\]
\end{theorem}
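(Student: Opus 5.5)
The plan has three parts: show that $K^{\mathbf X,\mathbf Y}$ is a solution, show that any solution equals it, and then prove the stability bound.

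\textbf{Existence.} By \eqref{sig_pei_function} and \eqref{g_composed_sig}, the integrand $\Psi$ coincides, as a controlled path, with $g(K^{\mathbf X,\mathbf Y})$. Here we view $K^{\mathbf X,\mathbf Y}$ as the controlled path $1+I_{\mathbf X,\mathbf Y}(\Psi)$, whose components are given by \eqref{definition_I_X_Y}. We check this componentwise from the formula in Proposition \ref{stability_under_composition}. Since $g$ is linear, only the $m=1$ terms survive, so $g(Z)[\eta,\xi]=Z[\eta,\xi]\langle\cdot,\cdot\rangle$. Using the rough Fubini identity, the components of $K$ are $\langle S(\mathbf X)_{a,t}\otimes\eta,S(\mathbf Y)_{c,v}\otimes\xi\rangle$. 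Hence $K=1+I_{\mathbf X,\mathbf Y}(g(K))$.

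\textbf{Uniqueness.} Let $Z$ be any solution and set $D=Z-K$. Since $g$ is linear, $D=I_{\mathbf X,\mathbf Y}(g(D))$ with $D_{a,c}=0$. We then run the standard small-domain contraction argument. Choose a small rectangle $[a,t_1]\times[c,v_1]$ on which $\omega_{\mathbf X}(a,t_1)$ and $\omega_{\mathbf Y}(c,v_1)$ are small. By the existence estimate (Theorem \ref{existence_2D_theorem}) applied to $I_{\mathbf X,\mathbf Y}$, the map $Z\mapsto I_{\mathbf X,\mathbf Y}(g(Z))$ has a Lipschitz constant on the restricted $\mathcal D^{p,q}$ norm that is at most a positive power of $\omega_{\mathbf X}(a,t_1)+\omega_{\mathbf Y}(c,v_1)$, times the norm of the linear map $g$. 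Note that $I_{\mathbf X,\mathbf Y}$ raises the level, so its output's remainders pick up extra powers of $\omega$. Therefore $\|D\|\le\tfrac12\|D\|$, and $D=0$ there.

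We then propagate across a grid of small rectangles. On each new cell $[t_i,t_{i+1}]\times[v_j,v_{j+1}]$, the boundary data of $D$ on the left and bottom edges vanish by the previous steps. By additivity of the joint integral, $D$ restricted to the cell again satisfies a homogeneous equation with zero boundary data, and the same contraction applies.

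I expect this step to be the main obstacle. The key point is making precise that restricting the equation to a subrectangle yields a Goursat-type problem. Its boundary terms are the one-parameter integrals $\int Z\,d\mathbf X$ and $\int Z\,d\mathbf Y$ along the edges, and these vanish because $D$ is zero on the already-treated cells. The alternative route avoids gridding: we would iterate $D=I^{n}(g^{\otimes}(D))$ and use factorial decay of $n$-fold joint integrals, mimicking the $\beta_p\Gamma(n/p+1)$ bounds. I would try the grid argument first.

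\textbf{Stability.} We use the explicit representation $K^{\mathbf X,\mathbf Y}_{t,v}=\sum_n\langle S^{(n)}(\mathbf X)_{a,t},S^{(n)}(\mathbf Y)_{c,v}\rangle$. The extension theorem (Lyons, Theorem 3.7) gives that the signature extension is locally Lipschitz, with the bound $\|S^{(n)}(\mathbf X)-S^{(n)}(\tilde{\mathbf X})\|\le C\,\|\mathbf X-\tilde{\mathbf X}\|_p\,\omega^{n/p}/(\beta_p\Gamma(n/p+1))$. We write the difference telescopically as $\langle S(\mathbf X)-S(\tilde{\mathbf X}),S(\mathbf Y)\rangle+\langle S(\tilde{\mathbf X}),S(\mathbf Y)-S(\tilde{\mathbf Y})\rangle$. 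Applying Cauchy--Schwarz levelwise and summing the factorially decaying series yields the stated bound in $\|\cdot\|_\infty$.

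Alternatively, the stability could be obtained from Proposition \ref{stability_proposition} combined with Theorem \ref{stability_theorem_2D} and the same gridding argument. The signature route is cleaner, so I would use it.
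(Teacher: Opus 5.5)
Your proposal is correct. For existence and uniqueness it follows the paper; for stability it takes a genuinely different route.

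\textbf{Existence and uniqueness.} This is essentially the paper's argument: a local contraction estimate for the difference of two fixed points on small rectangles, then extension cell by cell in each coordinate direction. Reducing by linearity to $D=I_{\mathbf X,\mathbf Y}(g(D))$ with $D_{a,c}=0$ is cleaner than the paper's ball of radius $M$. It also makes the paper's remark that ``$\delta$ can be chosen uniformly'' transparent, since the contraction constant no longer depends on the sizes of the two solutions.

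One point should be stated precisely. The Lipschitz bound with a small factor holds only on paths vanishing at the corner, not for $Z\mapsto I_{\mathbf X,\mathbf Y}(g(Z))$ in general. The level-$(0,0)$ remainders contain boundary terms such as $g(D)_{s,u}[\mathbf X^{(\kappa)}_{s,t}\boxtimes\mathbf Y^{\ge 1}_{u,v}]$ (the paper's $E$, $F$ terms). These are of exact order $\omega_{\mathbf X}(s,t)^{\kappa/p}$ times $\|D\|_\infty$ and carry no extra power of $\omega$. The paper creates slack for them by working in $\mathcal D^{p',q'}$ with $p'>p$, $q'>q$. You get it from $D_{a,c}=0$, which gives
\[
\|D\|_\infty\lesssim\bigl(\omega_{\mathbf X}(a,b)^{1/p}+\omega_{\mathbf Y}(c,d)^{1/q}\bigr)\|D\|_{\mathcal D_{\mathbf X,\mathbf Y}}.
\]
These terms are controlled by one-parameter sewing, not by Theorem \ref{existence_2D_theorem} alone.

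\textbf{Stability.} The paper works at the level of the equation. It combines stability of joint integrals (Theorem \ref{stability_theorem_2D}) with stability of composition (Proposition \ref{stability_proposition}) to bound $\Gamma(Z)-\Gamma(\tilde Z)$ on each cell of a fine grid. It then propagates the bound by progressive substitution.

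You bypass the equation. You use $K=\sum_n\langle S^{(n)}(\mathbf X),S^{(n)}(\mathbf Y)\rangle$ and the local Lipschitz continuity of Lyons' extension map, possibly after adjusting the normalisation of $\omega$. This gives level-wise factorially decaying bounds that you sum after Cauchy--Schwarz.

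What each route buys:
\begin{itemize}
\item Your route is shorter, rests on a standard theorem, and yields an explicit constant. But it depends entirely on the closed-form series of the signature kernel.
\item The paper's route is a mechanism internal to the two-parameter RDE framework. It is the one that would carry over to a nonlinear $g$ where no explicit representation of the solution is available.
\end{itemize}
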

\begin{proof}
The proof of the first claim is based on a local uniqueness estimate for the difference of two fixed points. To this end, fix $\tilde a \in [a,b)$ and
$\tilde c \in [c,d)$, and let
\[
\delta \le \min\{\, b-\tilde a,\; d-\tilde c \,\}.
\]
Additionally consider the values $p' \in (p, \kappa + 1)$, $q' \in (q, \lambda+1)$.\newline 
Define the map
\[
\Gamma^{\tilde a,\tilde c} :
\mathcal{D}^{p',q'}_{\mathbf{X},\mathbf{Y}}
\big([\tilde a,\tilde a+\delta]\times[\tilde c,\tilde c+\delta],\mathbb{R}\big)
\;\longrightarrow\;
\mathcal{D}^{p',q'}_{\mathbf{X},\mathbf{Y}}
\big([\tilde a,\tilde a+\delta]\times[\tilde c,\tilde c+\delta],\mathbb{R}\big)
\]
to be the controlled path constructed above the integral 
\[
z + \int_{[\tilde a,t]\times[\tilde c,v]}
g\!\left(Z\right)_{s,u}\, d(\mathbf{X}_s,\mathbf{Y}_u),
\]
according to the definition \eqref{definition_I_X_Y}. In the above the value $z \in \mathbb{R}$ is fixed.
Let $\tilde{K}$ be another fixed point to the map $\Gamma^{a, c}$ that belongs to the ball
\[
B = \left \{ Z \in \mathcal{D}^{p', q'}_{\mathbf{X}, \mathbf{Y}}([a,a+\delta] \times [c, c+\delta], \mathbb{R}): Z_{a,c} = \langle \cdot  ,  \cdot  \rangle_{T((V))},  \| Z \|_{\mathcal{D}_{\mathbf{X}, \mathbf{Y}}} \leq M  \right \},
\]
where $M > \max\left( \|K\|_{\mathcal{D}_{\mathbf{X}, \mathbf{Y}}} , \|\tilde K\|_{\mathcal{D}_{\mathbf{X}, \mathbf{Y}}} \right)$.\newline
We begin by considering the level $(k,j)$ where at least one between $k$ and $j$ is equal to $0$. From the definition of $I_{\mathbf{X}, \mathbf{Y}}(g(K)-g(\tilde{K}))$ we write the remainder in the first parameter for this path as  
\[
R^{1}_{s,t}(u)[\eta, \xi] = I_{\mathbf{X}, \mathbf{Y}}(g(K)-g(\tilde{K}))_{t,u}[\eta, \xi] - I_{\mathbf{X}, \mathbf{Y}}(g(K)-g(\tilde{K}))_{s,u} \circ \left[ L_{\mathbf{X}^{< \kappa}_{s,t}} \boxtimes id_{T^{< \lambda}(V)} \right][\eta, \xi]. 
\]
where $\eta \in V^{\otimes j}, \xi \in V^{\otimes k}$.\newline
Concretely, when $k=j=0$, 
\[
R^{1}_{s,t}(u)[\mathbf{1}, \mathbf{1}] = \int_{[a, t]\times [c,u]}\left( g(K)_{r,w} - g(\tilde{K})_{r,w} \right) \, d(\mathbf{X}_r, \mathbf{Y}_w) - I_{\mathbf{X}, \mathbf{Y}}(g(K)-g(\tilde{K}))_{s,u} \left[\mathbf{X}^{< \kappa}_{s,t} \boxtimes \mathbf{1}\right],
\]
when $\eta \in V^{\otimes j} $ with $j \ge 1$, 
\begin{align*}
&R^{1}_{s,t}(u)[\eta, \mathbf{1}]\\
&= \int_{[c,u]} \left(g(K)_{t,w} - g(\tilde{K})_{t,w} \right) \, d \mathbf{Y}_w [\eta] - \int_{[c,u]}\left( g(K)_{s,w} - g(\tilde{K})_{s,w}\right) \circ \left[L_{\mathbf{X}^{< \kappa}_{s,t}} \boxtimes id_{T^{< \lambda}(V)}\right]\, d \mathbf{Y}_w [\eta]\\
&= \int_{[c,u]} \left(g(K)_{t,w} - g(\tilde{K})_{t,w} \right) - \left( g(K)_{s,w} - g(\tilde{K})_{s,w}\right) \circ \left[L_{\mathbf{X}^{< \kappa}_{s,t}} \boxtimes id_{T^{< \lambda}(V)} \right]\, d \mathbf{Y}_w[\eta]
\end{align*}
and when $\xi \in V^{\otimes k}$ with $k \ge 1$
\begin{align*}
& R^{1}_{s,t}(u)[\mathbf{1}, \xi] = \int_{[s,t]} \left(g(K)_{r,u} - g(\tilde{K})_{r,u} \right)\, d \mathbf{X}_r [\xi] - \left( g(K)_{s,u} - g(\tilde{K})_{s,u}\right) \circ \left[\mathbf{X}^{< \kappa}_{s,t} \boxtimes \xi\right].
\end{align*}
From classical estimates on one-parameter rough integrals (e.g. Section 2 in \cite{cass2022combinatorial}) and Proposition \ref{stability_proposition} we recover for this case 
\[
\left|R^1_{s,t}(u)[\eta, \xi] \right| \leq C d(K, \tilde K)\omega_{\mathbf{X}}(s,t)^{(\kappa - j)/p}\|\eta\|_{V^{\otimes j }} \|\xi\|_{V^{\otimes k }},
\]
here, and throughout the proof, $C$ denotes a positive constant that may vary from line to line, but depends only on $g,p,q,\omega_{\mathbf{X}},\omega_{\mathbf{Y}},\|K\|_{\mathcal{D}_{\mathbf{X},\mathbf{Y}}},$ and $\|\tilde K\|_{\mathcal{D}_{\mathbf{X},\mathbf{Y}}}$.\newline
Applying the same argument to $R^2$, we find that, upon restricting to these levels, the following bound holds
\[
\left| R^2_{u,v}(s)[\eta, \xi] \right| \leq C d(K, \tilde K)\omega_{\mathbf{Y}}(u,v)^{(\lambda - k)/q}\|\eta\|_{V^{\otimes j }} \|\xi\|_{V^{\otimes k }}.
\]
For  the remainder of remainders, whenever at least one between $(j,k)$ is zero we have 
\begin{align*}
&\mathbf{R} \begin{pmatrix} s,t \\ u,v \end{pmatrix}[\eta, \xi] \\
&=  I_{\mathbf{X}, \mathbf{Y}}(g(K)-g(\tilde{K}))_{t,v}[\eta, \xi] - I_{\mathbf{X}, \mathbf{Y}}(g(K)-g(\tilde{K}))_{s,v} \circ \left[ L_{\mathbf{X}^{< \kappa}_{s,t}} \boxtimes id_{T^{< \lambda}(V)} \right][\eta, \xi]\\
&\quad - \left(I_{\mathbf{X}, \mathbf{Y}}(g(K)-g(\tilde{K}))_{t,u} - I_{\mathbf{X}, \mathbf{Y}}(g(K)-g(\tilde{K}))_{s,u} \circ \left[ L_{\mathbf{X}^{< \kappa}_{s,t}} \boxtimes id_{T^{< \lambda}(V)} \right]\right) \circ [id_{T^{< \kappa}(V)} \boxtimes L_{\mathbf{Y}^{< \lambda}_{u,v}}][\eta, \xi].
\end{align*}
We can now write 
\begin{align*}
&\mathbf{R} \begin{pmatrix} s,t \\ u,v \end{pmatrix}[\mathbf{1}, \mathbf{1}] \\
&=  I_{\mathbf{X}, \mathbf{Y}}(g(K)-g(\tilde{K}))_{t,v}[\mathbf{1}, \mathbf{1}] -  \int_{[c,v]} (g(K) -(\tilde{K}))_{s,w} d\mathbf{Y}_w \left[ \mathbf{X}^{\leq \kappa}_{s,t}\right] + \int_{[c,v]} (g(K) -(\tilde{K}))_{s,w} d\mathbf{Y}_w \left[ \mathbf{X}^{(\kappa)}_{s,t}\right] \\
&\quad - \Bigg(\int_{[s,t]}(g(K)-g(\tilde{K}))_{r,u}d\mathbf{X}_r \left[ \mathbf{Y}^{\leq \lambda}_{u,v}\right] - \int_{[s,t]}(g(K)-g(\tilde{K}))_{r,u}d\mathbf{X}_r \left[ \mathbf{Y}^{(\lambda)}_{u,v}\right] - (g(K)-g(\tilde{K}))_{s,u}\left[ \mathbf{X}^{\leq \kappa}_{s,t} \boxtimes \mathbf{Y}^{\leq \lambda}_{u,v}\right] \\ 
&\qquad \quad  + (g(K)-g(\tilde{K}))_{s,u}\left[ \mathbf{X}^{(\kappa)}_{s,t} \boxtimes \mathbf{Y}^{\leq \lambda}_{u,v}\right]  + (g(K)-g(\tilde{K}))_{s,u}\left[ \mathbf{X}^{\leq \kappa}_{s,t} \boxtimes \mathbf{Y}^{(\lambda)}_{u,v}\right] \Bigg) .
\end{align*}
We can now decompose $\mathbf{R} \begin{pmatrix} s,t \\ u,v \end{pmatrix}[\mathbf{1}, \mathbf{1}]$ as 
\[
\mathbf{R} \begin{pmatrix} s,t \\ u,v \end{pmatrix}[\mathbf{1}, \mathbf{1}] = D \begin{pmatrix} s,t \\ u,v \end{pmatrix}  + E \begin{pmatrix} s,t \\ u,v \end{pmatrix} + F \begin{pmatrix} s,t \\ u,v \end{pmatrix},
\]
where 
\begin{align*}
&D \begin{pmatrix} s,t \\ u,v \end{pmatrix} =  I_{\mathbf{X}, \mathbf{Y}}(g(K)-g(\tilde{K}))_{t,v}[\mathbf{1}, \mathbf{1}] -  \int_{[c,v]} (g(K) -(\tilde{K}))_{s,w} d\mathbf{Y}_w \left[ \mathbf{X}^{\leq \kappa}_{s,t}\right]  \\
&\quad - \Bigg(\int_{[s,t]}(g(K)-g(\tilde{K}))_{r,u}d\mathbf{X}_r \left[ \mathbf{Y}^{\leq \lambda}_{u,v}\right]  - (g(K)-g(\tilde{K}))_{s,u}\left[ \mathbf{X}^{\leq \kappa}_{s,t} \boxtimes \mathbf{Y}^{\leq \lambda}_{u,v}\right] \Bigg),\\
&E \begin{pmatrix} s,t \\ u,v \end{pmatrix} =\int_{[c,v]} (g(K) -(\tilde{K}))_{s,w} d\mathbf{Y}_w \left[ \mathbf{X}^{(\kappa)}_{s,t}\right]  - (g(K)-g(\tilde{K}))_{s,u}\left[ \mathbf{X}^{(\kappa)}_{s,t} \boxtimes \mathbf{Y}^{\leq \lambda}_{u,v}\right],\\
&F \begin{pmatrix} s,t \\ u,v \end{pmatrix} =   \int_{[s,t]}(g(K)-g(\tilde{K}))_{r,u}d\mathbf{X}_r \left[ \mathbf{Y}^{(\lambda)}_{u,v}\right] - (g(K)-g(\tilde{K}))_{s,u}\left[ \mathbf{X}^{\leq \kappa}_{s,t} \boxtimes \mathbf{Y}^{(\lambda)}_{u,v}\right].
\end{align*}
Now, applying inequality (4.13) in \cite{cass2023fubini} and using the stability of composition with smooth functions (Proposition \ref{stability_proposition}), we obtain
\[
\left|D\begin{pmatrix} s,t \\ u,v \end{pmatrix} \right|
\leq A(p,q,\mathbf{R})\, C\, d(K,\tilde{K})\, \omega_{\mathbf{X}}(s,t)^{\theta}\omega_{\mathbf{Y}}(u,v)^{\theta},
\]
where $A$ and $\theta$ are as defined in Theorem \ref{stability_theorem_2D}.\newline
Additionally, via one-parameter sewing lemma and Proposition \ref{stability_proposition} we obtain 
\[
\left|E\begin{pmatrix} s,t \\ u,v \end{pmatrix} \right| \leq C d(K, \tilde K) \omega_{\mathbf{X}}(s,t)^{p/\kappa}\omega_{\mathbf{Y}}(u,v)^{\theta}, \quad \left|D\begin{pmatrix} s,t \\ u,v \end{pmatrix} \right|\leq C d(K, \tilde K) \omega_{\mathbf{X}}(s,t)^{\theta}\omega_{\mathbf{Y}}(u,v)^{q/\lambda} . 
\]
For $\eta \in V^{\otimes j}$, with $j \ge 1$ we obtain
\begin{align*}
\mathbf{R} \begin{pmatrix} s,t \\ u,v \end{pmatrix}[\eta, \mathbf{1}] &= \int_u^v \left(g(K)_{t,w} - g(\tilde{K})_{t,w} \right) - \left( g(K)_{s,w} - g(\tilde{K})_{s,w}\right) \circ \left[L_{\mathbf{X}^{^{< \kappa}}_{s,t}} \boxtimes id_{T^{< \lambda}(V)} \right]\, d \mathbf{Y}_w[\eta]\\
& \quad-\left(g(K)_{t,u} - g(\tilde{K})_{t,u} \right) - \left( g(K)_{s,u} - g(\tilde{K})_{s,u}\right) \left[L_{\mathbf{X}^{< \kappa}_{s,t}}\eta \boxtimes \mathbf{Y}^{\ge 1}_{u,v} \right]\\
&\quad +\left(g(K)_{t,u} - g(\tilde{K})_{t,u} \right) - \left( g(K)_{s,u} - g(\tilde{K})_{s,u}\right) \left[L_{\mathbf{X}^{< \kappa}_{s,t}}\eta \boxtimes \mathbf{Y}^{(\lambda)}_{u,v} \right],
\end{align*}
which allows us to conclude, via the one-parameter sewing lemma and regularity of $\mathbf{Y}^{(\lambda)}$, that
\[
\left|\mathbf{R} \begin{pmatrix} s,t \\ u,v \end{pmatrix}[\eta, \mathbf{1}] \right|
\leq C\, d(K,\tilde{K})\,\omega_\mathbf{X}(s,t)^{(\kappa-j)/p}\omega_{\mathbf{Y}}(u,v)^{\theta}\,\|\eta\|_{V^{\otimes j}}.
\]
A similar calculation yields
\[
\left|\mathbf{R} \begin{pmatrix} s,t \\ u,v \end{pmatrix}[\mathbf{1}, \xi] \right|
\leq C\, d(K,\tilde{K})\,\omega_\mathbf{X}(s,t)^{\theta}\omega_{\mathbf{Y}}(u,v)^{(\lambda-k)/q}\,\|\xi\|_{V^{\otimes k}},
\]
whenever $\xi \in V^{\otimes k}$ with $k \geq 1$.

Since \(K\) and \(\tilde K\) are fixed points of \(\Gamma^{a,a+\delta}\),
then for every $j \in [1 :\kappa)$ and $k \in [1 :\lambda)$ it holds that
\[
K^{(j,k)} = g^{(j-1, k-1)}(K).
\]
But, since $g$ is linear, by inspection of formula \eqref{def_g_sig}, this implies that the only levels needed to show that the evaluation $\Gamma^{a, c}(K) - \Gamma^{a,c}(\tilde{K})$ is contracting for a sufficiently small $\delta$ are the levels $(j, k)$ where one between $j, k$ is zero.\newline
Since $\delta$ can be chosen uniformly, uniqueness on $[a,b]\times [c,d]$  follows by extending the solution step by step in each coordinate direction. \newline
The stability property follows from the corresponding stability estimates for joint integrals. 
More precisely, for any rectangular partition $\{(s_i,u_j)\}_{i,j}$ with sufficiently small mesh size, one can show that
\begin{align*}
&\left\| \Gamma^{a, c}(Z) - \Gamma^{a,c}(\tilde Z) \right\|_{\infty; [s_i,s_{i+1}] \times [u_j,u_{j+1}]} \\
&\quad \leq C
\Big(
|Z_{s_i,u_j}-\tilde{Z}_{s_i,u_j}| 
+ \|\mathbf{X}-\tilde{\mathbf{X}}\|_{p;[s_i,s_{i+1}]} + \|\mathbf{Y}-\tilde{\mathbf{Y}}\|_{q;[u_j,u_{j+1}]} \Big) .
\end{align*}
where $Z \in D^{p,q}_{\mathbf{X}, \mathbf{Y}}([a,b] \times [c,d])$ and $\tilde{Z} \in D^{p,q}_{\tilde{ \mathbf{X}}, \tilde{\mathbf{Y}}}([a,b] \times [c,d])$.\newline
By progressive substitution over the partition and standard $p \thinspace (\text{and }q)$-variation norm estimates, the claim follows.
\end{proof}

\subsubsection{The smooth rough path case}
Under the additional assumption that $\mathbf{X}$ and $\mathbf{Y}$ are smooth rough paths, we show that the signature kernel admits a natural interpretation as the solution of a two-parameter RDE. In this way, the associated linear PDE, first established in \cite{lemercier2024log}, emerges directly from our framework. The argument hinges on the fact that the kernel is characterized as a fixed point of the two-parameter RDE, which is precisely the structure from which the PDE arises.

In the smooth rough path setting considered here, Theorem ~\ref{uniqueness_sig_kernel} also yields a transparent uniqueness argument for the kernel and provides a natural starting point for approximation procedures, such as replacing the drivers by smooth (or piecewise-linear) approximations.

More precisely, let $\mathbf{X}$ and $\mathbf{Y}$ be smooth rough paths and define the diagonal derivatives 
\[
\boldsymbol{x}_t := \left.\partial_t\right|_{t=s} \mathbf{X}_{s,t},
\qquad
\boldsymbol{y}_v := \left.\partial_v |_{v=u} \mathbf{Y}_{u,v}\right.
\]
From the previous section, the signature kernel admits the representation
\[
K^{\mathbf{X}, \mathbf{Y}}_{t,v}
= 1 + \int_a^t \int_c^v g\!\left(K^{\mathbf{X}, \mathbf{Y}}\right)_{s,u}
\big[ \boldsymbol{x}_s, \boldsymbol{y}_u \big]\, du\, ds,
\]
or, equivalently,
\[
\frac{\partial^2 K^{\mathbf{X}, \mathbf{Y}}_{t,v}}{\partial t\, \partial v}
= g\!\left(K^{\mathbf{X}, \mathbf{Y}}\right)_{t,v}
\big[ \boldsymbol{x}_t, \boldsymbol{y}_v \big].
\]

Fix a basis $\{e_1,\dots,e_m\}$ of $V$, let $I \neq \emptyset$ be a word in the alphabet $\{1, \dots, m\}$, and define the maps
\[
\begin{aligned}
f \colon\; & [a,b] \times [c,d] \longrightarrow \mathbb{R}, 
& f(t,v) &:= (\pi_0 \boxtimes \pi_0)\!\left( K^{\mathbf{X},\mathbf{Y}}_{t,v} \right), \\[0.5em]
\phi \colon\; & [a,b] \times [c,d] \longrightarrow \mathrm{Hom}\!\left(T^{<\lambda}(V), \mathbb{R}\right), 
& \phi(t,v)[e_I] &:= 
\big\langle L^*_{S(\mathbf{Y})_{c,v}} S(\mathbf{X})_{a,t}
- f(t,v)\,\mathbf{1},\, e_I \big\rangle, \\[0.5em]
\psi \colon\; & [a,b] \times [c,d] \longrightarrow \mathrm{Hom}\!\left(T^{<\kappa}(V), \mathbb{R}\right), 
& \psi(t,v)[e_I] &:= 
\big\langle L^*_{S(\mathbf{X})_{a,t}} S(\mathbf{Y})_{c,v}
- f(t,v)\,\mathbf{1},\, e_I \big\rangle .
\end{aligned}
\]
Here $L^*, R^*$ denote respectively the adjoint of the left and right tensor multiplication operator on $F$. The maps $\varphi$ and $\psi$ are then extended to $T^{<\lambda}(V)$ and $T^{<\kappa}(V)$ by linearity.  \newline

Within this setting, we are now in a position to provide an alternative proof of the PDE system satisfied by the signature kernel, first derived in \cite{lemercier2024log}.
In the proof, rather than expanding the two signatures into their homogeneous components, and then extracting the terms which give the PDE, we take the rough integral equation as the primary object. The explicit characterization of each level of this equation then leads directly to the PDE.

\begin{lemma}
Let $\mathbf{X}$ and $\mathbf{Y}$ be as above. Then the signature kernel satisfies the linear PDE

\begin{equation*}
\frac{\partial^2 f}{\partial t \, \partial v}
= f \langle \boldsymbol{x}_t, \boldsymbol{y}_v \rangle
+  \phi[R^*_{\boldsymbol{x}_t}  \boldsymbol{y}_v] 
+  \psi[R^*_{\boldsymbol{y}_v} \boldsymbol{x}_t],
\end{equation*}

\begin{equation*}
\frac{\partial \phi}{\partial t}
= f \boldsymbol{x}_t
+ \phi \circ R^*_{\boldsymbol{x}_t}
+ L^*_{\psi}\boldsymbol{x}_t
- \varphi[\boldsymbol{x}_t] \mathbf{1} ,
\end{equation*}

\begin{equation*}
\frac{\partial \varphi}{\partial v}
= f \boldsymbol{y}_v
+ \psi \circ R^*_{\boldsymbol{y}_v}
+ L^*_{\phi}\boldsymbol{y}_v
- \psi[\boldsymbol{y}_v] \mathbf{1} .
\end{equation*}

\medskip

\noindent
with boundary conditions
\[
\begin{aligned}
f(a,v) &= 1, 
& \phi(a,v) &= \mathbf{0}, 
& \psi(a,v) &= \langle \mathbf{Y}_{c,v} - \mathbf{1}, \cdot \rangle, \\
f(t,c) &= 1, 
& \phi(t,c) &= \langle \mathbf{X}_{a,t} - \mathbf{1}, \cdot\rangle,
& \psi(t,c) &= \mathbf{0},
\end{aligned}
\]
where $\mathbf{0}$ is the zero in the tensor algebra.
\end{lemma}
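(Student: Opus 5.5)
We will treat the fixed-point identity $K = 1 + I_{\mathbf X,\mathbf Y}(g(K))$ from Theorem~\ref{uniqueness_sig_kernel} as the primary object. We read it off level by level, using \eqref{definition_I_X_Y} together with the explicit form \eqref{g_composed_sig} of $g(K)$. For smooth rough paths all rough integrals reduce to Lebesgue integrals against the diagonal derivatives $\boldsymbol{x},\boldsymbol{y}$. This is the standard consequence of \cite{bellingeri2022smooth}: if $Z$ is controlled by a smooth $\mathbf X$, then $\int_a^t Z_s\,d\mathbf X_s=\int_a^t Z_s[\boldsymbol x_s]\,ds$, since the local expansions $Z_s[\mathbf X_{s,s+h}]=hZ_s[\boldsymbol x_s]+o(h)$ sum to the Riemann sum. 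Hence the trace component is
\[
f(t,v)=1+\int_a^t\int_c^v g(K)_{s,u}[\boldsymbol x_s\boxtimes\boldsymbol y_u]\,du\,ds .
\]
Here $\boldsymbol x_s,\boldsymbol y_u$ are Lie elements with components at all levels up to $\kappa$ (resp.\ $\lambda$), so the full controlled path $g(K)$ is used, not just $g(K)^{(0,0)}$.

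The first step is to evaluate $g(K)_{s,u}[\boldsymbol x_s\boxtimes\boldsymbol y_u]=\langle S(\mathbf X)_{a,s}\otimes\boldsymbol x_s,\,S(\mathbf Y)_{c,u}\otimes\boldsymbol y_u\rangle$ and split it according to how the word structures overlap. Write $\langle A\otimes x,B\otimes y\rangle$ with $A=S(\mathbf X)_{a,s}$ and $B=S(\mathbf Y)_{c,u}$. There are three cases.
\begin{itemize}
\item The appended letters align exactly with each other, giving the term $f\langle\boldsymbol x,\boldsymbol y\rangle$ after using $\langle A,B\rangle=f$.
\item $y$ is absorbed into $A\otimes x$ beyond $x$. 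This is expressed via $L^*_B A - f\mathbf 1$, i.e.\ $\phi$, evaluated on $R^*_{\boldsymbol x}\boldsymbol y$.
\item The symmetric case, giving $\psi[R^*_{\boldsymbol y}\boldsymbol x]$.
\end{itemize}
These three pieces produce the first equation once we differentiate in $t$ and $v$. The subtraction of $f\mathbf 1$ in the definitions of $\phi,\psi$ is exactly what prevents double counting of the aligned case. We will check the decomposition on basis words $e_I$ by induction on length, using only the definition of $L^*,R^*$ and the multiplicativity of signatures.

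The second step handles the equations for $\phi$ and $\psi$. Here we use that the mixed levels $K^{(j,0)}$ and $K^{(0,k)}$ of the fixed point are, by \eqref{definition_I_X_Y}, one-parameter integrals $\int_c^v g(K)_{t,u}\,d\mathbf Y_u$ and $\int_a^t g(K)_{s,v}\,d\mathbf X_s$, while $K^{(j,k)}=g(K)^{(j-1,k-1)}$. Identifying these components with $\phi$ and $\psi$, we differentiate $\phi(t,v)[e_I]=\langle L^*_{S(\mathbf Y)_{c,v}}S(\mathbf X)_{a,t}-f\mathbf 1,e_I\rangle$ in $t$ using $\partial_t S(\mathbf X)_{a,t}=S(\mathbf X)_{a,t}\otimes\boldsymbol x_t$. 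We then expand $L^*_B(A\otimes x)$ by the same three-way word-overlap decomposition. The resulting terms are $f\boldsymbol x_t$ (all of $B$ consumed by $A$), $\phi\circ R^*_{\boldsymbol x_t}$, and $L^*_\psi\boldsymbol x_t$ ($B$ overlapping into $x$). The correction $-\phi[\boldsymbol x_t]\mathbf 1$ comes from differentiating $-f\mathbf 1$, together with the contribution of $\partial_t f$ at the empty word. The $\psi$ equation follows by symmetry.

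The boundary conditions are immediate. At $t=a$ we have $S(\mathbf X)_{a,a}=\mathbf 1$, so $f=1$, $\phi=L^*_{S(\mathbf Y)}\mathbf 1-\mathbf 1$ restricted to nonempty words, which vanishes, and $\psi=\langle\mathbf Y_{c,v}-\mathbf 1,\cdot\rangle$. The case $v=c$ is symmetric. The main obstacle is the combinatorial bookkeeping of the adjoint expansion $L^*_B(A\otimes x)$, in particular getting the correct $\mathbf 1$-corrections. We would first verify the identities on words of length one and two, then prove the general case by induction on the length of $e_I$.
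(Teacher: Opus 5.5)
Your route is the paper's. The $f$-equation is read off the trace level of $K=1+I_{\mathbf X,\mathbf Y}(g(K))$, reduced to a Lebesgue integral against $\boldsymbol x_s\boxtimes\boldsymbol y_u$, and split according to $|\eta|=|\xi|$, $|\eta|>|\xi|$, $|\eta|<|\xi|$ in $\langle S(\mathbf X)_{a,s}\otimes\eta,S(\mathbf Y)_{c,u}\otimes\xi\rangle$. That split follows directly from the inner product on homogeneous components, so you need neither induction nor multiplicativity there. For the $\phi,\psi$ equations the paper only defers to the computation of \cite{lemercier2024log}, which is the direct differentiation via $\partial_t S(\mathbf X)_{a,t}=S(\mathbf X)_{a,t}\otimes\boldsymbol x_t$ that you describe.

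One step does not go through as you state it: the constant correction in the $\phi$-equation. Differentiating $-f\mathbf 1$ gives $-\partial_t f\,\mathbf 1$, and, since $\boldsymbol x_t$ has no level-zero component,
\[
\partial_t f=\langle S(\mathbf X)_{a,t}\otimes\boldsymbol x_t,\,S(\mathbf Y)_{c,v}\rangle=\big\langle \boldsymbol x_t,\,L^*_{S(\mathbf X)_{a,t}}S(\mathbf Y)_{c,v}\big\rangle=\psi[\boldsymbol x_t].
\]
This is not $\phi[\boldsymbol x_t]=\langle S(\mathbf X)_{a,t},S(\mathbf Y)_{c,v}\otimes\boldsymbol x_t\rangle$.

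Consistency forces the same conclusion. We have $\phi[\mathbf 1]\equiv 0$, and at the empty word the other three terms contribute exactly $\langle L^*_\psi\boldsymbol x_t,\mathbf 1\rangle=\psi[\boldsymbol x_t]$, which the correction has to cancel. A concrete check: for $V=\mathbb R$, $X_t=t$, $Y_v=v$ one gets $\partial_t f=\sum_k t^kv^{k+1}/(k!(k+1)!)=\psi[e_1]$, whereas $\phi[e_1]=\sum_k t^{k+1}v^k/((k+1)!k!)$, and these differ whenever $t\neq v$.

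So the computation you outline actually yields
\[
\partial_t\phi=f\boldsymbol x_t+\phi\circ R^*_{\boldsymbol x_t}+L^*_\psi\boldsymbol x_t-\psi[\boldsymbol x_t]\mathbf 1,
\]
and, by the symmetry you invoke,
\[
\partial_v\psi=f\boldsymbol y_v+\psi\circ R^*_{\boldsymbol y_v}+L^*_\phi\boldsymbol y_v-\phi[\boldsymbol y_v]\mathbf 1.
\]
The displayed statement interchanges the $\varphi/\psi$ labels in these correction terms, and writes $\varphi$ for $\psi$ on the left of the third equation. Your sketch copies that slip instead of deriving the term. Because the correction only acts on the empty word, the rest of your argument is unaffected: the nonempty-word components, the $f$-equation and the boundary conditions all stand.
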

\begin{proof}
    We begin by determining the equation for $f$. From the result above we can write $f$ as 
    \begin{align*}
    f(t,v) &= 1 + \int_{[a,t] \times [c,v]} g(K^{\mathbf{X}, \mathbf{Y}})_{s,u} d(\mathbf{X}_s, \mathbf{Y}_u).
    \end{align*}

    From \eqref{g_composed_sig}, together with the canonical identification
\[
\mathrm{Hom}\!\left(T^{< \kappa}(V)\boxtimes T^{< \lambda}(V),\mathrm{Hom}\!\left(V\boxtimes V,\mathbb{R}\right)\right)
\cong
\mathrm{Hom}\!\left(T^{\le \kappa}(V)\boxtimes T^{\le \lambda}(V),\mathbb{R}\right),
\]
it follows that, for \(\eta \in V^{\otimes j}\) and \(\xi \in V^{\otimes k}\), with
\(j \in [1:\kappa]\) and \(k \in [1:\lambda]\),
\begin{align*}
g\!\left(K^{\mathbf X,\mathbf Y}\right)_{s,u}[\eta,\xi]
&=
\begin{cases}
\langle S(\mathbf X)_{a,s}, S(\mathbf Y)_{c,u}\rangle \, \langle \eta,\xi\rangle,
& \text{if } j=k, \\[0.5em]
\displaystyle
\left\langle S(\mathbf X)_{a,s}\otimes \eta,\, S(\mathbf Y)_{c,u} \otimes \xi \right\rangle
& \text{if } j>k, \\[0.8em]
\displaystyle
\left\langle S(\mathbf X)_{a,s}\otimes \eta,\, S(\mathbf Y)_{c,u} \otimes \xi \right\rangle
& \text{if } j<k.
\end{cases}\\
&= \begin{cases}
\langle S(\mathbf X)_{a,s}, S(\mathbf Y)_{c,u}\rangle \, \langle \eta,\xi\rangle,
& \text{if } j=k, \\[0.5em]
\displaystyle
\left\langle L^*_{S(\mathbf X)_{a,s}} S(\mathbf Y)_{c,u},\,  R^*_\xi \eta \right\rangle
& \text{if } j>k, \\[0.8em]
\displaystyle
\left\langle L^*_{S(\mathbf Y)_{c,u} } S(\mathbf X)_{a,s},\,   R^*_{\eta}\xi \right\rangle
& \text{if } j<k.
\end{cases}
\end{align*}
Notice that for every non-empty word of appropriate maximum length $I, J$, with $|I| > |J|$ we have $
\left\langle S(\mathbf X)_{a,s}\otimes e_I,\, S(\mathbf Y)_{c,u} \otimes e_J\right\rangle = \psi[R^*_{e_J} e_I]$   and $\phi[R^*_{e_I} e_J] = 0$, similarly if $|I| < |J|$, $
\left\langle S(\mathbf X)_{a,s} \otimes e_I,\, S(\mathbf Y)_{c,u} \otimes e_J\right\rangle = \phi[R^*_{e_J} e_I]$ and $\varphi[R^*_{e_J} e_I] = 0$. Moreover, if $I=J$, $\psi[R^*_{e_J} e_I] = \phi[R^*_{e_I} e_J] = 0$.\newline  
This, in concert with the formula for $g\left(K^{\mathbf X,\mathbf Y}\right)$ outlined above, implies that in order to characterize $f$  we can restrict our attention to the maps
\(f, \psi\) and \(\varphi\). In fact, we have
\begin{align*}
    f(t,v) &= 1 + \int_{[a,t] \times [c,v]} g(K^{\mathbf{X}, \mathbf{Y}})_{s,u} d(\mathbf{X}_s, \mathbf{Y}_u)\\
    &= 1 + \int_a^t \int_c^v g(K^{\mathbf{X}, \mathbf{Y}})_{s,u} \left[ \boldsymbol{x}_s, \boldsymbol{y}_u\right] ds\, du\\
    &= 1+ \int_a^t \int_c^v f_{s,u} \langle \boldsymbol{x}_s, \boldsymbol{y}_u \rangle
+  \phi_{s,u}[R^*_{\boldsymbol{x}_s}  \boldsymbol{y}_u] 
+  \psi_{s,u}[R^*_{\boldsymbol{y}_u} \boldsymbol{x}_s] ds \, du.
\end{align*}
This concludes the proof for $f$.\newline
For the equations for $\phi$ and $\psi$ is sufficient to study the expressions $\langle S(\mathbf X)_{a,s} , S(\mathbf Y)_{c,u} \otimes \xi\rangle$ and $\langle S(\mathbf X)_{a,s} \otimes \eta, S(\mathbf Y)_{c,u}\rangle$ respectively. The calculations for these parts are almost identical to the ones in \cite{lemercier2024log} so we omit them.
\end{proof}

\begin{remark}
As seen above, the equations for $f, \psi$ and $\varphi$ are enough to characterize $g(K^{\mathbf{X}, \mathbf{Y}})$, $I_\mathbf{X}(g(K^{\mathbf{X}, \mathbf{Y}}))$ and $I_{\mathbf{Y}}(g(K^{\mathbf{X}, \mathbf{Y}}))$. But this implies from the same reasoning as Theorem \ref{uniqueness_sig_kernel} that they can be used to characterize all the levels in the controlled path $K^{\mathbf{X}, \mathbf{Y}}$.
\end{remark}

\begin{comment}
    and using the definition of the equation in the previous section we can write
\begin{align*}
    f(t,v) &= 1 + \int_a^t \int_c^v \sum_{i=1}^d f_{s,u} \langle \mathbf{x}^{(i)}_s, \mathbf{y}^{(i)}_u \rangle ds du + \\
    &\quad +\int_a^t \int_c^v \sum_{i=1}^d \sum_{j=i+1}^d \varphi_{s,u} [R_{\mathbf{x}^{(i)}_s} \mathbf{y}^{(i)}_u ] ds du + \int_a^t \int_c^v \sum_{j=1}^d \sum_{i=j+1}^d \psi_{s,u} [R_{\mathbf{y}^{(i)}_s} \mathbf{x}^{(i)}_u ] ds du .
\end{align*}
\end{comment}

\begin{comment}
The definition of signature allows to write for each non empty word $I = i_1...i_n$ 
\begin{align*}
    \varphi(t,v)[e_I] = \int_a^t (\varphi(s,v)[e_{i_1...i_{n-1}}]1_{n > 1} + f(s,v) 1_{n = 1} ) \mathbf{x}^{(i_n)}_s   ds, 
\end{align*}
and similarly
\begin{align*}
    \psi(t,v)[e_I] = \int_c^v (\psi(t,u)[e_{i_1...i_{n-1}}]1_{n > 1} + f(t,u) 1_{n = 1} ) \mathbf{y}^{(i_n)}_u   du. 
\end{align*}
\end{comment}

\subsection{The Schwinger-Dyson kernel}
A second example of a two-parameter RDE is given by an extension of the Schwinger--Dyson kernel equation introduced in \cite{cass2024free}. The latter arises from the study of kernels induced by path developments in the matrix Lie group of $N \times N$ anti-Hermitian matrices $\mathfrak{u}_N$. 

More precisely, consider the unitary group-valued evolution defined by
\begin{equation*}
dZ^N_t = Z^N_t \cdot M(dX_t), \qquad Z^N_0 = Id_N,
\end{equation*}
where $V \equiv \mathbb{R}^d$, $M \in \mathrm{Hom}(V, \mathfrak{u}_N)$, $X \colon [0,T] \to V$ is a path of bounded variation and $Id_N$ is the $N \times N$ identity matrix.

Define the functional $\mathcal{U}(X,M) := Z^N_T$ and let $\langle \cdot, \cdot  \rangle_{HS,N }$ denote the Hilbert-Schmidt inner product on $\mathbb{C}^{N \times N }$. One can show, under a suitable choice of probability measure $\xi_N$ on $\mathfrak{u}_N$, the existence of the limit
\begin{equation}\label{SD_kernel_definition}
K(X^1, X^2) = \lim_{N \to \infty} \frac{1}{N} \mathbb{E}\!\left[ \left\langle \mathcal{U}(X^1, M), \mathcal{U}(X^2, M) \right\rangle_{HS, N} \right],
\end{equation}
for any couple of paths with bounded variation $X^1$ and $X^2$. In the remainder of this work, we refer to $K$ as the Schwinger-Dyson kernel.

This kernel can also be characterized in two additional ways, the first one as being the solution to the quadratic functional equation
\begin{equation}\label{SD_kernel_equation_BV}
K(s,t) = 1 - \int_s^t \int_s^r K(s,u)\, K(u,r)\, \langle dX_u, dX_r \rangle 
\end{equation}
where the path $X$ is defined as the concatenation $X := X^1 * \overleftarrow{X^2}$.\newline
And the second one as being the contraction of the signature $S(X)$ against the moment sequence $\varphi(\cdot)$ of a collection of $d$-free semicircular random variables
\begin{equation}\label{expansion_K}
K(s,t) = \sum_{|L| = 0}^\infty i^{|L|} \varphi(L) S^{(L)}(X)_{s,t},
\end{equation}
where the sum is taken over all words $L$, and $i$ denotes the imaginary unit.\newline
Leveraging this representation, we can extend the definition of the Schwinger--Dyson kernel to the case where $X$ is replaced by a  geometric $p$-rough paths of arbitrarily low regularity via an approximation argument, presented in Appendix~\ref{appendix_existence_SD}. In this case equation \eqref{SD_kernel_equation_BV} reads
\begin{equation}\label{eq:uniqueness_rough_SD_1}
    K(s,t) = 1 -  \int_s^t \int_s^r f(K(s,u), K(u,r)) d\mathbf{X}_u\,d\mathbf{X}_r.
\end{equation}
where 
\begin{align*}    
f: V \times V &\to \mathrm{Hom}(V \boxtimes V), \quad f(z_1, z_2)  [\eta, \xi] = z_1 z_2 \langle \eta, \xi \rangle_V .
\end{align*}
It is now natural to ask whether this equation \eqref{eq:uniqueness_rough_SD_1}
admits a unique solution, the positive answer to this question is given in the following theorem.

\begin{theorem}
Let \(f\) be defined by
\[
f: V \times V \to \mathrm{Hom}(V \boxtimes V, \mathbb{R}), 
\qquad 
f(z_1,z_2)[\eta,\xi] := z_1 z_2 \langle \eta,\xi\rangle_V.
\]
If \(Z\) is a solution to
\begin{equation}\label{eq:uniqueness_rough_SD}
Z(s,t)
=
1-\int_s^t \int_s^r f(Z_{s,\cdot},Z)_{u,r}\,d\mathbf{X}_u\,d\mathbf{X}_r
\end{equation}
in the sense of Definition \ref{solution_2d_RDE}, then \(Z=K\). In particular, \(K\) is the unique solution to \eqref{eq:uniqueness_rough_SD} in the class \(\mathcal{D}^p_{\mathbf X}(\Delta_{[0,T]},\mathbb R^d)\). Moreover, if \(\tilde{\mathbf{X}} \in \Omega G_{p,\omega_{\mathbf{X}}}([a,b],V)\) and $\tilde K$ is the solution of \eqref{eq:uniqueness_rough_SD} driven by this rough path, then
\[
\|K-\tilde K\|_{\infty}
\leq
C_{p,q,\omega_{\mathbf{X}},\omega_{\mathbf{Y}}}
\|\mathbf{X}-\tilde{\mathbf{X}}\|_{p}.
\]
\end{theorem}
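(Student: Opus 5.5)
The plan mirrors the proof of Theorem~\ref{uniqueness_sig_kernel}: a local contraction estimate for the solution map on small squares, then patching.

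\emph{Existence.} By the approximation argument of Appendix~\ref{appendix_existence_SD}, $K$, built from the expansion \eqref{expansion_K}, lifts to a two-parameter controlled path in $\mathcal{D}^p_{\mathbf X}(\Delta_{[0,T]},\mathbb R)$ and solves \eqref{eq:uniqueness_rough_SD} in the sense of Definition~\ref{solution_2d_RDE}. So what must be proved is uniqueness together with stability.

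\emph{Local estimate.} Let $Z$ and $\tilde Z$ be two solutions. Fix $p'\in(p,\kappa+1)$ and work in $\mathcal{D}^{p'}_{\mathbf X}$ on a small triangle $\Delta_{[\tilde a,\tilde a+\delta]}$, intersecting with a ball of radius $M$ larger than both norms.
\begin{itemize}
\item By the Remark following Definition~\ref{solution_2d_RDE}, $(u,r)\mapsto (Z_{s,u},Z_{u,r})$ is a two-parameter controlled path. Proposition~\ref{stability_proposition}, applied to the bilinear map $f$, then gives a Lipschitz bound $d(f(Z_{s,\cdot},Z),f(\tilde Z_{s,\cdot},\tilde Z))\le C\,d(Z,\tilde Z)$.
\item The simplex integral is defined through \eqref{defining_identity_2p_simplex} as half the square integral of the symmetrisation. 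Symmetrisation is linear, and the Proposition above shows it preserves controlledness; I expect it also to be bounded in $d$, since its remainders are sums of original ones.
\item Hence the stability estimate of Theorem~\ref{stability_theorem_2D} applies, exactly as in the $D$, $E$, $F$ decomposition used for the signature kernel. It bounds the levels of $I_{\mathbf X}^\Delta(f(Z)-f(\tilde Z))$ with at least one zero index by $C\,\omega_{\mathbf X}(\tilde a,\tilde a+\delta)^{\theta'}\,d(Z,\tilde Z)$, with a gain exponent $\theta'>0$ coming from $p'>p$.
\end{itemize}
The mixed levels $(j,k)$ with $j,k\ge1$ satisfy, by \eqref{definition_I_X_Y}, $Z^{(j,k)}=f(\cdot)^{(j-1,k-1)}$. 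Because $f$ is bilinear with no higher derivatives beyond order two, these levels are determined by lower levels, so they inherit the contraction. Choosing $\delta$ small, uniform via the control, makes the map a strict contraction, giving $Z=\tilde Z$ locally.

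\emph{Globalisation.} This is the main obstacle, because the equation is nonlocal in the first variable through $Z_{s,\cdot}$. I would proceed along diagonal bands: first the strip $\{t-s\le\delta\}$, then induct on the distance $t-s$. For fixed $s$, the integrand on $\Delta_{[s,t]}$ involves $Z(s,u)$ with $u<t$ and $Z(u,r)$ with $r-u<t-s$. On the band $t-s\in[k\delta,(k+1)\delta]$ the unknown contributions therefore enter only through a region of $\omega$-size $O(\delta)$, while the rest is already identified by induction. Splitting $\Delta_{[s,t]}$ into a known part and a thin part, and using additivity of the square integral for the symmetrised path, reduces each step to the same local contraction. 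The delicate point is checking that the remainder norms of the known part do not blow up; here I would use the uniform a priori bound $M$.

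\emph{Stability.} Apply the same band decomposition with $\tilde{\mathbf X}$. Theorem~\ref{stability_theorem_2D} together with Proposition~\ref{stability_proposition} gives, on each cell, a bound on $\|K-\tilde K\|_\infty$ by the values on earlier cells plus $\|\mathbf X-\tilde{\mathbf X}\|_p$. A discrete Gronwall argument over the finitely many cells then yields the stated estimate.
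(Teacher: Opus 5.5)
Your architecture matches the paper's: existence from Appendix~\ref{appendix_existence_SD}, a contraction in $\mathcal D^{p'}_{\mathbf X}$ on a ball of radius $M$ over a small simplex with the gain from $p'>p$, patching, and cell-wise stability. The local step, however, has a genuine gap. You treat the triangular solution map as if its controlled lift were \eqref{definition_I_X_Y}, so that the estimates go through ``exactly as'' in the $D,E,F$ splitting for the signature kernel.

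That lift is for a rectangle $[a,t]\times[c,v]$ with a fixed lower corner and an integrand independent of $(t,v)$. Here the lower corner of $\Delta_{[s,t]}$ moves with $s$, and the integrand $f(Z_{s,\cdot},Z)_{u,r}$ depends on $s$ through $Z_{s,u}$. So the first-parameter remainder $R^1_{w,s}(t)$ of $Z-\tilde Z$ needs two things:
(i) the expansion of $\overline f(s,u,r)=f(Z_{s,\cdot},Z)_{u,r}-f(\tilde Z_{s,\cdot},\tilde Z)_{u,r}$ in $s$, i.e.\ $\overline f$ regarded as a two-parameter controlled path in $(u,r)$ with values in paths controlled in $s$ (your fixed-$s$ Lipschitz bound does not give this);
(ii) the diagonal boundary terms $\overline f(s,s,r)$, $\overline f(s,s,t)$, $\overline f(s,t,t)$, $\overline f(s,s,s)$ paired with the shuffle coproduct, which are the analogues of $Z^L,Z^U$ in Lemma~\ref{proposition_L_U_controlled}.
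This is exactly what the paper derives as \eqref{formula_K_K}. It then bounds $R^1$ and $R^2$ by one-parameter sewing, and $\mathbf R$ through the splitting $A,B,D_1,\dots,D_4$, with the rectangle estimate used only for $B$.

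Your mechanism for the mixed levels fails for the same reason: they are not pointwise functions of lower levels. Already for $\kappa=2$ and smooth $X$, differentiating the equation gives
\[
Z^{(1,1)}(s,t)[e_i,e_j]=\delta_{ij}\,Z(s,t)-\int_s^t Z^{(1,0)}(s,u)[e_i]\,Z(u,t)\,dX^{j}_u,
\qquad
Z^{(1,0)}(s,u)[e_i]=\int_s^u Z(s,v)\,Z(v,u)\,dX^{i}_v .
\]
Neither has the form prescribed by \eqref{definition_I_X_Y}. The contraction for these levels does hold, but only via sewing bounds on such integral terms over the small simplex, which you can write down only once the correct lift is in hand.

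Your globalisation also takes a different route from the paper's. Inducting on $t-s$ along diagonal bands puts the cuts at $s+k\delta$ and $t-k\delta$, which move with $(s,t)$. It also forces you to work with a controlled-path norm on non-simplex domains, including second-level remainders over rectangles straddling several bands; this is the ``delicate point'' you leave open.

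The paper instead fixes $s_0$ and enlarges $\Delta_{[s_0,t_0]}$ to $\Delta_{[s_0,t_0+\delta]}$. For $s\le t_0$ it cuts at the fixed time $t_0$:
\[
\Delta_{[s,t]}=\Delta_{[s,t_0]}\cup\bigl([s,t_0]\times[t_0,t]\bigr)\cup\Delta_{[t_0,t]}.
\]
The difference vanishes on the first piece, the rectangle has one side of $\omega_{\mathbf X}$-size at most $\omega_{\mathbf X}(t_0,t_0+\delta)$, and the last piece is a small simplex. For $s>t_0$ only the small simplex remains. Because the cut is fixed, every domain stays a simplex and the pieces depend on $(s,t)$ in a way the framework already handles, so no new norms are needed. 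Your band scheme can probably be made to work, but it is considerably heavier.
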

\begin{proof}
Fix $s \in [0, T]$, and $\tilde{p} \in (p, \kappa  +1 )$. The first step consists in proving local uniqueness of \eqref{eq:uniqueness_rough_SD} on a two simplex $\Delta_{[s_0,t_0]}$, where $t_0$ is sufficiently close to $s_0$.\newline
This will be obtained by using the stability properties of the solution map $\Gamma$ associated to equation \eqref{eq:uniqueness_rough_SD}
\begin{align*}
&\Gamma: B(\Delta_{[s_0, t_0 ]}) \to \mathcal{D}^p_{\mathbf{X}}(\Delta_{[s_0, t_0]}, \mathbb{R}),   \\ 
& \Gamma(Q)_{s,t} = 1- I^{\Delta_{[s, t]}}_{\mathbf{X}}(f(Q_{s, \cdot}, Q)),
\end{align*}
where
\begin{align*}
B(\Delta_{[s_0, t_0]}) := \left \{ Q \in \mathcal{D}^{\tilde{p}}_{\mathbf{X}}(\Delta_{[s_0, t_0]}, \mathbb{R}) \, : \, Q_{r,r} =  K_{r,r}  \text{ for every }  r \in [s_0, t_0],   \,  \|Q\|_{ \mathcal{D}^{\tilde{p}}_{ \mathbf{X}, \mathbf{X}}} \leq M \right \},
\end{align*}
here $M$ is an appropriately chosen constant, that we will fix later in the proof. The choice of $K_{r,r}$ as the initial condition is justified from the fact that every fixed point for the map $\Gamma$ will have the same initial condition.\newline
Assume $\tilde{K}$ is another fixed point for $\Gamma$. We will show that $\tilde K = K$ on $\Delta_{[s_0,t_0]}$.\newline
Define
\(\overline{f}(s,u,r) := f(K_{s,\cdot}, K)_{u,r} - f(\tilde{K}_{s, \cdot}, \tilde{K})_{u,r}\), which can be viewed as a two-parameter controlled path in the variables 
$(u,r)$, taking values in the space of real-valued, singly controlled rough paths in the variable 
$s$.\newline
From analogous calculations as the one performed in Lemma \ref{proposition_L_U_controlled} we have 
that 
\begin{equation}\label{formula_K_K}
\begin{aligned}
    & \left(K(s,t) - \tilde K(s,t) \right)\left[ \eta \boxtimes \xi \right]\\
    &= \int_s^t \int_u^t \overline f(s, u, r) d\mathbf{X}_r d\mathbf{X}_u \left[ \eta \right] - \int_s^t \overline f(s,s,r) d\mathbf{X}_r \circ (id_{T^{<\kappa}(V)}  \boxtimes \pi_{\geq 1}) \left[\Delta \eta\right]\\
    & \quad + \overline f (s,s,s) \circ (id_{T^{<\kappa}(V)} \boxtimes \pi_{\geq 1} \boxtimes \pi_{\geq 1}) \left[\Delta^2 \eta\right]  + \int_s^t \overline f (s,u,t)d\mathbf{X}_u \left[ \eta \boxtimes \xi \right] \\
    &\quad - \overline f(s,s,t)\circ (id_{T^{<\kappa}(V)}  \boxtimes \pi_{\geq 1} \boxtimes id_{T^{<\kappa}(V)}) \left[\Delta \eta \boxtimes \xi\right] + \overline f(s,t,t) \circ (id_{T^{<\kappa}(V)}  \boxtimes \pi_{\geq 1} \boxtimes \pi_{\geq 1})[\eta \boxtimes \Delta \xi],
\end{aligned}
\end{equation}
here in the second integral we are considering the map $\overline f(s,s,r)$  to be a one parameter controlled integral in $r$ taking values in the space of singly controlled rough integrals in the variable $s$. Similarly for the third integral, where $\overline f(s,u,t)$ is to be understood as a one parameter controlled path in $u$ taking values in the space of jointly controlled rough paths in the variables $(s,t)$.\newline

Starting from the trace, thanks to the formula \eqref{formula_K_K}, we know that the remainder in the second parameter of $K - \tilde K$ satisfies the following identity
\begin{align*}
R^{2}_{z,t}(s)[\mathbf{1}, \mathbf{1}] &= \int_z^t \int_s^u \overline f(s,u,r) d\mathbf{X}_r d\mathbf{X}_u - \int_s^z \overline f(s,u,z) d\mathbf{X}_u \left[\mathbf{X}^{\ge 1}_{z,t}\right] - \overline f(s,z,z) \left[\mathbf{1} \boxtimes \mathbf{X}^{\geq 1}_{z,t} \boxtimes \mathbf{X}^{\geq 1}_{z,t}\right]\\
&\quad +\int_s^z \overline f(s,u,z) d\mathbf{X}_u \left[\mathbf{X}^{(\kappa)}_{z,t}\right] +  \overline f(s,z,z) \left[\mathbf{1} \boxtimes \mathbf{X}^{(\kappa)}_{z,t} \boxtimes \mathbf{X}^{\geq 1}_{z,t}\right] + \overline f(s,z,z) \left[\mathbf{1} \boxtimes \mathbf{X}^{\geq 1}_{z,t} \boxtimes \mathbf{X}^{(\kappa)}_{z,t}\right],
\end{align*}
where we used the Fubini theorem for rough integrals over a simplex introduced in Section \ref{Simplex_section} to write the first integral.\newline
The regularity of the last three terms and the sewing lemma for one-parameter rough integrals (Section 2 in \cite{cass2022combinatorial}) used jointly with stability of the composition of two-parameter controlled paths with smooth functions (Proposition \ref{stability_proposition}), yields 
\[
\left|R^{2}_{z,t}(s)[\mathbf{1}, \mathbf{1}] \right| \leq C d(K, \tilde{K})\omega_{\mathbf{X}}(z,t)^{p/\kappa},
\]
where $\theta>1$ is as defined in Theorem \ref{stability_theorem_2D}, and $C$ denotes a positive constant depending only on $g,p,\omega_{\mathbf{X}},\|K\|_{\mathcal{D}_{\mathbf{X},\mathbf{X}}},$ and $\|\tilde K\|_{\mathcal{D}_{\mathbf{X},\mathbf{X}}}$. Throughout the proof, $C$ may refer to a different positive constant depending on the same parameters.\newline
The other remainder is 
\begin{align*}
    R^{1}_{w,s}(t)[\mathbf{1},\mathbf{1}]= & \int_s^t \int_u^t \overline f(s,u,r) d\mathbf{X}_r d\mathbf{X}_u  - \int_w^t \int_u^t \overline f(w,u,r) d\mathbf{X}_r d\mathbf{X}_u \left[\mathbf{X}^{< \kappa}_{w,s}\right]\\ 
    & \quad  
    - \int_w^t \overline f(w,w,r) d\mathbf{X}_r \left[\mathbf{X}_{w,s}^{< \kappa }\boxtimes \mathbf{X}^{\ge 1}_{w,s}\right] + \overline f(w,w,w)\left[ \mathbf{X}^{< \kappa}_{w,s} \boxtimes \mathbf{X}^{\ge 1}_{w,s} \boxtimes \mathbf{X}^{\ge 1}_{w,s} \right]\\
    & \quad  
    + \int_w^t \overline f(w,w,r) d\mathbf{X}_r \left[\mathbf{X}_{w,s}^{< \kappa }\boxtimes \mathbf{X}^{(\kappa)}_{w,s}\right] - \overline f(w,w,w)\left[ \mathbf{X}^{< \kappa}_{w,s} \boxtimes \mathbf{X}^{(\kappa)}_{w,s} \boxtimes \mathbf{X}^{\ge 1}_{w,s} \right]\\
    &\quad - \overline f(w,w,w)\left[ \mathbf{X}^{< \kappa}_{w,s} \boxtimes \mathbf{X}^{\ge 1}_{w,s}\boxtimes \mathbf{X}^{(\kappa)}_{w,s}  \right]
\end{align*}
The same sewing lemma argument applies for this term yielding 
\[
\left|R^{1}_{w,s}(t)[\mathbf{1}, \mathbf{1}] \right| \leq C d(K, \tilde{K})\omega_{\mathbf{X}}(w,s)^{\theta}.
\]
For the remainder of remainders for $K - \tilde{K}$, using the definition and identity \eqref{formula_K_K} again, we have the identity
\begin{align*}
& \mathbf{R} \begin{pmatrix} w,s \\ z,t \end{pmatrix}[\mathbf{1}, \mathbf{1}]\\
&= \int_z^t \int_s^u \overline f(s,u,r) d\mathbf{X}_r d\mathbf{X}_u - \int_s^z \overline f(s,u,z) d\mathbf{X}_u \left[\mathbf{X}^{\ge 1}_{z,t}\right] + \int_s^z \overline f(s,u,z) d\mathbf{X}_u \left[\mathbf{X}^{(\kappa)}_{z,t}\right]\\
& \quad - \overline f(s,z,z) \left[\mathbf{1} \boxtimes \mathbf{X}^{\geq 1}_{z,t} \boxtimes \mathbf{X}^{\geq 1}_{z,t}\right] + \overline f(s,z,z) \left[\mathbf{1} \boxtimes \mathbf{X}^{(\kappa)}_{z,t} \boxtimes \mathbf{X}^{\geq 1}_{z,t}\right] + \overline f(s,z,z) \left[\mathbf{1} \boxtimes \mathbf{X}^{\geq 1}_{z,t} \boxtimes \mathbf{X}^{(\kappa)}_{z,t}\right]\\
& \quad - \int_z^t \int_w^u \overline f(w,u,r) d\mathbf{X}_r d\mathbf{X}_u \left[\mathbf{X}^{< \kappa}_{w,s}\right] 
+ \int_z^t  \overline f(w,w,r) d\mathbf{X}_r  \left[\mathbf{X}^{< \kappa}_{w,s} \boxtimes \mathbf{X}^{\geq 1}_{w,s}\right]\\
&\quad -\int_z^t  \overline f(w,w,r) d\mathbf{X}_r  \left[\mathbf{X}^{< \kappa}_{w,s} \boxtimes \mathbf{X}^{(\kappa)}_{w,s}\right] + \int_w^z \overline f(w,u,z) d\mathbf{X}_u [ \mathbf{X}^{< \kappa}_{w,s} \boxtimes \mathbf{X}^{\ge 1}_{z,t}]\\
&\quad 
- \int_w^z \overline f(w,u,z) d\mathbf{X}_u [ \mathbf{X}^{< \kappa}_{w,s} \boxtimes \mathbf{X}^{(\kappa)}_{z,t}]
- \overline{f}(w,w,z)\left[ \mathbf{X}^{< \kappa}_{w,s} \boxtimes \mathbf{X}^{\ge 1}_{w,s} \boxtimes \mathbf{X}^{\ge 1}_{z,t}\right]\\
& \quad + \overline{f}(w,w,z)\left[ \mathbf{X}^{< \kappa}_{w,s} \boxtimes \mathbf{X}^{(\kappa)}_{w,s} \boxtimes \mathbf{X}^{\ge 1}_{z,t}\right] +  \overline{f}(w,w,z)\left[ \mathbf{X}^{< \kappa}_{w,s} \boxtimes \mathbf{X}^{\ge 1}_{w,s} \boxtimes \mathbf{X}^{(\kappa)}_{z,t}\right]\\
& \quad +\overline f(w,z,z) \left[\mathbf{X}^{< \kappa}_{w,s} \boxtimes \mathbf{X}^{\geq 1}_{z,t} \boxtimes \mathbf{X}^{\geq 1}_{z,t}\right] -\overline f(w,z,z) \left[\mathbf{X}^{< \kappa}_{w,s} \boxtimes \mathbf{X}^{(\kappa)}_{z,t} \boxtimes \mathbf{X}^{\geq 1}_{z,t}\right]\\
& \quad -\overline f(w,z,z) \left[\mathbf{X}^{< \kappa}_{w,s} \boxtimes \mathbf{X}^{\geq 1}_{z,t} \boxtimes \mathbf{X}^{(\kappa)}_{z,t}\right] .
\end{align*}
By rearranging the terms above we can express $\mathbf{R}$ as
\begin{align*}
\mathbf{R} \begin{pmatrix} w,s \\ z,t \end{pmatrix}[\mathbf{1}, \mathbf{1}]
&= A\begin{pmatrix} w,s \\ z,t \end{pmatrix} + B\begin{pmatrix} w,s \\ z,t \end{pmatrix} + D_1\begin{pmatrix} w,s \\ z,t \end{pmatrix}\\
& \quad  + D_2\begin{pmatrix} w,s \\ z,t \end{pmatrix} + D_3\begin{pmatrix} w,s \\ z,t \end{pmatrix} + D_4\begin{pmatrix} w,s \\ z,t \end{pmatrix}
\end{align*}
where 
\begin{align*}
A\begin{pmatrix} w,s \\ z,t \end{pmatrix} &:= \int_z^t \int_s^u  \overline f(s,u,r) d\mathbf{X}_r d\mathbf{X}_u  - \int_z^t \int_s^u \overline f(w,u,r) d\mathbf{X}_r d\mathbf{X}_u \left[\mathbf{X}^{< \kappa}_{w,s} \right]  - \int_s^z  \overline f(s,u,z)  d\mathbf{X}_u \left[\mathbf{X}^{\ge 1}_{z,t}\right] \\
&\quad+ \int_s^z  \overline f(w,u,z)  d\mathbf{X}_u \left[ \mathbf{X}^{< \kappa}_{w,s}\boxtimes \mathbf{X}^{\ge 1}_{z,t}  \right] - \overline f(s,z,z) \left[\mathbf{1} \boxtimes \mathbf{X}^{\geq 1}_{z,t} \boxtimes \mathbf{X}^{\geq 1}_{z,t}\right] \\
& \quad + \overline f(w,z,z) \left[\mathbf{X}^{< \kappa}_{w,s} \boxtimes \mathbf{X}^{\geq 1}_{z,t} \boxtimes \mathbf{X}^{\geq 1}_{z,t}\right],\\
B\begin{pmatrix} w,s \\ z,t \end{pmatrix}&:= - \int_z^t \int_w^s \overline f(w,u,r) d\mathbf{X}_r d\mathbf{X}_u \left[\mathbf{X}_{w,s}\right]  + \int_z^t  \overline f(w,w,r) d\mathbf{X}_r  \left[\mathbf{X}^{< \kappa}_{w,s} \boxtimes \mathbf{X}^{\geq 1}_{w,s}\right] 
\\
&\quad+ \int_w^s  \overline f(w,w,r) d\mathbf{X}_r  \left[\mathbf{X}_{w,s} \boxtimes \mathbf{X}^{\geq 1}_{w,s}\right] + \int_w^s \overline f(w,u,z) d\mathbf{X}_u [ \mathbf{X}^{< \kappa}_{w,s} \boxtimes \mathbf{X}^{\ge 1}_{z,t}] \\
&\quad - \overline{f}(w,w,z)\left[ \mathbf{X}_{w,s} \boxtimes \mathbf{X}^{\ge 1}_{w,s} \boxtimes \mathbf{X}^{\ge 1}_{z,t}\right],\\
D_1\begin{pmatrix} w,s \\ z,t \end{pmatrix} &:= \int_s^z \overline f(s,u,z) d\mathbf{X}_u \left[\mathbf{X}^{(\kappa)}_{z,t}\right] - \int_s^z \overline f(w,u,z) d\mathbf{X}_u \left[\mathbf{X}^{< \kappa}_{w,s} \boxtimes \mathbf{X}^{(\kappa)}_{z,t}\right] + \overline f(s,z,z) \left[\mathbf{1} \boxtimes \mathbf{X}^{\geq 1}_{z,t} \boxtimes \mathbf{X}^{(\kappa)}_{z,t}\right]\\
&\quad -\overline f(w,z,z) \left[\mathbf{X}^{< \kappa}_{w,s} \boxtimes \mathbf{X}^{\geq 1}_{z,t} \boxtimes \mathbf{X}^{(\kappa)}_{z,t}\right],\\
D_2\begin{pmatrix} w,s \\ z,t \end{pmatrix} &:= - \int_w^s \overline f(w,u,z) d\mathbf{X}_u \left[\mathbf{X}^{< \kappa}_{w,s} \boxtimes \mathbf{X}^{(\kappa)}_{z,t}\right] + \overline{f}(w,w,z)\left[ \mathbf{X}^{< \kappa}_{w,s} \boxtimes \mathbf{X}^{\ge 1}_{w,s} \boxtimes \mathbf{X}^{(\kappa)}_{z,t}\right],\\
D_3\begin{pmatrix} w,s \\ z,t \end{pmatrix} &:= -\int_z^t  \overline f(w,w,r) d\mathbf{X}_r  \left[\mathbf{X}^{< \kappa}_{w,s} \boxtimes \mathbf{X}^{(\kappa)}_{w,s}\right] +  \overline{f}(w,w,z)\left[ \mathbf{X}^{< \kappa}_{w,s} \boxtimes \mathbf{X}^{(\kappa)}_{w,s} \boxtimes \mathbf{X}^{\ge 1}_{z,t}\right],\\
D_4\begin{pmatrix} w,s \\ z,t \end{pmatrix} &:=  \overline f(s,z,z) \left[\mathbf{1} \boxtimes \mathbf{X}^{(\kappa)}_{z,t} \boxtimes \mathbf{X}^{\geq 1}_{z,t}\right] -\overline f(w,z,z) \left[\mathbf{X}^{< \kappa}_{w,s} \boxtimes \mathbf{X}^{(\kappa)}_{z,t} \boxtimes \mathbf{X}^{\geq 1}_{z,t}\right]
\end{align*}
For the term $A$ we can conclude via a one-parameter sewing argument in the parameter $t$ and stability of composition of two-parameter controlled paths with smooth functions that 
\[
\left|A\begin{pmatrix} w,s \\ z,t \end{pmatrix}\right|
 \leq C d(K, \tilde{K}) \omega_{\mathbf{X}}(w,s)^{\theta} \omega_{\mathbf{X}}(z,t)^{\theta}.\]
For the term $B$ we can use expression (4.13) in \cite{cass2023fubini} and stability of compositions with smooth functions to conclude that 
\[
\left|B\begin{pmatrix} w,s \\ z,t \end{pmatrix}\right|
 \leq C d(K, \tilde{K}) \omega_{\mathbf{X}}(w,s)^{\theta} \omega_{\mathbf{X}}(z,t)^{\theta}.\]
For the term $D_i, i \in [1:3]$, by using the one-parameter sewing lemma and the regularity of $\mathbf{X}^{(\kappa)}$ we recover the estimate 
\[
\left|D_i\begin{pmatrix} w,s \\ z,t \end{pmatrix}\right|
 \leq C d(K, \tilde{K}) \omega_{\mathbf{X}}(w,s)^{p/\kappa} \omega_{\mathbf{X}}(z,t)^{p/\kappa}.\]
For the term $D_4$, the definition of one-parameter controlled path and the regularity of $\mathbf{X}^{(\kappa)}$ allow to recover
\[
\left|D_4\begin{pmatrix} w,s \\ z,t \end{pmatrix}\right|
 \leq C d(K, \tilde{K}) \omega_{\mathbf{X}}(w,s)^{p/\kappa} \omega_{\mathbf{X}}(z,t)^{p/\kappa}.\]
The higher-level terms are handled by analogous estimates, which give
\begin{align*}
    d(K,\tilde{K}) \leq C
    d(K,\tilde{K})  \omega_{\mathbf{X}}(s_0, t_0)^{\frac{1}{p} - \frac{1}{\tilde{p}}}.
\end{align*}
The last term $D_4$ the desired bound 
This is enough to show uniqueness in the simplex $\Delta_{[s_0, t_0]}$ for $t_0-s_0$ small enough. We will denote this unique solution as $Q^*$.\newline
Now consider a value $M > \|Q^*\|_{\mathcal{D}^{\tilde{p}}_{ \mathbf{X}, \mathbf{X}}}$ and consider 
the set
\begin{align*}
&B(\Delta_{[s_0, t_0 + \delta]}, Q^*)\\
&:= \{ Q \in \mathcal{D}^{\tilde{p}}_{\mathbf{X}}(\Delta_{[s_0, t_0 + \delta]}, \mathbb{R}) \, : \, Q_{|\Delta_{[s_0, t_0]}} = Q^*, Q_{r,r} =  K_{r,r}  \text{ for every } r \in [s_0,t_0 + \delta],\, \|Q\|_{ \mathcal{D}^{\tilde{p}}_{ \mathbf{X}, \mathbf{X}}} \leq M \}.
\end{align*}
On such set we define the map
\begin{align*}
\Gamma &: B(\Delta_{[s_0, t_0 + \delta]}, Q^*) \longrightarrow \mathcal{D}^p_{\mathbf{X}}(\Delta_{[s_0, t_0 + \delta]}, \mathbb{R}), \\[4pt]
\Gamma(Q)_{s,t} &=
\begin{cases}
Q^*_{s,t}, & \text{if } (s,t) \in \Delta_{[s_0, t_0]}, \\[6pt]
1 -  I^{\Delta_{[s, t]}}_{\mathbf{X}}(f(Q_{s, \cdot},Q)) & \text{otherwise.}
\end{cases}
\end{align*}
Assume $\tilde{K} \in B(\Delta_{[s_0, t_0 + \delta]}, Q^*)$ is another fixed point for $\Gamma$, then we get
\begin{align*}
&\Gamma(K)_{s,t} - \Gamma(\tilde K)_{s,t}\\
&=
\begin{cases}
\mathbf{0}, & \text{if } (s,t) \in \Delta_{[s_0, t_0]}, \\[6pt]
\displaystyle
 I^{\Delta_{[s,t]}}_{\mathbf{X}}(f(K_{s, \cdot},K) - f(\tilde{K}_{s, \cdot},\tilde{K})) & \text{otherwise.}
\end{cases}
\end{align*}
Focusing on the second case,
we either have $\Delta_{[s,t]}$ or $s > t_0$ or $s < t_0$. The first case produces the estimate  
\begin{align*}
    d(K,\tilde{K}) \leq C
    d(K,\tilde{K})  \omega_{\mathbf{X}}(t_0, t_0+\delta)^{\frac{1}{p} - \frac{1}{\tilde{p}}},
\end{align*}
that can be verified via a totally analogous argument as the one present before in this proof. This is enough to conclude the uniqueness argument in this case.\newline
For the second case, we can use the decomposition $\Delta_{[s,t]} = \Delta_{[s, t_0]} + [t_0, t] \times [t_0, t] + \Delta_{[t_0, t]}$ to
write
\[ \Gamma(K)_{s,t} - \Gamma(\tilde K)_{s,t} = 
 I^{\Delta_{[t_0, t]}}_{\mathbf{X}}(f(K_{s, \cdot},K) - f(\tilde{K}_{s, \cdot},\tilde{K})) +
 I^{{[t_0, t] \times [t_0, t]}}_{\mathbf{X}, \mathbf X}(f(K_{s, \cdot},K) - f(\tilde{K}_{s, \cdot},\tilde{K})), 
\]
where the superscript ${[t_0, \cdot] \times [t_0, \cdot ]}$ is used in the second integral to clarify the domain of integration.
The identity above, in concert with now familiar estimates used previously in this proof, allows to conclude that for $s > t_0$ we have
\begin{align*}
    d(K, \tilde{K}) \leq C
    d(K, \tilde{K})  \omega_{\mathbf{X}}(t_0, t_0 + \delta)^{\frac{1}{p} - \frac{1}{\tilde{p}}}.
\end{align*}
Then by choosing $\delta$ sufficiently small, the inequality above is enough to show uniqueness on the simplex $\Delta_{[s_0, t_0 + \delta]}$. 
\newline
This is enough to conclude: given any two  solutions $K$ and $\tilde{K}$ with $\|K\|_{\mathcal{D}^{\tilde p}_{\mathbf{X}, \mathbf{X}}}, \|\tilde{K}\|_{\mathcal{D}^ {\tilde p}_{\mathbf{X}, \mathbf{X}}} \le M$, we can keep the radius of the ball fixed and iterate the argument to cover the simplex $\Delta_{[s_0, t]}$ for any $t$. Since the argument does not depend on the choice of $s_0$, the proof of uniqueness is complete. The stability estimate now follows from a close variant to the proof presented above. Here, by choosing a partition \(\mathcal P=\{s_i\}_i\) of \([0,T]\) with sufficiently small mesh size, one can show that if \(K\) and \(\tilde K\) denote the solutions of the Schwinger--Dyson kernel equation driven by \(\mathbf X\) and \(\tilde{\mathbf X}\), respectively, then
\[
|K - \tilde{K}|_{\infty, [s_i, s_{i+1}] \times [s_j, s_{j+1}] \cap \Delta_{[0, T]}} \leq C \left( |K - \tilde{K}|_{\infty, [0, s_i] \times [0, s_j] \cap \Delta_{[0, T]}} + \|\mathbf{X} - \tilde{\mathbf{X}}\|_{p; [s_i, s{i+1}]} + \|\mathbf{X} - \tilde{\mathbf{X}}\|_{p; [s_j, s{j+1}]} \right).
\]
The bound above immediately leads to the desired inequality, concluding the proof.
\end{proof}

\subsubsection{The smooth rough path case}\label{sec:smooth_RP_SD}

In the last part of this section, we propose a method for associating a integro-differential equation to the Schwinger–Dyson kernel equation under the additional assumption that $\mathbf{X}$ is a smooth rough path.
The method is based on a combinatorial relationship between the   the trace component of $K$ and the remaining components of the controlled path.
Recall that we can characterize all  the components of the path $K$ via the identity
\begin{equation}\label{gub_representation}
K^{(I, J)}
(s, t) = (-1)^n\sum_{|L| = 0}^\infty i^{
|L|+n + m} \varphi(JLI) S^{(L)}
(\mathbf{X})_{s, t},
\end{equation}
where $I = i_1 \dots i_n$ and  $J = j_1 \dots j_m$ are possibly empty words over an alphabet of size $d$.
The identity above appears in the proof of Theorem \ref{theorem_existence}.
 
We first establish this combinatorial relationship in the case where $X$ is of bounded variation. The general case of a kernel driven by a path $\mathbf{X} \in \Omega G_{p,\omega}([0,T],V)$, which need not be a smooth rough path, then follows by a standard approximation argument, almost identical to that presented in Appendix \ref{appendix_existence_SD}.\newline

Before proving the following combinatorial formula, we introduce, for notational convenience, the set $NC(n)$ of non-crossing partial matchings of length $n$, defined recursively by
\begin{align*}
NC(1) &= \bigl\{\{\{1\}\}\bigr\},\\
NC(n) &= \left\{ \pi \cup \{\{n\}\} : \pi \in NC(n-1) \right\} \\
&\qquad \cup
\left\{ (\pi \setminus \{\{m\}\}) \cup \{\{m,n\}\} : \pi \in NC(n-1),\ m = \max U(\pi) \right\},
\end{align*}
where \(U(\pi)\) denotes the set of unmatched elements of \(\pi\). We also write \(P(\pi)\) for the set of pairs of \(\pi\). If \(U(\pi)=\{U_1<\cdots<U_r\}\), we list its unmatched points in increasing order.

We also recall \cite[Lemma 5.4.7, Remark 5.4.8]{anderson2010introduction} that, for every pair of words $I,J$ constructed from the alphabet $\{1,\dots,d\}$, the moment sequence $\varphi$ satisfies the identities
\begin{align}
\varphi(IJ) &= \varphi(JI), \nonumber\\
\varphi(Ii) &= \sum_{I = K i J} \varphi(K)\varphi(J), \label{SD_prop2}
\end{align}
which are known as the Schwinger–Dyson equations and give the name to the kernel presented in this section.
By the identity above we can see that the formula \eqref{gub_representation} is symmetric in the two parameters, up to a factor  $-1^{|I|}$. Therefore it suffices to derive the formula for the higher levels of $K$ in the second variable; the corresponding identity in the first variable follows by symmetry. For this reason in the next two results we treat $K$ as a one-parameter controlled path.

We are now ready to present the combinatorial relationship that constitutes the first building block of the integro-differential equation associated to the Schwinger-Dyson kernel equation.

\begin{proposition}\label{Gubinelli derivatives}
If $X$ has bounded variation, then for every non-empty word $I = i_1\dots i_n$, $n\leq \kappa-1$:
    \[
K^{(I)}
=
\sum_{\pi \in NC(n)}
\left(
\prod_{(p,q) \in P(\pi)} (-1) \delta_{i_p,i_q}
\right)
K_{\,i_{U_1},\ldots,i_{U_r}},
\]
with the convention $K^{(\emptyset)} = K_\emptyset$ and for a non-empty word $J = j_1 \dots j_m$
\begin{equation}\label{index_removal}
K_{J}(s,t) =   (-1)^{m} \int_{s < u_1 < \dots < u_m <t} K(s, u_1)K(u_1, u_2) \dots K(u_{m-1}, u_{m})K(u_{m}, t) dX^{(j_1)}_{u_1}\dots dX^{(j_m)}_{u_m}.
\end{equation}
\end{proposition}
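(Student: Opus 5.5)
The plan is to argue by induction on $n=|I|$, treating $K(s,\cdot)$ as a one-parameter controlled path in its second variable, as announced before the statement. The inductive step rests on a single differentiation rule for the auxiliary functions $K_J$ of \eqref{index_removal}. Write $D_i$ for the operation sending a component $K^{(I)}$ to the next component $K^{(Ii)}$, i.e.\ the Gubinelli derivative in direction $e_i$ in the variable $t$. Since $X$ has bounded variation, these components can be identified directly from the series representation \eqref{gub_representation}. The recursive description of $NC(n)$ then tells us exactly what must be shown. Appending the letter $i_n$ either leaves $n$ unmatched, or pairs $n$ with the largest unmatched point $m=\max U(\pi)$ with weight $(-1)\delta_{i_m,i_n}$. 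So it suffices to prove, for every word $J=j_1\dots j_m$ and letter $i$,
\begin{equation*}
D_i K_J = K_{Ji} - \delta_{i,j_m}\, K_{j_1\dots j_{m-1}},
\end{equation*}
with $K_{\emptyset}=K$. Applying this rule term by term to the formula for $K^{(i_1\dots i_{n-1})}$ reproduces the sum over $NC(n)$: the first term of the rule gives the partitions $\pi\cup\{\{n\}\}$, and the second gives $(\pi\setminus\{\{m\}\})\cup\{\{m,n\}\}$, since the last index of $K_{i_{U_1},\dots,i_{U_r}}$ is $i_{U_r}$ with $U_r=\max U(\pi)$.

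The base case $n=1$ comes from differentiating \eqref{SD_kernel_equation_BV} in $t$. Only the outer variable $r=t$ contributes at first order, giving
\begin{equation*}
\partial_t K(s,t) = -\int_s^t K(s,u)K(u,t)\,\langle dX_u, \dot X_t\rangle .
\end{equation*}
Hence $K^{(i)}(s,t) = -\int_s^t K(s,u)K(u,t)\,dX^{(i)}_u = K_i(s,t)$, which matches the single partition $\{\{1\}\}\in NC(1)$. The same computation yields the general rule. In \eqref{index_removal} the variable $t$ enters in two places: the upper limit of $u_m$, and the last factor $K(u_m,t)$.
\begin{itemize}
\item Differentiating the upper limit evaluates the integrand at $u_m=t$, where $K(t,t)=1$, against $dX^{(j_m)}_t$. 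Extracting direction $i$ produces $\delta_{i,j_m}$ times $(-1)^m\int K(s,u_1)\cdots K(u_{m-1},t)$, which is $-\delta_{i,j_m}K_{j_1\dots j_{m-1}}$.
\item Differentiating $K(u_m,t)$ uses the base case $K^{(i)}(u_m,t)=-\int_{u_m}^t K(u_m,v)K(v,t)\,dX^{(i)}_v$. Inserting this produces one more integration variable and an extra sign, which is exactly $K_{Ji}$.
\end{itemize}

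The main obstacle is rigour in identifying these formal derivatives with the higher components $K^{(I)}$ of the controlled path. For $n\ge 2$ these are not classical derivatives, even for bounded-variation $X$; they are defined through the expansion \eqref{gub_representation}. I would therefore verify the rule at the level of coefficients. Expand each $K$ factor in \eqref{index_removal} via \eqref{expansion_K} and use Chen's identity on consecutive intervals. This gives
\begin{equation*}
K_J = \sum_W c_J(W)\, S^{(W)}(X)_{s,t},
\end{equation*}
where $c_J(W)$ is the sum over factorisations $W=L_0 j_1 L_1\cdots j_m L_m$ of $(-1)^m i^{|W|-m}\prod_k\varphi(L_k)$. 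On signature coefficients, $D_i$ acts by $S^{(W)}\mapsto S^{(W')}$ when $W=W'i$, and gives zero otherwise. The claimed rule then becomes a finite identity between these coefficients, split according to whether the final letter $i$ is an interior letter of $L_m$ or the marked letter $j_m$ with $L_m=\emptyset$. Finally, I would check consistency with \eqref{gub_representation}: $c_{(I)}$ summed over $NC(n)$ must equal $i^{|W|+n}\varphi(WI)$ up to the sign convention. This is where the Schwinger--Dyson relation \eqref{SD_prop2} enters, used to peel off the last letter of $WI$, together with cyclicity $\varphi(IJ)=\varphi(JI)$. If this bookkeeping becomes unwieldy, the fallback is to use the controlled-path uniqueness of Gubinelli derivatives for bounded-variation drivers, which makes the formal differentiation of the integral legitimate.
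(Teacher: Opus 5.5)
Your proposal is correct and follows essentially the paper's proof. The paper derives the same rule by substituting the base-case identity for $K(u_m,t)$ into \eqref{index_removal}, obtaining $K_J(s,t) = -\int_s^t K_{j_1\dots j_{m-1}}(s,u)\,dX^{(j_m)}_u + \sum_{j=1}^d\int_s^t K_{Jj}(s,u)\,dX^{(j)}_u$; this is the integral form of your two-term differentiation and avoids writing $\dot X_t$ for a merely BV path. It then closes the induction through the recursive construction of $NC(n+1)$ exactly as you do. Your coefficient-level check via the Schwinger--Dyson relation adds rigour the paper omits, since the paper tacitly reads off the higher components as the integrands against $dX$.
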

\begin{proof}
We prove the identity by induction on $I$. 
The base case $I = \emptyset$ follows by definition. \newline
Notice that the case $|I|=1$ follows from equation \eqref{SD_prop2}, in fact, following Theorem 3.10 in \cite{cass2024free} we obtain
\begin{align*}
K(s,t) &= \sum_{|L|=0}^\infty i^{|L|}\varphi(L)S^{(L)}_{s,t} \\
&= 1 + \sum_{|L_1|, |L_2|=0}^\infty i^{|L_1|+|L_2| + 2} \varphi(L_1) \varphi(L_2) S^{(L_1 i_1 L_2 i_1)}_{s,t} \\
&= 1 - \sum_{i=1}^d \int_s^t \int_s^r K(s,u)K(u, r) dX^{(i)}_{u} dX^{(i)}_{r}.
\end{align*}
From the identity above we see that
\[
K^{(i)}(s,t) = -\int_s^t K(s,u)K(u, t) dX^{(i)}_u, 
\]
so that the case $|I|=1$ is satisfied.\newline

For the general induction step, we begin by considering a non-empty word $J = j_1 \dots j_m$ and the definition
\[
K_{J}(s,t) = (-1)^{m} \int_{s < u_1 < \dots < u_m <t} K(s, u_1)K(u_1, u_2) \dots K(u_{m}, t) dX^{(j_1)}_{u_1}\dots dX^{(j_m)}_{u_m},
\]
and apply the identity found for the induction basis to $K(u_m, t)$. From doing this we recover
\begin{align*}
K_{J}(s,t) &= (-1)^{m}
   \int_{s<u_1<\cdots<u_m<t}
   K(s,u_1)\cdots K(u_{m-1},u_{m})\,
   dX^{(j_1)}_{u_1}\cdots dX^{(j_m)}_{u_m} \\
&\quad
   + (-1)^{m+1}
   \sum_{j_{m+1}=1}^d
   \int_s^t
   \int_{s<u_1<\cdots<u_m<u_{m+1}<r}
   K(s,u_1)\cdots K(u_m,u_{m+1}) K(u_{m+1},u) \\
&\hspace{6em}
   dX^{(j_1)}_{u_1}\cdots dX^{(j_{m+1})}_{u_{m+1}}
   \, dX^{(j_{m+1})}_u \\
&=
   - \int_s^t
     K_{j_1\dots j_{m-1}}(s,u)\,
     dX^{(j_m)}_u
   + \sum_{j_{m+1}=1}^d
     \int_s^t
     K_{j_1\dots j_{m+1}}(s,u)\,
     dX^{(j_{m+1})}_u.
\end{align*}
Notice how the case $J = \emptyset$ reduces to the base case just analysed.\newline
Now, consider the combinatorial formula holds for any word $I = i_1...i_n$, plugging the above identities for $K_J$ into the induction hypothesis, and after appropriately relabelling the letters forming $J$, we obtain
\begin{align*}
K^{(I)}
&=
\sum_{\pi \in NC(n)}
\Bigg(
\prod_{(p,q)\in P(\pi)} (-1)\delta_{i_p,i_q}
\Bigg)
\Bigg[
 - \int_s^t
   K_{i_{U_1}\dots i_{U_{r-1}}}(s,u)\,
   dX^{(i_{U_r})}_u  \\
&\qquad\qquad
 + \sum_{i_{U_{r+1}}=1}^d
   \int_s^t
   K_{i_{U_1}\dots i_{U_{r+1}}}(s,u)\,
   dX^{(i_{U_{r+1}})}_u
\Bigg].
\end{align*}

The non-matching partial matchings of order $n+1$ can be constructed from $NC(n)$. In fact, according to the definition for $NC(n+1)$, this can be done for every  $\pi \in NC(n)$ by either leaving the last index unpaired or pairing it with the last unmatched index. This implies that we can rewrite the integrand with respect to $X^{(i_{n+1})}$ in the previous expression as
    \[
K^{(Ii_{n+1})}
=
\sum_{\pi \in NC(n+1)}
\left(
\prod_{(p,q) \in P(\pi)}  (-1)\delta_{i_p,i_q}
\right)
K_{\,i_{U_1},\ldots,i_{U_r}},
\]
concluding the proof.
\end{proof}

\begin{remark}\label{remark_smooth_numerical}
Observe that, by the definition in \eqref{index_removal} and the representation \eqref{gub_representation}, it follows immediately that for every word $I$, $K_I \in \mathcal{D}^p_{\mathbf{X}}(\Delta_{[0,T]}, \mathbb{R})$.
    This implies that the argument in previous proposition can be used to compute the higher levels of this term, $K^{^{(i_{n+1} \dots i_\ell)}}_{i_1 \dots i_n}$ for an arbitrary word $i_1 \dots i_n$ and $i_{n+1} \dots i_\ell$ with adequate maximum length. We have
    \begin{align*}
        K_{i_1,\ldots,i_{n}}^{(i_{n+1} \dots i_\ell)} = \sum_{\pi \in NC(\ell)_n}
\left(
\prod_{(p,q) \in P(\pi)} (-1) \delta_{i_p,i_q}
\right)
K_{\,i_{U_1},\ldots,i_{U_m}},
    \end{align*}
where \(NC(\ell)_n\) denotes the set of non-crossing partial matchings on \(\{1,\dots,\ell\}\) such that no pair joins two indices in \(\{1,\dots,n\}\).
\end{remark}
Following the convention introduced for the components of a controlled rough path, we denote by $K_n$ the level-wise analogue of the coordinate-wise notation $K_I$, where $n \in \mathbb{N}$ and $I$ is a word of length $|I| = n$. More precisely 
\[
K_n: \Delta_{[0, T]} \to \mathrm{Hom}(V^{\otimes n}, \mathbb{R}), \qquad K_n(s,t)[e_I] = K_I(s,t)\, \text{for }|I|=n .
\]
\newline
To distinguish the jointly controlled rough path $K$ from the auxiliary process with coordinates $K_I$ we will use the notation $K_\bullet$ to denote the latter.\newline
The proposition above and the subsequent remark lead immediately to the next proposition, that gives us the system used to construct the integro-differential equation associated to the Schwinger-Dyson kernel equation when $\mathbf{X}$ is a smooth rough path.

\begin{proposition}\label{numerical_scheme_rep}
For every \(n\in [1:\kappa+\zeta]\), let \(\ell=\lceil n/2\rceil\). Then the homogeneous components of the Schwinger--Dyson kernel satisfy
\begin{equation}\label{alternative_representation_levelwise}
K_0(s,t)=1 +  \int_s^t  K_{1}(s,u) d\mathbf{X}_u du,
\qquad
K_n(s,t)
=
-\int_s^t \sigma_n\bigl(K_{\ell-1}(s, \cdot),K_{n-\ell}(\cdot, t)\bigr)_u\,d\mathbf X_u .
\end{equation}
Here, for every word \(I=i_1\cdots i_n\) over the alphabet \(\{1,\dots,d\}\), we set
\(
\tilde I:=i_1\cdots i_{\ell-1},\,
\hat I:=i_{\ell+1}\cdots i_n,
\)
and,given a basis \(e_1,\dots,e_d\) of \(V\), define
\begin{align*}
&\sigma_n:
\mathrm{Hom}\bigl(V^{\otimes(\ell-1)},\mathbb R\bigr)\boxtimes
\mathrm{Hom}\bigl(V^{\otimes(n-\ell)},\mathbb R\bigr)
\to
\mathrm{Hom}\bigl(V,\mathrm{Hom}(V^{\otimes n},\mathbb R)\bigr),\\
&\sigma_n(\eta,\xi)[e_j](e_I)
=
\sigma_n(\eta,\xi)[e_j](e_{\tilde I}\otimes e_{i_\ell}\otimes e_{\hat I})
=
\eta(e_{\tilde I})\,\delta_{i_\ell j}\,\xi(e_{\hat I}),
\end{align*}
for every word \(I\) of length \(n\), and extend linearly on \(V^{\otimes n}\).
\end{proposition}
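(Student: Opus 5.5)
The plan is to derive \eqref{alternative_representation_levelwise} from the case $|I|=1$ of Proposition \ref{Gubinelli derivatives}, from Remark \ref{remark_smooth_numerical}, and from the splitting of the iterated integrals \eqref{index_removal} at a middle time. I first treat a bounded-variation (or smooth) $X$. Then I pass to a general smooth rough path by the approximation argument of Appendix \ref{appendix_existence_SD}, using that both sides depend continuously on $\mathbf X$ through the expansion \eqref{gub_representation}.

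For $n=0$ the identity is the base step of Proposition \ref{Gubinelli derivatives}. There $K(s,t)=1-\sum_i\int_s^t\int_s^r K(s,u)K(u,r)\,dX^{(i)}_u dX^{(i)}_r$, and $K^{(i)}(s,t)=-\int_s^t K(s,u)K(u,t)\,dX^{(i)}_u$. I read $K_1$ as the level-one component of $K$ viewed as a controlled path in the second variable. The identity $K_0(s,t)=1+\int_s^t K_1(s,u)\,d\mathbf X_u$ then follows by renaming $r\to u$ in the outer integral and recognising the inner integral as $K_1(s,u)$ contracted against $dX_u$. In the smooth rough path case this becomes the Lebesgue integral against the diagonal derivative $\boldsymbol{x}_u\,du$, as written in the statement.

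For $n\ge 1$ I use the representation \eqref{gub_representation}, $K^{(I)}(s,t)=\sum_L i^{|L|+n}\varphi(LI)S^{(L)}_{s,t}$ up to sign. Fix $\ell=\lceil n/2\rceil$ and write $I=\tilde I\, i_\ell\, \hat I$. The key combinatorial step is the second Schwinger–Dyson identity \eqref{SD_prop2}, together with cyclicity $\varphi(IJ)=\varphi(JI)$. I rotate the word so that the letter $i_\ell$, or the last letter of $L$ matched with it, is in final position. I then split $\varphi(L\tilde I i_\ell \hat I)$ according to the pairing of the letter $i_\ell$. Non-crossing pairings force that letter to be paired with a letter $j$ of $L$, with $L=L_1 j L_2$. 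This gives $\varphi(L\tilde I i_\ell\hat I)=\sum_{L=L_1jL_2}\delta_{j i_\ell}\varphi(\hat I L_1)\varphi(L_2\tilde I)$, up to the ordering conventions. Inserting this into the signature sum and using Chen's identity in the form $S^{(L_1 j L_2)}_{s,t}=\int_s^t S^{(L_1)}_{s,u}\,dX^{(j)}_u\, S^{(L_2)}_{u,t}$ factorises the double sum. One factor is $K^{(\tilde I)}$-type on $(s,u)$, a level $\ell-1$ component in the second variable. The other is $K^{(\hat I)}$-type on $(u,t)$, a level $n-\ell$ component in the first variable, which by the symmetry remark is the second-variable formula up to $(-1)^{|\hat I|}$. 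After tracking the powers of $i$, this yields exactly $-\int_s^t \sigma_n(K_{\ell-1}(s,\cdot),K_{n-\ell}(\cdot,t))_u\,d\mathbf X_u$.

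The main obstacle is the bookkeeping in this splitting step. Three things must be checked: that the signs $i^{|L|+n}$ and $(-1)^{|\hat I|}$ combine to the overall minus sign, that the choice $\ell=\lceil n/2\rceil$ is consistent with which letter is peeled off, and that the truncation $n\le\kappa+\zeta$ is compatible with the levels on which each factor is defined. I would first verify the formula for $n=1,2$ by hand against Proposition \ref{Gubinelli derivatives} and Remark \ref{remark_smooth_numerical}, whose non-crossing matching expansion gives an independent check of the sign. I would then carry out the general case. Interchanging the infinite sums with the integral is justified by the factorial decay in the definition of the signature.
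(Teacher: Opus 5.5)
There is a genuine gap: your argument concerns the wrong objects. In the statement, $K_n$ is not the level-$n$ component of the controlled path $K$. It is the level-wise version of the auxiliary process $K_\bullet$, namely $K_n(s,t)[e_I]=K_I(s,t)$, where $K_I$ is the iterated integral \eqref{index_removal}; the paper deliberately keeps $K_\bullet$ separate from the jointly controlled path $K$. Your main step instead expands the controlled components $K^{(I)}$ through \eqref{gub_representation}. It then relies on the claim that non-crossing pairings force the letter $i_\ell$ to be matched with a letter of $L$. That claim is false: $i_\ell$ can also be matched with an equal letter of $\tilde I$ or $\hat I$. These matchings are exactly the extra terms that separate $K^{(I)}$ from $K_I$ in Proposition~\ref{Gubinelli derivatives}. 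Take $n=2$, $I=ii$, $\ell=1$. Matching the two letters of $I$ contributes $\varphi(L)$, and in particular $K^{(ii)}(s,s)=i^2\varphi(ii)=-1$, whereas the right-hand side $-\int_s^s(\cdots)$ vanishes. So the identity you set out to prove is false for controlled components; the hand check at $n=2$ you planned would have exposed this. Likewise, $K_{n-\ell}(\cdot,t)$ is just $K_\bullet$ evaluated at $(u,t)$, not a first-variable component, so no $(-1)^{|\hat I|}$ symmetry is involved.

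No Schwinger--Dyson combinatorics is needed. The paper's argument is precisely the splitting at a middle time that you mention but never carry out. For bounded-variation $X$, take $J=I$ in \eqref{index_removal} and integrate first over every variable except $u:=u_\ell$.
The integral of $K(s,u_1)\cdots K(u_{\ell-1},u)$ over $\{s<u_1<\dots<u_{\ell-1}<u\}$, against $dX^{(i_1)}_{u_1}\cdots dX^{(i_{\ell-1})}_{u_{\ell-1}}$, equals $(-1)^{\ell-1}K_{\tilde I}(s,u)$.
The integral of $K(u,u_{\ell+1})\cdots K(u_n,t)$ over $\{u<u_{\ell+1}<\dots<u_n<t\}$ equals $(-1)^{n-\ell}K_{\hat I}(u,t)$.
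Since $(-1)^n=-(-1)^{\ell-1}(-1)^{n-\ell}$, this gives $K_I(s,t)=-\int_s^t K_{\tilde I}(s,u)\,K_{\hat I}(u,t)\,dX^{(i_\ell)}_u$, which is the claimed identity evaluated on $e_I$.
The identity holds for every $\ell\in[1:n]$; the choice $\lceil n/2\rceil$ only serves to balance the numerical scheme. The rough case then follows by the approximation argument of Appendix~\ref{appendix_existence_SD}. Your treatment of the $K_0$ identity is fine, since $K_1=K^{(1)}$ by the case $|I|=1$ of Proposition~\ref{Gubinelli derivatives}.
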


\begin{proof}
The first identity follows from the same approximation argument used in the proof of Theorem~\ref{theorem_existence}.

For the second identity, we first assume that $X$ has bounded variation. Let $J = j_1 \dots j_n$ be a non-empty word. By the definition of $K_J$ in Proposition~\ref{Gubinelli derivatives}, we have
\begin{align*}
K_{J}(s,t) 
&= (-1)^{n} \int_{s < u_1 < \dots < u_n < t} 
K(s, u_1) K(u_1, u_2) \dots K(u_{n-1}, u_n) K(u_n, t)\,
dX^{(j_1)}_{u_1} \dots dX^{(j_n)}_{u_n}.
\end{align*}
Setting $\ell = \lceil n/2 \rceil$, and using the definitions of 
$K_{j_1 \dots j_{\ell-1}}(s,u)$ and $K_{j_{\ell+1} \dots j_n}(u,t)$, this expression can be rewritten as
\[
K_{J}(s,t)
= - \int_s^t K_{j_1 \dots j_{\ell-1}}(s,u)\,
K_{j_{\ell+1} \dots j_n}(u,t)\, dX^{(j_\ell)}_u.
\]

The general case $\mathbf{X} \in \Omega G_p([0, T], V)$ then follows by a standard approximation argument.
\end{proof}

With these results at hand we are now ready to finalise the construction of the integro-differential equation.\newline
We begin by fixing a non-negative integer $\zeta$, which determines the level of truncation in the tensor algebra $T^{\le \kappa + \zeta}(V)$ where the solution $K$ takes values. To ensure consistency of the scheme, we require $\zeta$ to satisfy
\[
\max\left(\floor{\frac{\kappa + \zeta}{2}}, 1\right) + \kappa - 1 \le \kappa + \zeta,
\]
which is equivalent to
\[
\zeta \ge
\begin{cases}
\kappa - 3, & \text{if } \kappa \ge 4,\\
0, & \text{otherwise}.
\end{cases}
\]

As we will see, this choice guarantees that the system of equations presented later in this part of the work is closed. \newline
Proposition \ref{Gubinelli derivatives} and the following remark allows us to define the linear maps that will constitute the vector fields appearing in the equation
\[
\begin{aligned}
    \psi&: \Delta_{[0, T]} \to \mathrm{Hom}( T^{\le \ceil{\frac{ \kappa + \zeta}{2}}-1}(V), \mathrm{Hom}(T^{\le \kappa}, \mathbb{R})), \\
    \phi&: \Delta_{[0, T]} \to \mathrm{Hom}( T^{\le \kappa + \zeta - \ceil{\frac{ \kappa + \zeta}{2}}}(V), \mathrm{Hom}(T^{\le \kappa}, \mathbb{R})),
\end{aligned}
\]
where, for every word $I$, $J$ of admissible lengths, the following holds for a fixed basis of $V$
\begin{align*}
\psi_{s,t}(e_I)[e_J]  &= 
\sum_{\pi \in NC(IJ)_{|I|}}
\left(
\prod_{(p,q) \in P(\pi)} (-1) \delta_{i_p,i_q}
\right)
K(s,t)[{e_{\,i_{U_1},\ldots,i_{U_m}}}],\\
\phi_{s,t}(e_I)[e_J]  &= (-1)^{|J|}
\sum_{\pi \in NC(IJ)_{|I|}}
\left(
\prod_{(p,q) \in P(\pi)} (-1) \delta_{i_p,i_q}
\right)
K(s,t)[{e_{\,i_{U_1},\ldots,i_{U_m}}}].  
\end{align*}

Under the hypothesis that $\mathbf{X}$ is a smooth path, we can then use the expressions above to rephrase 
equation \eqref{alternative_representation_levelwise} as 
\begin{equation*}
\begin{aligned}
K_{0}(s,t) 
&= 1 +  \int_s^t  K_{1}(s,u) [\boldsymbol{x}_u]  du,\\
K_{n}(s,t) 
&= -  \int_s^t \sigma_n( \psi_{s, \cdot}, \phi_{\cdot, t})_u  [\boldsymbol{x}_u]  du.
\end{aligned}
\end{equation*}

From these identities, by taking derivatives in the two parameters, we can then recover the following result
\begin{lemma}
  Let $\mathbf{X} \in \Omega G([0,T], V)$ be a smooth rough path. Then the Schwinger-Dyson kernel equation satisfies the integro-differential equation
  \begin{align*}
      \frac{\partial^2 K_{0}}{\partial s \partial t} &=  \frac{\partial K_{1}(s,t) }{\partial s} [\boldsymbol{x}_t], \\
      \frac{\partial^2 K_{n}}{\partial s \partial t} &= \frac{\partial \sigma_n( \psi_{s, \cdot}, \phi_{\cdot, t})_s}{\partial t}\,[\boldsymbol{x}_s] - \frac{\partial \sigma_n( \psi_{s, \cdot}, \phi_{\cdot, t})_t}{\partial s} [\boldsymbol{x}_t] \\
      &\qquad - \int_s^t \frac{\partial^2 \sigma_n( \psi_{s, \cdot}, \phi_{\cdot, t})_u}{\partial s \partial t} [\boldsymbol{x}_u] du,
  \end{align*}
  with conditions $K_{\bullet}(s,s) = id_{T^{\leq \kappa + \zeta}(V)}$ for every $s \in [0, T]$.
\end{lemma}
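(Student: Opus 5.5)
The plan is to differentiate, in both simplex variables, the two integral identities displayed just before the lemma. These identities already hold for a smooth rough path $\mathbf{X}$ by Proposition~\ref{numerical_scheme_rep} together with the definitions of $\psi$, $\phi$ and $\sigma_n$.

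\emph{The equation for $K_0$.} Start from $K_0(s,t)=1+\int_s^t K_1(s,u)[\boldsymbol{x}_u]\,du$. Differentiating in $t$ gives $\partial_t K_0(s,t)=K_1(s,t)[\boldsymbol{x}_t]$. Differentiating this in $s$ gives $\partial_s\partial_t K_0=\partial_s K_1(s,t)[\boldsymbol{x}_t]$, because $\boldsymbol{x}_t$ does not depend on $s$. Two things must be justified here:
\begin{itemize}
\item[(i)] $u\mapsto K_1(s,u)[\boldsymbol{x}_u]$ is continuous, so the fundamental theorem of calculus applies;
\item[(ii)] $s\mapsto K_1(s,t)$ is $C^1$.
\end{itemize}
For both I would use the series representation \eqref{gub_representation}. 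Each coordinate $S^{(L)}(\mathbf{X})_{s,t}$ is $C^1$ in each endpoint when $\mathbf{X}$ is smooth, via the diagonal derivative $\boldsymbol{x}$ and multiplicativity. The factorial decay $\omega^{n/p}/\Gamma(n/p+1)$, combined with the Catalan-type growth of $\varphi$, makes the differentiated series converge uniformly, so term-by-term differentiation is legitimate.

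\emph{The equation for $K_n$, $n\ge 1$.} Write $G(s,t,u):=\sigma_n(\psi_{s,\cdot},\phi_{\cdot,t})_u[\boldsymbol{x}_u]$, so that $K_n(s,t)=-\int_s^t G(s,t,u)\,du$. The Leibniz rule for integrals with variable limits gives
\[
\partial_t K_n=-G(s,t,t)-\int_s^t\partial_t G(s,t,u)\,du .
\]
Applying it again in $s$,
\[
\partial_s\partial_t K_n=-\partial_s\bigl[G(s,t,t)\bigr]+\partial_t G(s,t,s)-\int_s^t\partial_s\partial_t G(s,t,u)\,du .
\]
Here $\partial_s[G(s,t,t)]$ differentiates only the $s$-dependence through $\psi_{s,\cdot}$. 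These three terms are exactly the three terms in the statement.

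The regularity required is that $G$ is $C^{1,1}$ in $(s,t)$, jointly continuous in $u$. Since $\sigma_n$ is bilinear, this reduces to showing that $\psi_{s,u}$ is $C^1$ in $s$ and $\phi_{u,t}$ is $C^1$ in $t$. Both are finite linear combinations of the auxiliary components $K_I$, by Remark~\ref{remark_smooth_numerical} and the definition of $\psi$ and $\phi$ through non-crossing partial matchings. Each $K_I$ is again a convergent signature series of the form \eqref{gub_representation}, so the same uniform-convergence argument gives the needed differentiability.

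\emph{Closure and boundary conditions.} The constraint on $\zeta$ guarantees that every word entering $\psi$ and $\phi$ has length at most $\kappa+\zeta$, so the system is closed in $T^{\le\kappa+\zeta}(V)$. On the boundary, $S(\mathbf{X})_{s,s}=\mathbf{1}$ means only the empty word $L$ contributes in \eqref{gub_representation}. Hence $K_0(s,s)=1$ and $K_n(s,s)=0$ for $n\ge 1$, which is $K_\bullet(s,s)=id$ after identifying the components level-wise.

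\emph{Main obstacle.} I expect the hard step to be the justification of term-by-term differentiation of the signature series in both endpoints with uniform bounds. I would first try to bound $|\partial_t S^{(L)}_{s,t}|\le|\boldsymbol{x}_t|\,C^{|L|}\omega(s,t)^{(|L|-\kappa)/p}/\Gamma(\cdot)$ using $\partial_t S_{s,t}=S_{s,t}\otimes\boldsymbol{x}_t$. If that is too delicate, the fallback is to argue via a bounded-variation approximation with uniform $C^1$ control, as in Appendix~\ref{appendix_existence_SD}.
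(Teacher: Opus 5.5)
Your route is the paper's. The paper's entire argument is to differentiate the two displayed identities $K_0(s,t)=1+\int_s^t K_1(s,u)[\boldsymbol{x}_u]\,du$ and $K_n(s,t)=-\int_s^t\sigma_n(\psi_{s,\cdot},\phi_{\cdot,t})_u[\boldsymbol{x}_u]\,du$ in $t$ and then in $s$. Your Leibniz computation reproduces the three terms of the statement with the correct signs. Your justification of the needed $C^1$ regularity (termwise differentiation using $\partial_t S(\mathbf{X})_{s,t}=S(\mathbf{X})_{s,t}\otimes\boldsymbol{x}_t$ and factorial decay) is detail the paper leaves implicit.

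One step is wrong as written: the boundary condition. The representation \eqref{gub_representation} describes the components $K^{(I,J)}$ of the controlled path $K$. It does not describe the auxiliary processes $K_I$ that make up $K_\bullet$. On the diagonal it gives $K^{(I,J)}(s,s)=(-1)^{|I|}i^{|I|+|J|}\varphi(JI)$, with $i$ the imaginary unit, and this is generally nonzero. For instance, for the word $J=jj$ one gets $K^{(\emptyset,J)}(s,s)=i^{2}\varphi(jj)=-1$. So ``only the empty word contributes'' does not yield $K_n(s,s)=0$.

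The correct reason is that $K_J(s,s)$ in \eqref{index_removal} is an integral over an empty simplex. Equivalently, the integral identity for $K_n$ vanishes at $t=s$.

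If you want a series for $K_J$, expand each factor $K$ in \eqref{index_removal}:
$K_J(s,t)=(-1)^m\sum_{L_0,\dots,L_m}i^{|L_0|+\cdots+|L_m|}\varphi(L_0)\cdots\varphi(L_m)\,S^{(L_0j_1L_1\cdots j_mL_m)}(\mathbf{X})_{s,t}$.
Every word in this sum has length at least $m\ge 1$, which gives the boundary value directly. This, rather than \eqref{gub_representation}, is also the series your regularity argument should be applied to. It converges and can be differentiated termwise by the same factorial-decay estimate, since the number of ways to split a word of length $N$ in this form grows only polynomially in $N$.
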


\begin{remark}
It is now clear that extending the dimension of the tensor algebra in which the solution takes values is unavoidable in this system. Indeed, in order to close the system, one must be able to compute the equations satisfied by the components at level $\kappa + \zeta$.
\end{remark}

\section{Numerical method for Schwinger-Dyson kernel}\label{numerical_method_section}

\subsection{Methodology}
The setup constructed in the previous section, alongside Propositions \ref{Gubinelli derivatives} and \ref{numerical_scheme_rep}, provides a natural framework for constructing a numerical scheme for the Schwinger-Dyson kernel equation. This scheme is defined on the grid $\mathcal{P}^2 \cap \Delta_{[s,t]}$, where $\mathcal{P}$ is a finite partition of $[0,T]$ and $\mathcal{P}^2 := \mathcal{P} \times \mathcal{P}$. In what follows, for any indices \(i,j\) such that \((t_i,t_j)\in \mathcal{P}^2 \cap \Delta_{[s,t]}\), we will use the shorthand $K_\bullet(i,j) = K_\bullet(t_i, t_j)$. We recall that the initial conditions are
\[
K_{\emptyset}(i,i) = 1, \qquad K_n(i,i) = 0 \quad \text{for all } i \in \mathcal{P},\, n \in [1: \kappa +  \zeta],
\]
where $\zeta$ is defined as in subsection \ref{sec:smooth_RP_SD}. Let $D$ the total dimension of the tensor algebra $T^{\le \kappa + \zeta}(V) \equiv T^{\le \kappa + \zeta}(\mathbb{R}^d)$.\newline
To update the value $K_\bullet(i,j+1)$, we can leverage the identities in Proposition~\ref{numerical_scheme_rep}. Explicitly, for the level-$0$ component, we set
\begin{align} \label{trace_numerical}
K_\emptyset(i,j+1)
= 1 + \sum_{|\omega| = 1}^{\kappa} \sum_{m=i+1}^{j+1} 
K^{(\omega)}(i,m)\, \mathbf{X}^{(\omega)}_{m-1,m},
\end{align}
where the first sum is to be taken over words $\omega$ from an alphabet of size $d$. 

For $I = i_1 \dots i_n$, $|I| \geq 1$, with $\ell = \ceil{n/2}$, $\tilde{I} = i_1 \dots i_{\ell -1}$, and $\hat{I} = i_{\ell + 1} \dots i_{n}$, we have
\[
K_{I}(i,j+1)
= \sum_{m=i+1}^{j+1}
\sum_{|\omega| = 0}^{\kappa-1}
\sum_{(\alpha,\beta)\in Sh^{-1}(\omega)}
(-1)^{|\beta| + 1}
K^{(\alpha)}_{\tilde{I}}(i,m)\,
K^{(\beta)}_{\hat{I}}(m,j+1)\,
\mathbf{X}^{(i_\ell\omega)}_{m-1,m},
\]
where $K^{(\alpha)}_I, K^{(\beta)}_{\hat{I}}$ are defined as in Remark \ref{remark_smooth_numerical} and the operation \(Sh^{-1}(\omega)\) denotes the inverse shuffle of the word \(\omega\). More precisely, for a word \(I=i_1\cdots i_n\), we define
\[
Sh^{-1}(I)
:=
\{(J,L):\ I\in J \Delta^{*} L\},
\]
where \(\Delta^{*}\) denotes the product dual to the shuffle coproduct defined in Section \ref{Simplex_section}.\newline
The expression above can be further decomposed as
\begin{align}
K_{I}(i,j+1)
&= \sum_{m=i+1}^{j}
\sum_{|\omega| = 0}^{\kappa-1}
\sum_{(\alpha,\beta)\in Sh^{-1}(\omega)}
(-1)^{|\beta| + 1}
K^{(\alpha)}_{\tilde{I}}(i,m)\,
K^{(\beta)}_{\hat{I}}(m,j+1)\,
\mathbf{X}^{(i_\ell\omega)}_{m-1,m}
\label{term_b}\\
&\quad + \sum_{|\omega| = 0}^{\kappa-1}
\sum_{(\alpha,\beta)\in Sh^{-1}(\omega)}
(-1)^{|\beta| + 1}
K^{(\alpha)}_{\tilde{I}}(i,j+1)\,
K^{(\beta)}_{\hat{I}}(j+1,j+1)\,
\mathbf{X}^{(i_\ell\omega)}_{j,j+1}.
\label{term_A}
\end{align}

Using the combinatorial identities of Proposition~\ref{Gubinelli derivatives} and the subsequent remark to rewrite $K^{(\omega)}, K^{(\alpha)}_I$ and $K^{(\beta)}_{\hat{I}}$ in terms of elements of $K_\bullet$, the relations \eqref{trace_numerical}, \eqref{term_b} and \eqref{term_A} can be restated in the compact form
\[
(Id_D - A_j) K_{\bullet} = b_{i,j},
\]
so that the numerical solution is obtained by solving a linear system where $b_{i,j} \in \mathbb{R}^D$ and $A_j \in \mathbb{R}^{D \times D}$. Note that the matrix $A_j$ does not depend on $i$, due to the initial conditions imposed on the scheme.\newline

We conclude by presenting the runtime and memory costs of the algorithm, which we derive in detail in Appendix~\ref{appendix:algorithmic_complexity}.
Let \(M>N\in\mathbb N\). We assume that the driving path is sampled on a fine dyadic grid with \(2^M\) points, while the numerical scheme is implemented on a coarser dyadic grid with \(2^N\) points. Therefore, each increment of the rough path on the coarse grid is obtained by combining \(2^{M-N}\) fine-grid increments of the sampled path. Finally let $\tilde{D}$ be the total dimension of the tensor algebra $T^{\le \kappa }(\mathbb{R}^d)$. Then, the algorithm above has runtime complexity
\[
\mathrm{Cost}_{\mathrm{runtime}} \leq C\left(
2^{3N}\,(2d)^{\kappa + \zeta -1} (d+1)^\kappa \;+\; 2^{2N} D^3 + \tilde{D}(2^N - 1)d^{\tilde{D}} 2^{M-N}
\right),
\]
and memory complexity
\[
\mathrm{Memory} \leq C\left(2^{2N} D\right),
\]
where \(C>0\) is a constant independent of all the parameters appearing above.

\subsection{Numerical experiments}
In this part of the work, we perform numerical experiments using the methods developed above.

The first experiment investigates how different choices of \(\kappa\) affect the performance of the algorithm as the regularity of the driving signal decreases. For a given choice of \(\kappa\), we denote the corresponding method by \(SDK\kappa\).

We also include in the comparison the randomized CDE method for the Schwinger--Dyson kernel, denoted by \(SDKr\), introduced in \cite{cass2024free} for the computation of the kernel in the bounded-variation setting. The parameters used for this method are matrix dimension \(200\) and \(250\) Monte Carlo simulations.

Simulated paths of three-dimensional fractional Brownian motion on \([0,1]\) with various Hurst indices \(H\) serve as input to the different numerical methods. Since no closed-form solution is available, the discrepancies reported below should be interpreted as empirical measures of agreement between numerical schemes, rather than as errors with respect to an exact reference solution. More precisely, for each of \(50\) independent sample paths of the driving noise, each consisting of \(4096\) piecewise-interpolated increments, we compute the corresponding numerical solutions and, for each pair of methods, evaluate the terminal-time discrepancy
\[
\left|K^{(\emptyset),\text{method 1}}_{0,1}-K^{(\emptyset),\text{method 2}}_{0,1}\right|.
\]
We then report the mean absolute error and the standard deviation of this quantity over the \(50\) sample paths. For the methods \(SDK2\) and \(SDK3\), the rough path is constructed over blocks of \(32\) increments. This procedure provides a simple terminal-time diagnostic for assessing inter-method consistency and for quantifying how decreasing input regularity affects the discrepancy between schemes.

We emphasize that some of the schemes considered below are tested on driving signals whose regularity lies outside the regime in which the corresponding methods are theoretically justified. In such cases, these schemes are used only as heuristic numerical benchmarks. Therefore, the observed agreement or discrepancy between methods should be interpreted as an empirical comparison, and not as a conclusion supported by the available convergence theory.

\begin{table}[H]
\centering
\begin{tabular}{c|c|c|c}
\hline
$H$ & Method pair & MAE & STD \\
\hline
\multirow{6}{*}{0.85} 
 & SDK3 vs SDK2 & $2.02\times 10^{-11}$ & $1.76\times 10^{-11}$ \\
 & SDK3 vs SDK1 & $5.95\times 10^{-6}$  & $1.21\times 10^{-6}$ \\
 & SDK3 vs SDKr & $2.38\times 10^{-5}$  & $4.85\times 10^{-6}$ \\
 & SDK2 vs SDK1 & $5.95\times 10^{-6}$  & $1.21\times 10^{-6}$ \\
 & SDK2 vs SDKr & $2.38\times 10^{-5}$  & $4.85\times 10^{-6}$ \\
 & SDK1 vs SDKr & $1.79\times 10^{-5}$  & $3.64\times 10^{-6}$ \\
\hline
\multirow{6}{*}{0.50} 
 & SDK3 vs SDK2 & $2.68\times 10^{-9}$  & $5.97\times 10^{-10}$ \\
 & SDK3 vs SDK1 & $1.84\times 10^{-4}$  & $1.22\times 10^{-5}$ \\
 & SDK3 vs SDKr & $7.36\times 10^{-4}$  & $4.89\times 10^{-5}$ \\
 & SDK2 vs SDK1 & $1.84\times 10^{-4}$  & $1.22\times 10^{-5}$ \\
 & SDK2 vs SDKr & $7.36\times 10^{-4}$  & $4.89\times 10^{-5}$ \\
 & SDK1 vs SDKr & $5.52\times 10^{-4}$  & $3.67\times 10^{-5}$ \\
\hline
\multirow{6}{*}{0.255} 
 & SDK3 vs SDK2 & $2.91\times 10^{-7}$  & $4.41\times 10^{-8}$ \\
 & SDK3 vs SDK1 & $1.98\times 10^{-3}$  & $1.57\times 10^{-4}$ \\
 & SDK3 vs SDKr & $7.96\times 10^{-3}$  & $6.32\times 10^{-4}$ \\
 & SDK2 vs SDK1 & $1.98\times 10^{-3}$  & $1.57\times 10^{-4}$ \\
 & SDK2 vs SDKr & $7.96\times 10^{-3}$  & $6.32\times 10^{-4}$ \\
 & SDK1 vs SDKr & $5.97\times 10^{-3}$  & $4.75\times 10^{-4}$ \\
\hline
\end{tabular}
\caption{Pairwise MAE and STD between the numerical methods for different Hurst parameters \(H\).}
\label{tab:method_errors_updated}
\end{table}

From Table~\ref{tab:method_errors_updated}, we observe that, as the regularity of the input signal decreases, both the MAE and the STD between the different methods increase. We also note that the discrepancy between \(SDK3\) and \(SDK2\) is smaller than the discrepancy between either of these methods and the remaining ones.

Moreover, when the Hurst index decreases from \(0.5\) to \(0.255\), the pair \(SDK3\) vs \(SDK2\) exhibits the largest relative increase in both MAE and STD, as can be seen by comparing the corresponding ratios in Table~\ref{tab:method_errors_updated}. This behaviour is consistent with the fact that the schemes \(SDK_\kappa\) have increasing approximation order. Heuristically, this suggests that the discrepancy between \(SDK2\) and \(SDK1\) should be larger than that between \(SDK3\) and \(SDK2\).

Moreover, the methods \(SDK_\kappa\) are designed to handle kernels driven by paths of regularity strictly less than \(\kappa+1\). Therefore, as the regularity decreases, the discrepancy between successive schemes is expected to increase, which is consistent with the more pronounced effect observed as \(H\) approaches \(0.255\).

Below, we also plot the runtime cost for a kernel driven by three-dimensional Brownian motion, first as the grid size increases and then as the block size increases. The numerical scheme used in these simulations is \(SDK2\).

\begin{figure}[H]
  \centering
  \begin{subfigure}[t]{0.48\textwidth}
    \centering
    \includegraphics[width=\linewidth]{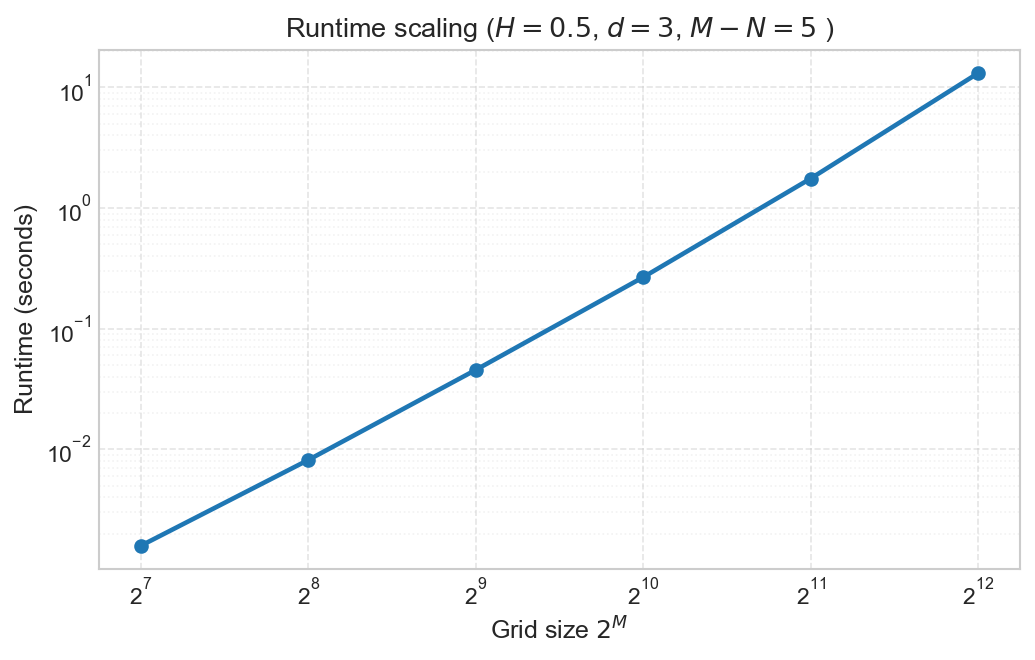}
    \caption{Runtime as a function of the grid size \(2^M\).}
    \label{fig:test1}
  \end{subfigure}%
  \hfill
  \begin{subfigure}[t]{0.48\textwidth}
    \centering
    \includegraphics[width=\linewidth]{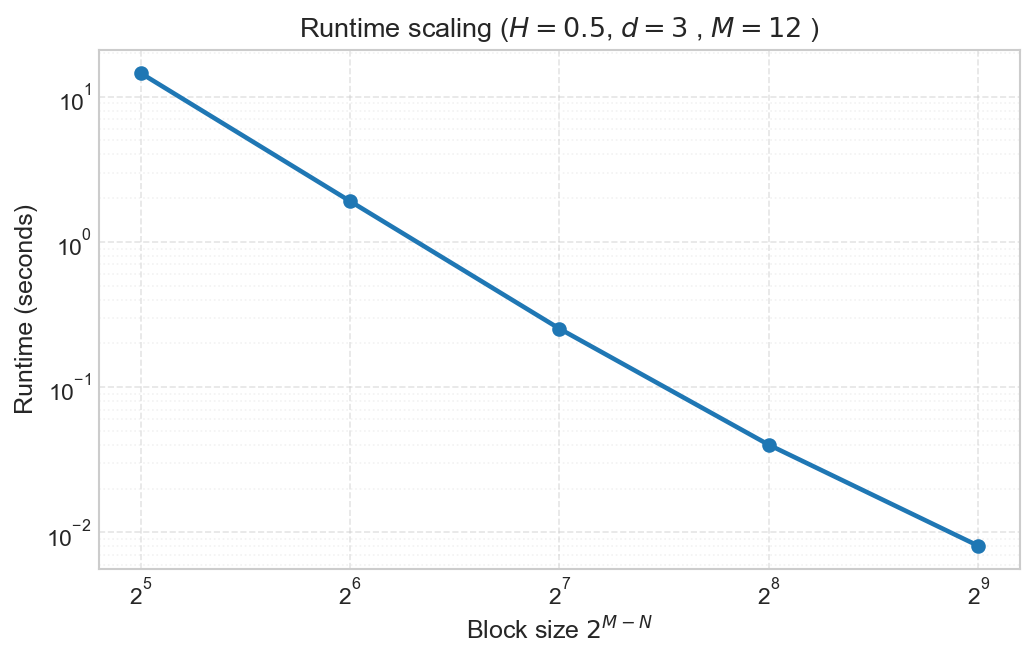}
    \caption{Runtime as a function of \(2^{M-N}\), the number of fine-grid increments used to construct each rough-path increment.}
    \label{fig:test2}
  \end{subfigure}
  \caption{Computational times for a kernel driven by three-dimensional Brownian motion, obtained by varying (a) the grid size and (b) the block size.}
  \label{fig:overall}
\end{figure}

We observe that, in accordance with our estimates, the runtime scales exponentially with both the grid size and the block size.

The next plot, in which we fix the grid size to \(M=12\) and the block size to \(M-N=5\), illustrates the polynomial scaling of the runtime complexity as the dimension \(d\) increases. The numerical scheme used in this experiment is again \(SDK2\).

\begin{figure}[H]
\centering
\includegraphics[scale=0.5]{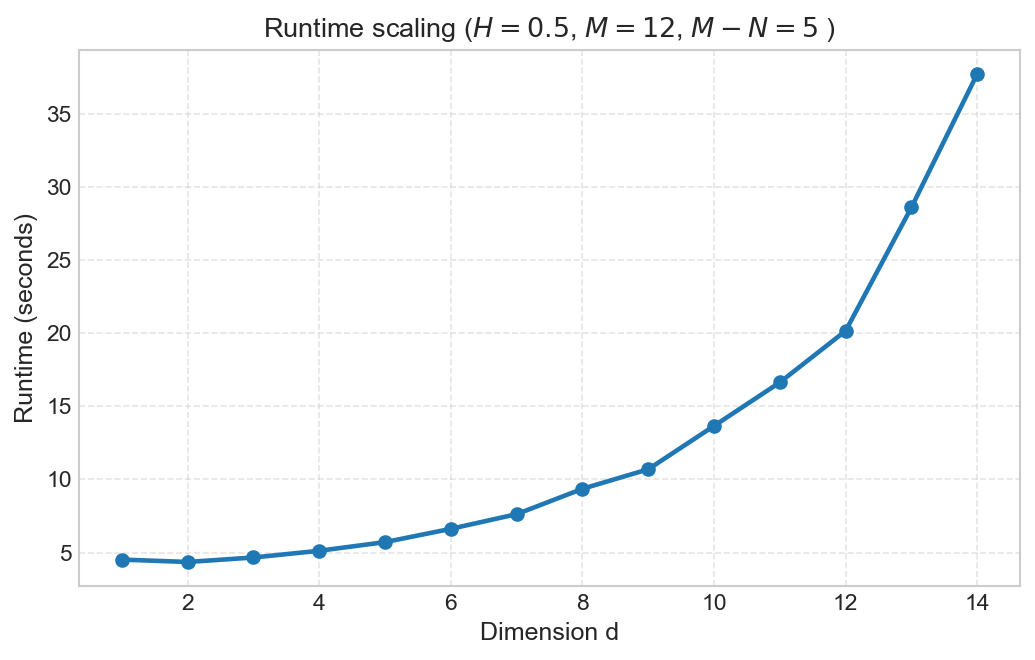} 
\caption{Runtime as a function of the dimension \(d\) for a kernel driven by Brownian motion.}
\label{fig:runtime_dimension}
\end{figure}

\appendix
\section{Technical results on two-parameter controlled paths}\label{appendix_technical}
We begin this section by recalling two of the main results in \cite{cass2023fubini}.
\begin{theorem}[Two-parameter rough integral on rectangular domains]\label{existence_2D_theorem}
Let 
\[
Z \in \mathcal{D}^{p, q}_{\mathbf{X}, \mathbf{Y}}\big([a, b] \times [c,d], \mathrm{Hom}(V \boxtimes W, F)\big).
\] 
Then the limit
\[
\int_{[a,b] \times [c,d]} Z_{s, u} \, d(\mathbf{X}, \mathbf{Y})  
   :=  \lim_{\left|\mathcal{P} \times \mathcal{D}\right| \to 0} 
   \sum_{\mathcal{P}\times \mathcal{D}} \Omega^Z
\]
exists and satisfies the estimate
\begin{align*}
\Bigg|
\int_{[a,b]\times[c,d]} Z_{s,u}\, d(\mathbf{X}_s,\mathbf{Y}_u)
- \Omega^Z\begin{pmatrix} a, b \\ c, d \end{pmatrix}
\Bigg|
&\le C_{p,q,T}\Big[
      A(\theta,T,\mathbf{R})\,
      \omega_{\mathbf{X}}(a,b)^{\theta}\,
      \omega_{\mathbf{Y}}(c,d)^{\theta} \\
&\qquad
      + B_X(c,d)\,\omega_{\mathbf{X}}(a,b)^{\theta}
      + B_Y(a,b)\,\omega_{\mathbf{Y}}(c,d)^{\theta}
\Big].
\end{align*}

where $\theta = \min(\frac{\floor{p} + 1}{p}, \frac{\floor{q} + 1}{q})$ and 
\begin{align*}
&A(p, q, \mathbf{R}) 
= \left( \sup_{j \in [0:\kappa)} \| \mathbf{X}^{(j+1)} \|_{p_j} \right)
  \left( \sup_{k \in  [0:\lambda)} \| \tilde{\mathbf{X}}^{(k+1)} \|_{\tilde{p}_k} \right)
  \left( \sup_{(j,k)\in [0:\kappa) \times [0:\lambda)} \| \mathbf{R}^{(j,k)} \|_{(q_j,\tilde{q}_k)} \right),\\
&B_{X}(c,d)
= 
\sup_{(j,k)\in [0:\kappa) \times [0:\lambda)}\left[ 
  \|\mathbf{X}^{(j+1)}\|_{\frac{p}{j+1}}\,
  \|\mathbf{Y}^{(k+1)}\|_{\frac{q}{k+1}}\,
  \big\| R^{1, (j,k)}
       \big\|_{\frac{p}{\kappa - j}}\,
  \omega_{\mathbf{Y}}(c,d)^{\frac{k+1}{q}}
\right],\\
&B_{Y}(a,b)
= \sup_{(j,k)\in [0:\kappa) \times [0:\lambda)}
\left[
   \|\mathbf{X}^{(j+1)}\|_{\frac{p}{j+1}}\,
  \|\mathbf{Y}^{(k+1)}\|_{\frac{q}{k+1}}\,
  \big\| R^{2,(j,k)}\!
       \big\|_{\frac{q}{\lambda - k}}\,
  \omega_{\mathbf{X}}(a,b)^{\frac{j+1}{p}}
\right].
\end{align*}

\end{theorem}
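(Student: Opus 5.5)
The plan is to run a two-parameter sewing argument. It controls the defect of the approximating map $\Omega^Z$ under refinement in each direction separately, and then in both directions jointly.

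\textbf{Step 1: one-direction defects.} Fix $a\le s\le r\le t\le b$ and $c\le u\le v\le d$. Using multiplicativity $\mathbf{X}_{s,t}=\mathbf{X}_{s,r}\otimes\mathbf{X}_{r,t}$ together with the first identity in \eqref{constraints}, I would compute the defect
\[
\delta_1\Omega^Z\begin{pmatrix} s,r,t\\ u,v\end{pmatrix}:=\Omega^Z\begin{pmatrix} s,t\\ u,v\end{pmatrix}-\Omega^Z\begin{pmatrix} s,r\\ u,v\end{pmatrix}-\Omega^Z\begin{pmatrix} r,t\\ u,v\end{pmatrix}.
\]
The result should equal $-R^1_{s,r}(u)[\mathbf{X}^{\ge1}_{r,t}\boxtimes\mathbf{Y}^{\ge1}_{u,v}]$, up to terms carrying tensor level exceeding $\kappa$, which are bounded via $\mathbf{X}^{(\kappa)}$. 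By \eqref{regularity_condition_r1}, summing over levels gives $\omega_{\mathbf X}(s,t)^{(\kappa+1)/p}$ times $\|\mathbf{Y}^{\ge1}_{u,v}\|$, which produces the $B_X$ term. The symmetric computation in the second parameter gives the $B_Y$ term.

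\textbf{Step 2: joint defect.} Next I would consider the mixed defect $\delta_1\delta_2\Omega^Z$. Expanding $R^1_{s,r}(u)$ in $u$ via the definition of $\mathbf R$ should show that it equals $\mathbf{R}\begin{pmatrix} s,r\\ u,w\end{pmatrix}$ evaluated on $\mathbf{X}^{\ge1}_{r,t}\boxtimes\mathbf{Y}^{\ge1}_{w,v}$, plus truncation terms. Then \eqref{remainder_remainder_norm} yields a bound
\[
C\,\omega_{\mathbf X}(s,t)^{\theta}\,\omega_{\mathbf Y}(u,v)^{\theta},\qquad \theta>1,
\]
with the constant given by $A$.

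\textbf{Step 3: removal of points.} Given a product partition, I would remove points Young-style: pick $s_i$ with $\omega_{\mathbf X}(s_{i-1},s_{i+1})\le \frac{2}{n-1}\omega_{\mathbf X}(a,b)$. Removing $s_i$ changes the full double sum by $\sum_j\delta_1\Omega^Z$ over the column, and this splits as a one-parameter defect at the bottom row plus the accumulated mixed defects. Summing $(2/(n-1))^\theta$ over $n$ gives the constants (the $\beta_p$-type zeta sums). Iterating in both directions bounds $\sum_{\mathcal P\times\mathcal D}\Omega^Z-\Omega^Z\begin{pmatrix}a,b\\c,d\end{pmatrix}$ uniformly by the stated right-hand side.

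\textbf{Step 4: existence of the limit.} Existence follows from a Cauchy argument: apply the same uniform estimate on subrectangles, and use superadditivity of the controls to show that refining any partition changes the sum by $o(1)$ as the mesh goes to zero.

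The main obstacle is the bookkeeping in Step 2. One has to track which tensor levels get truncated at $\kappa$ and $\lambda$, and check that all cross terms either cancel or are of order at least $\theta$ in both controls simultaneously.
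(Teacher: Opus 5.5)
The paper does not prove this theorem. It recalls it from \cite{cass2023fubini}, so your outline has to stand on its own, and Steps 1--3 do. With the truncations built into $\Omega^Z$ and $L_{\mathbf{X}^{<\kappa}}$ you get $\delta_1\Omega^Z=-R^1_{s,r}(u)[\mathbf{X}^{\ge 1}_{r,t}\boxtimes\mathbf{Y}^{\ge 1}_{u,v}]$ exactly, with no truncation terms. Expanding $R^1_{s,r}(w)$ around $u$ then gives $\delta_2\delta_1\Omega^Z=\mathbf{R}\begin{pmatrix} s,r\\ u,w\end{pmatrix}[\mathbf{X}^{\ge 1}_{r,t}\boxtimes\mathbf{Y}^{\ge 1}_{w,v}]$, also exactly. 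Compare each column sum $\sum_j\delta_1\Omega^Z$ with $\delta_1\Omega^Z$ over the whole of $[c,d]$ by a one-parameter sewing in $u$. Then run Young's point removal in $s$, and finally in $u$ on the single strip $[a,b]\times[c,d]$. This produces exactly the $A$, $B_X(c,d)$ and $B_Y(a,b)$ terms, and only $R^1(c)$, $R^2(a)$ and $\mathbf{R}$ enter.

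The step that fails as you describe it is Step 4. Suppose you apply the uniform estimate on the cells $[s_i,s_{i+1}]\times[u_j,u_{j+1}]$ of the coarse grid and sum with the triangle inequality. The $B_X$ contribution at level $k=0$ is then of order $\sum_i\omega_{\mathbf{X}}(s_i,s_{i+1})^{\theta}\sum_j\omega_{\mathbf{Y}}(u_j,u_{j+1})^{1/q}$. Superadditivity does not control the second sum when $q>1$. For uniform grids with $n$ and $m$ intervals and control equal to length, the bound is $n^{1-\theta}m^{1-1/q}$, which does not tend to zero if $m$ grows fast relative to $n$. Summing absolute values cell by cell throws away the cancellation in $j$ that comes from the $\mathbf{Y}$-controlledness of $u\mapsto R^1_{s,r}(u)$, and a joint mesh limit needs exactly that cancellation.

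The fix is already in your Step 3: refine one direction at a time. Take a point $s'$ of the finer partition with neighbours $s'_-<s'<s'_+$, and hold an arbitrary transverse partition $\mathcal{D}''$ fixed. Removing $s'$ costs at most $C\,\omega_{\mathbf{X}}(s'_-,s'_+)^{\theta}\big[A\,\omega_{\mathbf{Y}}(c,d)^{\theta}+B_X(c,d)\big]$, uniformly in $\mathcal{D}''$. Hence
\[
\Big|\sum_{\mathcal{P}'\times\mathcal{D}'}\Omega^Z-\sum_{\mathcal{P}\times\mathcal{D}'}\Omega^Z\Big|\le C\max_i\omega_{\mathbf{X}}(s_i,s_{i+1})^{\theta-1}\,\omega_{\mathbf{X}}(a,b)\big[A\,\omega_{\mathbf{Y}}(c,d)^{\theta}+B_X(c,d)\big],
\]
and symmetrically, with $B_Y(a,b)$, for passing from $\mathcal{P}\times\mathcal{D}'$ to $\mathcal{P}\times\mathcal{D}$. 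Each bound vanishes as its own mesh goes to zero, whatever the other partition is. Comparing two arbitrary partitions through their common refinement then gives the Cauchy property.
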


\begin{theorem}[Stability of two-parameter rough integrals]\label{stability_theorem_2D}
Let $\mathbf{X}, \tilde{\mathbf{X}} \in  \Omega G_{p, \omega_{\mathbf{X}}}([a,b], V)$ and $\mathbf{Y}, \tilde{\mathbf{Y}} \in  \Omega G_{q, \omega_\mathbf{Y}} ([c,d], W)$. Consider the controlled paths
\[
Z \in \mathcal{D}^{p, q}_{\mathbf{X}, \mathbf{Y}}\big([a, b] \times [c,d], F\big), \, \tilde{Z}\in \mathcal{D}^{p, q}_{\tilde{\mathbf{X}}, \tilde{\mathbf{Y}}}\big([a, b] \times [c,d], F\big).
\] 
Then the following estimate holds
\begin{align*}
&\left| \int_{[a,b] \times [c,d]} Z_{s, u} \, d(\mathbf{X}_s, \mathbf{Y}_u) 
      - \int_{[a,b] \times [c,d]} \tilde{Z}_{s, u} \, d(\tilde{\mathbf{X}}_s, \tilde{\mathbf{Y}}_u) \right|\\
&\qquad \leq C_{\omega_\mathbf{X}, \omega_\mathbf{Y}, p, q, Z, \tilde{Z}} 
\, \Big( D_{\omega_{\mathbf{X}}, \omega_{\mathbf{Y}}} d( Z, \tilde{Z} ) + \|\mathbf{X} - \tilde{\mathbf{X}}\|_{p} + \|\mathbf{Y} - \tilde{\mathbf{Y}}\|_{q} \Big),
\end{align*}
where $D_{\omega_{\mathbf{X}}, \omega_{\mathbf{Y}}} := \omega_{\mathbf{X}}(a,b)^{1/p} \omega_{\mathbf{Y}}(c,d)^{1/q}$.

\end{theorem}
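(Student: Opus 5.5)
The plan is to run the same two-parameter sewing argument that gives the existence result, Theorem~\ref{existence_2D_theorem}, but applied to a \emph{difference germ} rather than to a single germ. Set
\[
\Xi\begin{pmatrix} s,t\\ u,v\end{pmatrix}:=\Omega^{Z}\begin{pmatrix} s,t\\ u,v\end{pmatrix}-\Omega^{\tilde Z}\begin{pmatrix} s,t\\ u,v\end{pmatrix}
= Z_{s,u}\big[\mathbf X^{\ge1}_{s,t}\boxtimes\mathbf Y^{\ge1}_{u,v}\big]-\tilde Z_{s,u}\big[\tilde{\mathbf X}^{\ge1}_{s,t}\boxtimes\tilde{\mathbf Y}^{\ge1}_{u,v}\big].
\]
For each rectangular partition, the Riemann sums $\sum_{\mathcal P\times\mathcal D}\Xi$ converge to the difference of the two integrals, because each integral exists by Theorem~\ref{existence_2D_theorem}. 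It therefore suffices to prove two things: a bound on $|\Xi\begin{pmatrix} a,b\\ c,d\end{pmatrix}|$, and a bound on $\big|\sum_{\mathcal P\times\mathcal D}\Xi-\Xi\begin{pmatrix} a,b\\ c,d\end{pmatrix}\big|$ that is uniform in the partition.

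\textbf{Step 1: bound on the germ itself.} I would split
\[
\Xi=(Z-\tilde Z)_{s,u}\big[\mathbf X\boxtimes\mathbf Y\big]+\tilde Z_{s,u}\big[(\mathbf X-\tilde{\mathbf X})\boxtimes\mathbf Y\big]+\tilde Z_{s,u}\big[\tilde{\mathbf X}\boxtimes(\mathbf Y-\tilde{\mathbf Y})\big],
\]
where every argument is restricted to levels $\ge 1$. Expanding $Z_{s,u}$ and $\tilde Z_{s,u}$ from the corner $(a,c)$ via $R^1(c)$, $R^2(a)$ and $\mathbf R$ bounds $\sup|Z-\tilde Z|$ on each level. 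Heuristically this gives $\|Z-\tilde Z\|_\infty\lesssim d(Z,\tilde Z)+\|\mathbf X-\tilde{\mathbf X}\|_p+\|\mathbf Y-\tilde{\mathbf Y}\|_q$, with constants depending on $Z$ and $\tilde Z$. The first summand then contributes the factor $D_{\omega_{\mathbf X},\omega_{\mathbf Y}}\,d(Z,\tilde Z)$. The last two summands are linear in $\mathbf X-\tilde{\mathbf X}$ and $\mathbf Y-\tilde{\mathbf Y}$ and are bounded by $\|\mathbf X-\tilde{\mathbf X}\|_p$ and $\|\mathbf Y-\tilde{\mathbf Y}\|_q$.

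\textbf{Step 2: the two-parameter defect of $\Xi$.} Next I would compute the defect of $\Xi$ under refinement. This means splitting $[s,t]$ at $r$, splitting $[u,v]$ at $w$, and doing both. Following the existence proof, I would use multiplicativity of $\mathbf X,\mathbf Y,\tilde{\mathbf X},\tilde{\mathbf Y}$ together with the identities \eqref{constraints}. This rewrites each one-directional defect in terms of $R^1$ or $R^2$ paired against increments of the rough paths, and the mixed defect in terms of $\mathbf R$. The same holds for $\tilde Z$. Taking differences, each defect term becomes a sum of two kinds of contribution:
\begin{itemize}
\item a difference of remainders, e.g.\ $R^{Z,1}-R^{\tilde Z,1}$ or $\mathbf R^{Z}-\mathbf R^{\tilde Z}$, paired with $\mathbf X,\mathbf Y$, which is controlled by $d(Z,\tilde Z)$;
\item a remainder of $\tilde Z$ paired with $\mathbf X-\tilde{\mathbf X}$ or $\mathbf Y-\tilde{\mathbf Y}$, which is controlled by the $p$- and $q$-variation distances.
\end{itemize}
All these pieces have the exponents $\theta>1$ in each variable that appear in Theorem~\ref{existence_2D_theorem}. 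The one-directional pieces, which play the role of $B_X$ and $B_Y$, are handled by the one-parameter sewing lemma applied to the level-$(\cdot,0)$ and level-$(0,\cdot)$ differences. The mixed piece, which plays the role of $A$, is handled by the two-parameter sewing estimate (inequality (4.13) in \cite{cass2023fubini}).

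\textbf{Step 3: assembling and the main obstacle.} Summing the sewing estimate for $\Xi$ with the bound from Step~1 yields the claim. The main obstacle is Step~2. The difficulty is that $Z$ and $\tilde Z$ are controlled by different rough paths, so the remainders of the difference $Z-\tilde Z$ are not remainders with respect to a single driver. Cross terms such as $Z_{s,u}\circ[L_{\mathbf X_{s,t}}-L_{\tilde{\mathbf X}_{s,t}}\boxtimes\mathrm{id}]$ appear and must be absorbed into the $\|\mathbf X-\tilde{\mathbf X}\|_p$ term with the correct homogeneity $\omega_{\mathbf X}(s,t)^{(\kappa-j)/p}$. I would handle these level by level, bounding $\|\mathbf X^{(i)}-\tilde{\mathbf X}^{(i)}\|_{p/i}$ by $\|\mathbf X-\tilde{\mathbf X}\|_p$ times a constant depending on $\omega_{\mathbf X}$. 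This is where the dependence of $C$ on $Z$ and $\tilde Z$ enters.
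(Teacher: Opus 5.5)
The paper gives no proof of this theorem. Appendix~A only recalls it from \cite{cass2023fubini}, so there is no internal argument to compare with. Your route is the natural one: run the two-parameter sewing argument on the difference germ $\Xi=\Omega^Z-\Omega^{\tilde Z}$. It does give the estimate, including the factor $D_{\omega_{\mathbf X},\omega_{\mathbf Y}}$ in front of $d(Z,\tilde Z)$. Some parts of your outline should, however, be corrected.

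\emph{The obstacle you single out in Step~3 does not occur.} Write $\delta^1_r$ for the defect obtained by splitting $[s,t]$ at $r$, and $\delta^2_w$ for splitting $[u,v]$ at $w$. Because $\Xi$ is a difference of germs, its defects are the differences of the defects of $\Omega^Z$ and $\Omega^{\tilde Z}$, each computed with its own driver. Multiplicativity and \eqref{constraints} give exactly
\[
\delta^1_r\Omega^Z\begin{pmatrix} s,t\\ u,v\end{pmatrix}=-R^{1}_{s,r}(u)\big[\mathbf X^{\ge1}_{r,t}\boxtimes\mathbf Y^{\ge1}_{u,v}\big],\qquad
\delta^1_r\delta^2_w\Omega^Z\begin{pmatrix} s,t\\ u,v\end{pmatrix}=\mathbf R\begin{pmatrix} s,r\\ u,w\end{pmatrix}\big[\mathbf X^{\ge1}_{r,t}\boxtimes\mathbf Y^{\ge1}_{w,v}\big],
\]
and symmetrically for $\delta^2$. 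Hence you never need a ``remainder of $Z-\tilde Z$'' relative to one driver, and no term like $Z_{s,u}\circ[(L_{\mathbf X_{s,t}}-L_{\tilde{\mathbf X}_{s,t}})\boxtimes\mathrm{id}]$ appears. The only cross terms are $\tilde Z_{a,c}$, $R^{\tilde Z,1}$, $R^{\tilde Z,2}$ and $\mathbf R^{\tilde Z}$ paired with $\mathbf X^{\ge1}\boxtimes\mathbf Y^{\ge1}-\tilde{\mathbf X}^{\ge1}\boxtimes\tilde{\mathbf Y}^{\ge1}$, which you bound level by level. Likewise, the sup bound on $Z-\tilde Z$ in Step~1 is unnecessary, because the germ on the full rectangle involves only $Z_{a,c}-\tilde Z_{a,c}$.

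\emph{You cannot apply Theorem~\ref{existence_2D_theorem}, or (4.13) of \cite{cass2023fubini}, to $\Xi$ verbatim.} These are stated for germs $\Omega^W$ of a single controlled path, and $\Xi$ is not of that form. You need the sewing argument at germ level, which is linear in the germ. A clean way to organise it is as follows. Set $G(u,v)=\sum_i\Xi\begin{pmatrix} s_i,s_{i+1}\\ u,v\end{pmatrix}-\Xi\begin{pmatrix} a,b\\ u,v\end{pmatrix}$ and split
\[
\sum_{\mathcal P\times\mathcal D}\Xi-\Xi\begin{pmatrix} a,b\\ c,d\end{pmatrix}=G(c,d)+\Big[\sum_j G(u_j,u_{j+1})-G(c,d)\Big]+\Big[\sum_j\Xi\begin{pmatrix} a,b\\ u_j,u_{j+1}\end{pmatrix}-\Xi\begin{pmatrix} a,b\\ c,d\end{pmatrix}\Big].
\]
\begin{itemize}
\item The last bracket is a one-parameter sewing error controlled by $\delta^2\Xi$ along the edge $[a,b]$, which involves $R^2(a)$.
\item For the middle bracket, $\delta^2_wG$ is itself a one-parameter sewing error in $s$ for the germ $\delta^2_w\Xi$, so it is bounded through $\delta^1\delta^2\Xi$, which involves $\mathbf R$.
\item $G(c,d)$ is controlled by $\delta^1\Xi(\cdot\,;c,d)$, which involves $R^1(c)$.
\end{itemize}
So only the edge defects, at all levels $(j,k)$ and not just $(\cdot,0)$ and $(0,\cdot)$, together with the mixed defect enter. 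These are exactly the quantities in $d(Z,\tilde Z)$, and no transport of remainders to other base points is needed. Every $d$-contribution carries at least $\omega_{\mathbf X}(a,b)^{1/p}\omega_{\mathbf Y}(c,d)^{1/q}$, because it contains a level-$\ge1$ increment in each direction and the extra exponent $\theta>1$ is bounded by a constant. This is precisely where the factor $D_{\omega_{\mathbf X},\omega_{\mathbf Y}}$ comes from.
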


Below we present the proof of Propositions \ref{stability_under_composition} and \ref{stability_proposition}.\newline
We only treat the case $j>0$ and $k>0$, since the remaining cases follow
directly from the classical one-parameter theory of controlled rough paths for  the first level remainders and are much simpler to prove for the second order remainders. To improve readability we replace $id_{T^{\kappa}(V)}$ and $id_{T^{\lambda}(W)}$ by $id$ whenever it is clear from the context. Moreover we introduce the notation $\Delta^{\boxtimes}_{>0} := \widetilde{\Delta \boxtimes \Delta}$ .  We also allow the remainder terms to change from line to line. \newline
\begin{proof}[Proof of Proposition 
\ref{stability_under_composition}]
Notice that, from the symmetry of $D^\ell g$, we can write
\begin{align}
&\sum_{\ell=1}^{\lambda+\kappa-2} \frac{1}{\ell!}
D^\ell g\left(Z^{(0,0)}_{t,v}\right)
\circ Z^{\otimes \ell}_{t,v}
\circ (\Delta^{\boxtimes}_{>0})^\ell \label{original_expression_expansion_g}\\
&=
\sum_{i = 1}^{\kappa -1} \sum_{m = 0}^{\lambda -1} \frac{1}{i!} \frac{1}{m!} D^{i+m}g(Z^{(0,0)}_{t,v}) \circ Z^{\otimes i + m}_{t,v} \circ \left( (\tilde{\Delta} \boxtimes \Delta)^{i} \otimes ( id \boxtimes \tilde{\Delta})^{m} \right). \nonumber
\end{align}
Expanding $(Z_{t,v})^{\otimes i+m}$ in the first parameter yields 
\begin{align}
&\sum_{i = 1}^{\kappa -1} \sum_{m = 0}^{\lambda -1} \frac{1}{i!} \frac{1}{m!} D^{i+m}g(Z^{(0,0)}_{t,v}) \circ (Z_{s,v} \circ [L_{\mathbf{X}^{<\kappa}_{s,t}} \otimes id])^{\otimes i + m} \circ \left((\tilde{\Delta} \boxtimes \Delta))^{i} \otimes ( id \boxtimes \tilde{\Delta})^{m} \right) + A_{s,t}(v),
\end{align}
where the term $A_{s,t}(v)$ absorbs all the terms in the expansion that contain at least a term $R^1_{s,t}(v)$.\newline
We now expand the derivatives of $g$ around $Z^{(0,0)}_{s,v}$ in the first term above.
This yields
\begin{align*}
&\sum_{i = 1}^{\kappa -1} \sum_{n = i}^{\kappa -1} \sum_{m = 0}^{\lambda -1} \frac{1}{n!} \frac{1}{m!} \binom{n}{i} D^{n+m}g(Z^{(0,0)}_{s,v})\\
& \qquad \circ (Z_{s,v} \circ [L_{\mathbf{X}^{<\kappa}_{s,t}} \otimes id])^{\otimes i + m} \circ \left((\tilde{\Delta} \boxtimes \Delta))^{i} \otimes ( id \boxtimes \tilde{\Delta})^{m} \otimes (Z^{(0,0)}_{t,v} - Z^{(0,0)}_{s,v})^{\otimes n-i}\right) + B_{s,t}(v).
\end{align*}
Here $B_{s,t}(v)$ collects the Taylor expansion remainder for the expansion above.\newline
Expanding the difference $Z^{(0,0)}_{t,v} - Z^{(0,0)}_{s,v}$, we obtain
\begin{align*}
&\sum_{i = 1}^{\kappa -1} \sum_{n = i}^{\kappa -1} \sum_{m = 0}^{\lambda -1} \frac{1}{n!} \frac{1}{m!} \binom{n}{i} D^{n+m}g(Z^{(0,0)}_{s,v})\\
& \qquad \circ (Z_{s,v})^{n+m} \circ \left([L_{\mathbf{X}^{<\kappa}_{s,t}} \otimes id]^{\otimes i + m} \circ \left((\tilde{\Delta} \boxtimes \Delta))^{i} \otimes ( id \boxtimes \tilde{\Delta})^{m} \right) \otimes \left[\mathbf{X}_{s,t} \boxtimes \mathbf{1} \right]^{\otimes n-i}\right) + C_{s,t}(v).
\end{align*}
Where the term $C$ groups the terms in which at least a term $R^{1, (0,0)}$ appears.\newline
From the estimate
\begin{align*}
&\Bigg\| \sum_{n = 1}^{\kappa -1}  \frac{1}{n!}  D^{n+m}g(Z^{(0,0)}_{s,v})\circ (Z_{s,v})^{n} \circ\\
&\qquad \left(\sum_{i = 1}^{n}\binom{n}{i} [L_{\mathbf{X}^{<\kappa}_{s,t}} \otimes id]^{\otimes i } \circ (\tilde{\Delta} \boxtimes \Delta)^{i}  \otimes \left[\mathbf{X}_{s,t} \boxtimes \mathbf{1} \right]^{\otimes n-i} - (\tilde{\Delta} \boxtimes \Delta)^{n}  \otimes [L_{\mathbf{X}^{<\kappa}_{s,t}} \otimes id]^{\otimes n } \right)[\eta_j, \xi_k]\Bigg\|\\
&\leq C \omega_\mathbf{X}(s,t)^{(\kappa - j)/p} \|\eta_j\| \|\xi_k \|,
\end{align*}
valid for every $m \leq \lambda -1$, $\eta_j \in V^{\otimes j}$, $\xi_k \in W^{\otimes k}$ and exploiting the symmetry of the derivatives of g, the previous identity reduces to
\begin{align}
&\sum_{n = 1}^{\kappa -1} \sum_{m = 0}^{\lambda -1} \frac{1}{n!} \frac{1}{m!} D^{n+m}g(Z^{(0,0)}_{s,v}) \circ (Z_{s,v} \circ [L_{\mathbf{X}^{<\kappa}_{s,t}} \otimes id])^{\otimes n + m} \circ \left((\tilde{\Delta} \boxtimes \Delta))^{n} \otimes ( id \boxtimes \tilde{\Delta})^{m} \right) + E_{s,t}(v).
\end{align}
So that we can rewrite the original expression \eqref{original_expression_expansion_g} as 
\[
\sum_{\ell=1}^{\lambda+\kappa-2} \frac{1}{\ell!}
D^\ell g\left(Z^{(0,0)}_{s,v}\right)
\circ Z^{\otimes \ell}_{s,v}
\circ (\Delta^{\boxtimes}_{>0})^\ell \circ \left[L_{\mathbf{X}^{< \kappa}_{s,t}} \boxtimes id \right] + R^{g, 1}_{s,t}(v)\\
\]
The remainder $R^{g, 1}_{s,t}(v)$ collects all higher-order terms arising from:
(i) the remainder in the controlled expansion of $Z$,
(ii) the Taylor remainder of $g$, (iii) the remainder in the expansion for $Z^{(0,0)}_{t,v} - Z^{(0,0)}_{s,v}$
and (iv) the truncation error in the shuffle expansion. Its explicit form is
given by
\begin{align*}
R^{g, 1}_{s,t}(v)
&= A_{s,t}(v) + B_{s,t}(v) + C_{s,t}(v) + E_{s,t}(v),
\end{align*}
where
\begin{align*}
A_{s,t}(v)
&:=
\sum_{\ell=1}^{\lambda+\kappa-2}
\sum_{i=0}^{\ell}
\frac{1}{\ell!}
D^\ell g\left(Z^{(0,0)}_{t,v}\right)
\circ
\left(
Z^{\otimes(i-1)}_{s,v} \circ \left[L_{\mathbf{X}^{< \kappa}_{s,t}} \boxtimes id \right]
\otimes
R^{1}_{s,t}(v)
\otimes
Z^{\otimes(\ell-i)}_{t,v}
\right)
\circ
(\Delta^{\boxtimes}_{>0})^\ell,
\\[1ex]
B_{s,t}(v)
&:=
\sum_{i = 1}^{\kappa -1} \sum_{m = 0}^{\lambda -1} \frac{1}{(\kappa-1)!} \frac{1}{m!} \binom{\kappa-1}{i} \int_0^1 (1-r)^\kappa D^{\kappa+m}g((1-r)Z^{(0,0)}_{s, v} + rZ^{(0,0)}_{t, v}) dr\\
& \qquad \circ \left((Z_{s,v} \circ [L_{\mathbf{X}^{<\kappa}_{s,t}} \otimes id])^{\otimes i + m} \circ \left((\tilde{\Delta} \boxtimes \Delta)^{i} \otimes ( id \boxtimes \tilde{\Delta})^{m}\right) \otimes (Z^{(0,0)}_{t,v} - Z^{(0,0)}_{s,v})^{\otimes \kappa-i}\right),
\\[1ex]
C_{s,t}(v) &:= \sum_{i = 1}^{\kappa -1} \sum_{n = i}^{\kappa -1} \sum_{m = 0}^{\lambda -1} \sum_{p=0}^{n-i} \frac{1}{n!} \frac{1}{m!} \binom{n}{i} D^{n+m}g(Z^{(0,0)}_{s,v})\\
& \qquad \circ (Z_{s,v} \circ [L_{\mathbf{X}^{<\kappa}_{s,t}} \otimes id])^{\otimes i + m} \\
&\qquad \circ \left((\tilde{\Delta} \boxtimes \Delta))^{i} \otimes ( id \boxtimes \tilde{\Delta})^{m} \otimes \left(Z_{s,v} \circ \left[\mathbf{X}^{\geq 1 }_{s,t} \boxtimes id \right]\right)^{\otimes p-1} \otimes R^{1, (0,0)}_{s,t}(v) \otimes \left(Z^{(0,0)}_{t,v} - Z^{(0,0)}_{s,v}\right)^{\otimes n-i-p} \right) \\[1ex]
E_{s,t}(v)
&:=
\sum_{i = 1}^{\kappa -1} \sum_{n = i}^{\kappa -1} \sum_{m = 0}^{\lambda -1}  \frac{1}{n!} \frac{1}{m!} \binom{n}{i} D^{n+m}g(Z^{(0,0)}_{s,v})\circ Z^{\otimes m+n}_{s,v} \circ \Bigg( \left[ id \boxtimes \tilde{\Delta}\right]^{\otimes m} \otimes \\
&\qquad \left(\sum_{i = 1}^{n}\binom{n}{i} [L_{\mathbf{X}^{<\kappa}_{s,t}} \otimes id]^{\otimes i } \circ (\tilde{\Delta} \boxtimes \Delta)^{i}  \otimes \left[\mathbf{X}_{s,t} \boxtimes \mathbf{1} \right]^{\otimes n-i} - (\tilde{\Delta} \boxtimes \Delta)^{n}  \otimes [L_{\mathbf{X}^{<\kappa}_{s,t}} \otimes id]^{\otimes n } \right)\Bigg).
\end{align*}
It is not hard to see that the each term of this remainder satisfies the regularity constraints in equation \eqref{regularity_condition_r1}, moreover, the expressions above satisfy for a $K > 0$
\[
\big\| R^{g,1, (j,k)}\!\left(v\right)
       \big\|_{\frac{p}{\kappa - j}; \omega_\mathbf{X}} \leq K \|Z\|_{\mathcal{D}^{p,q}_{\mathbf{X}, \mathbf{Y}}}
\]
so that we can conclude for the expansion in the first parameter.\newline
The verification of the second constraint in \eqref{constraints} follows by
identical arguments and is therefore omitted.\newline
\end{proof}

\begin{proof}[Proof of Proposition \ref{stability_proposition}]
To prove the  continuity property we only need to check that $R^{g,1}$ and $R^{g,2}$ are respectively  $\mathbf{Y}$ and $\mathbf{X}$-controlled paths. Since the proof is the analogous for both processes we will only consider the path $R^{g,1}$.\newline
We begin by recalling the decomposition of $R^{g,1}$ derived in the previous proof, in which we are writing
\[
R^{g, 1} = A + B + C + E,
\]
 in order to show that $R^{g,1}$  is controlled by $\mathbf{Y}$ it is sufficient to show that each term in the above sum is controlled by $\mathbf{Y}$.\newline 
Starting with $A$, fix $u$ and expand the factors in the $v$--variable around $u$. This yields
\begin{align*}
A_{s,t}(v)
&=
\sum_{\ell=1}^{\lambda+\kappa-2}\sum_{i=0}^{\ell}\frac{1}{\ell!}\,
D^\ell g\left(Z^{(0,0)}_{t,v}\right)
\circ
\left(
Z^{\otimes(i-1)}_{s,u} \circ [L_{\mathbf{X}^{<\kappa}_{s,t}} \boxtimes id] \otimes R^1_{s,t}(u)\otimes Z^{\otimes(\ell-i)}_{t,u}
\right)
\circ
\left[id \boxtimes L_{\mathbf{Y}^{<\lambda}_{u,v}}\right]^{\otimes\ell}
\circ (\Delta^{\boxtimes}_{>0})^\ell
\\
&\quad + \hat{\mathbf{R}}^{A}\begin{pmatrix} s,t \\ u, v \end{pmatrix},
\end{align*}
where $\hat{\mathbf{R}}^{A}\begin{pmatrix} s,t \\ u, v \end{pmatrix}$ collects the remaining contributions
coming from terms containing one (or more) remainder terms in the expansions of
$Z_{s,v}$, $Z_{t,v}$, and $R^1_{s,t}(v)$.

We now focus on the leading term and reindex the sums as
$i=0,\dots,\lambda+\kappa-2$ and $\ell=i,\dots,\lambda+\kappa-2$:
\begin{equation}\label{pre_Taylor_g_Z_second_rem}
\begin{aligned}
&\sum_{i=0}^{\lambda+\kappa-2}\sum_{\ell=i}^{\lambda+\kappa-2}\frac{1}{\ell!}\,
D^\ell g\left(Z^{(0,0)}_{t,v}\right)
\circ
\left(
 Z^{\otimes(i-1)}_{s,u} \circ [L_{\mathbf{X}^{<\kappa}_{s,t}} \boxtimes id]\otimes R^1_{s,t}(u)\otimes Z^{\otimes(\ell-i)}_{t,u}
\right)
\circ
\left[id \boxtimes L_{\mathbf{Y}^{<\lambda}_{u,v}}\right]^{\otimes\ell}
\circ (\Delta^{\boxtimes}_{>0})^\ell \\
& \qquad +\hat{\mathbf{R}}^{A}\begin{pmatrix} s,t \\ u, v \end{pmatrix}.
\end{aligned}
\end{equation}

Next, we expand $D^\ell g$ around $Z^{(0,0)}_{t,u}$. For each $\ell$ we obtain
\begin{align*}
&D^\ell g\left(Z^{(0,0)}_{t,v}\right)
=
\sum_{m=\ell}^{\lambda+\kappa-2}\frac{1}{(m-\ell)!}\,
D^m g\left(Z^{(0,0)}_{t,u}\right)
\circ
\left(Z^{(0,0)}_{t,v}-Z^{(0,0)}_{t,u}\right)^{\otimes(m-\ell)}
+ \mathrm{Rem}_{\ell}(u,v),
\end{align*}
and we absorb the Taylor remainder $Rem_\ell(u,v)$ into
$\hat{\mathbf{R}}^{A}\begin{pmatrix} s,t \\ u, v \end{pmatrix}$. It immediate to see that this term satisfies the constraint \eqref{remainder_remainder_norm}. Substituting this Taylor in \eqref{pre_Taylor_g_Z_second_rem} yields
\begin{align*}
&=
\sum_{i=0}^{\lambda+\kappa-2}
\sum_{\ell=i}^{\lambda+\kappa-2}
\sum_{m=\ell}^{\lambda+\kappa-2}
\frac{1}{\ell!(m-\ell)!}\,
D^m g\left(Z^{(0,0)}_{t,u}\right)
\\
&\qquad\circ
\left(
Z^{\otimes(i-1)}_{s,u}\otimes R^{1}_{s,t}(u)\otimes Z^{\otimes(\ell-i)}_{t,u}
\otimes
\left(Z^{(0,0)}_{t,v}-Z^{(0,0)}_{t,u}\right)^{\otimes(m-\ell)}
\right)
\circ
\left[id \boxtimes L_{\mathbf{Y}^{<\lambda}_{u,v}}\right]^{\otimes\ell}
\circ (\Delta^{\boxtimes}_{>0})^\ell
\\
&\qquad + \hat{\mathbf{R}}^{A}\begin{pmatrix} s,t \\ u, v \end{pmatrix}.
\end{align*}

Finally, reindex the triple sum by fixing $m$ and summing over $\ell\le m$.
Using $\binom{m}{m-\ell}=\frac{m!}{\ell!(m-\ell)!}$, we obtain
\begin{align*}
&=
\sum_{i=0}^{\lambda+\kappa-2}
\sum_{m=i}^{\lambda+\kappa-2}
\sum_{\ell=i}^{m}
\frac{1}{m!}\binom{m}{m-\ell}\,
D^m g\left(Z^{(0,0)}_{t,u}\right)
\\
&\qquad\circ
\left(
Z^{\otimes(i-1)}_{s,u}\otimes R^{1}_{s,t}(u)\otimes Z^{\otimes(\ell-i)}_{t,u}
\otimes
\left(Z^{(0,0)}_{t,v}-Z^{(0,0)}_{t,u}\right)^{\otimes(m-\ell)}
\right)
\circ
\left[id \boxtimes L_{\mathbf{Y}^{<\lambda}_{u,v}}\right]^{\otimes\ell}
\circ (\Delta^{\boxtimes}_{>0})^\ell
\\
&\qquad + \hat{\mathbf{R}}^{A}\begin{pmatrix} s,t \\ u, v \end{pmatrix}.
\end{align*}
We can now expand the term $Z^{(0,0)}_{t,v} - Z^{(0,0)}_{t,u}$ and include the terms that present at least one remainder in $\hat{\mathbf{R}}^{A}\begin{pmatrix} s,t \\ u, v \end{pmatrix}$.\newline
\begin{align*}
&=
\sum_{i=0}^{\lambda+\kappa-2}
\sum_{m=i}^{\lambda+\kappa-2}
\sum_{\ell=i}^{m}
\frac{1}{m!}\binom{m}{m-\ell}\,
D^m g\left(Z^{(0,0)}_{t,u}\right)
\\
&\qquad \circ \left(
\left(
Z^{\otimes(i-1)}_{s,u}\otimes R^{1}_{s,t}(u)\otimes Z^{\otimes(\ell-i)}_{t,u}
\right)
\circ 
\left[id \boxtimes L_{\mathbf{Y}^{<\lambda}_{u,v}}\right]^{\otimes\ell}
\circ (\Delta^{\boxtimes}_{>0})^\ell \otimes
\left(Z_{t,u} \circ [ \mathbf{1} \boxtimes \mathbf{Y}^{\geq 1}_{s,t}  ]\right)^{\otimes(m-\ell)} \right)
\\
&\qquad + \hat{\mathbf{R}}^{A}\begin{pmatrix} s,t \\ u, v \end{pmatrix}.
\end{align*}
    
At this stage, we extract the principal contribution by replacing the inner
$\ell$--sum with the corresponding $m$--fold term, and we include the resulting
difference in the remainder:
\begin{align*}
&=
\sum_{i=0}^{\lambda+\kappa-2}
\sum_{m=i}^{\lambda+\kappa-2}
\frac{1}{m!}\,
D^m g\left(Z^{(0,0)}_{t,u}\right)
\circ
\left(
Z^{\otimes(i-1)}_{s,u}\otimes R^1_{s,t}(u)\otimes Z^{\otimes(m-i)}_{t,u}
\right)
\circ
(\Delta^{\boxtimes}_{>0})^{m}
\circ
\left[id \boxtimes L_{\mathbf{Y}^{<\lambda}_{u,v}}\right]
\\
&\qquad + \mathbf{R}^{A}\begin{pmatrix} s,t \\ u, v \end{pmatrix},
\end{align*}
where $\mathbf{R}^{A}\begin{pmatrix} s,t \\ u, v \end{pmatrix}$ absorbs both
$\hat{\mathbf{R}}^{A}\begin{pmatrix} s,t \\ u, v \end{pmatrix}$ and the truncation error arising from the
replacement of the inner $\ell$--sum by
$(\Delta^{\boxtimes}_{>0})^{m}\circ[id \boxtimes L_{\mathbf{Y}^{<\lambda}_{u,v}}]$. The latter term is the second level analogous of the term $C$ in the expansion of the first-level remainder, and can be seen to satisfy inequality \eqref{remainder_remainder_norm}.

We now turn to the terms $B_{s,t}(v)$ and $C_{s,t}(v)$, defined respectively as 
\begin{align*}
B_{s,t}(v)
&:=
\sum_{i = 1}^{\kappa -1} \sum_{m = 0}^{\lambda -1} \frac{1}{(\kappa-1)!} \frac{1}{m!} \binom{\kappa-1}{i} \int_0^1 (1-r)^\kappa D^{\kappa+m}g((1-r)Z^{(0,0)}_{s, v} + rZ^{(0,0)}_{t, v}) dr\\
& \qquad \circ \left((Z_{s,v} \circ [L_{\mathbf{X}^{<\kappa}_{s,t}} \boxtimes id])^{\otimes i + m} \circ \left((\tilde{\Delta} \boxtimes \Delta)^{i} \otimes ( id \boxtimes \tilde{\Delta})^{m}\right) \otimes (Z^{0,0}_{t,v} - Z^{0,0}_{s,v})^{\otimes \kappa-i}\right), \\
C_{s,t}(v) &:= \sum_{i = 1}^{\kappa -1} \sum_{n = i}^{\kappa -1} \sum_{m = 0}^{\lambda -1} \sum_{p=0}^{n-i} \frac{1}{n!} \frac{1}{m!} \binom{n}{i} D^{n+m}g(Z^{(0,0)}_{s,v})\\
& \qquad \circ (Z_{s,v} \circ [L_{\mathbf{X}^{<\kappa}_{s,t}} \otimes id])^{\otimes i + m} \\
&\qquad \circ \left((\tilde{\Delta} \boxtimes \Delta))^{i} \otimes ( id \boxtimes \tilde{\Delta})^{m} \otimes \left(Z_{s,v} \circ \left[\mathbf{X}^{\geq 1 }_{s,t} \boxtimes id \right]\right)^{\otimes p-1} \otimes R^{1, (0,0)}_{s,t}(v) \otimes \left(Z^{(0,0)}_{t,v} - Z^{(0,0)}_{s,v}\right)^{\otimes n-i-p} \right). 
\end{align*}
A procedure analogous to the one used for the term $A$ shows that
\begin{align*}
    &B_{s,t}(v) = B_{s,t}(u) \circ [id \boxtimes L_{\mathbf{X}^{< \kappa}}] + \mathbf{R}^{B}\begin{pmatrix} s,t \\ u, v \end{pmatrix}, \\
    &C_{s,t}(v) = C_{s,t}(u) \circ [id \boxtimes L_{\mathbf{X}^{< \kappa}}] + \mathbf{R}^{C}\begin{pmatrix} s,t \\ u, v \end{pmatrix}. \\
\end{align*}
Where $\mathbf{R}^{B}\begin{pmatrix} s,t \\ u, v \end{pmatrix}$ and $\mathbf{R}^{C}\begin{pmatrix} s,t \\ u, v \end{pmatrix}$
satisfy the bound \eqref{remainder_remainder_norm}.

We finally treat the term $E$. Set
\begin{align*}
E_{s,t}(v)
&:=
\sum_{i = 1}^{\kappa -1} \sum_{n = i}^{\kappa -1} \sum_{m = 0}^{\lambda -1}  \frac{1}{n!} \frac{1}{m!} \binom{n}{i} D^{n+m}g(Z^{(0,0)}_{s,v})\circ Z^{\otimes m + n}_{s,v} \circ \Bigg(  \left[ id \boxtimes \tilde{\Delta}\right]^{\otimes m}   \\
&\qquad \otimes \left(\sum_{i = 1}^{n}\binom{n}{i} [L_{\mathbf{X}^{<\kappa}_{s,t}} \otimes id]^{\otimes i } \circ (\tilde{\Delta} \boxtimes \Delta)^{i}  \otimes \left[\mathbf{X}^{\ge 1}_{s,t} \boxtimes \mathbf{1} \right]^{\otimes n-i} - (\tilde{\Delta} \boxtimes \Delta)^{n}  \otimes [L_{\mathbf{X}^{<\kappa}_{s,t}} \otimes id] \right)\Bigg)\\ 
&= \sum_{i = 1}^{\kappa -1} \sum_{n = i}^{\kappa -1} \sum_{m = 0}^{\lambda -1}  \frac{1}{n!} \frac{1}{m!} \binom{n}{i} D^{n+m}g(Z^{(0,0)}_{s,v})\circ Z^{\otimes m + n}_{s,v} \circ \Bigg(  \left[ id \boxtimes \tilde{\Delta}\right]^{\otimes m} \otimes  G(\mathbf{X}^{\geq 1}_{s,t})\Bigg)
\end{align*}
proceeding in a totally analogous way to what we did above, we obtain expanding $Z$ in the second parameter and grouping the terms that present at least one remainder term $R^2$ in $\hat{\mathbf{R}}^{E}$,
\begin{align*}
&=
\sum_{i=1}^{\kappa-1} \sum_{n=i}^{\kappa-1} \sum_{m=0}^{\lambda-1} 
\frac{1}{n!} \frac{1}{m!} \binom{n}{i}
D^{n+m} g(Z^{(0,0)}_{s,v}) \circ Z^{\otimes m+n}_{s,u} \\
&\quad \circ
\Bigg(
    \left[ id \boxtimes L_{\mathbf{Y}^{<\lambda}_{u,v}} \right]
    \circ \left[ id \boxtimes \tilde{\Delta} \right]^{\otimes m}  \otimes
    \left[ id \boxtimes L_{\mathbf{Y}^{<\lambda}_{u,v}} \right]^{\otimes n}
    \circ G(\mathbf{X}^{\ge 1}_{s,t})
\Bigg) \\
&\quad
+ \hat{\mathbf{R}}^{E} \begin{pmatrix} s,t \\ u,v \end{pmatrix} .
\end{align*}

From definition of $R^2$ it can be seen that the term $\hat{\mathbf{R}}^{E}\begin{pmatrix} s,t \\ u, v \end{pmatrix}$ satisfies \eqref{remainder_remainder_norm}.\newline
Subsequently, by expanding $g$ around $Z^{(0,0)}_{s,u}$ and absorbing the Taylor remainder in $\hat{\mathbf{R}}^{E}\begin{pmatrix} s,t \\ u, v \end{pmatrix}$
\begin{align*}
        = \sum_{i = 1}^{\kappa -1} \sum_{n = i}^{\kappa -1} \sum_{m = 0}^{\lambda -1} \sum_{l=m}^{\lambda -1} &\frac{1}{n!} \frac{1}{l!} \binom{n}{i} \binom{l}{m} D^{n+l}g(Z^{(0,0)}_{s,u})\circ Z^{\otimes l + n}_{s,u}\\ 
        & \circ \Bigg( \left( \left[id \boxtimes L_{\mathbf{Y}^{< \lambda}_{u,v}}\right] \circ \left[ id \boxtimes \tilde{\Delta}\right]\right)^{\otimes m} 
         \otimes  \left(\left[id \boxtimes L_{\mathbf{Y}^{< \lambda }_{u,v}}\right]\right)^{\otimes n} \circ G(\mathbf{X}^{\geq 1}_{s,t})\otimes \left[ \mathbf{1} \boxtimes \mathbf{Y}^{\geq 1}_{u,v} \right]^{\otimes l-m}\Bigg) \\
         \qquad + \hat{\mathbf{R}}^{E}\begin{pmatrix} s,t \\ u, v \end{pmatrix}.
\end{align*}
Again, the newly absorbed term satisfies \eqref{remainder_remainder_norm}.\newline
Finally proceeding as we did in the previous proof in order to recover $C$ we obtain the desired identity i.e.
\begin{align*}
E_{s,t}(v) = E_{s,t}(u)\circ [id \boxtimes L_{\mathbf{Y}^{< \lambda}_{u,v}}] + \mathbf{R}^{E}\begin{pmatrix} s,t \\ u, v \end{pmatrix}, 
\end{align*}
where $\mathbf{R}^{E}\begin{pmatrix} s,t \\ u, v \end{pmatrix}$  is the sum of this newly found second level remainder term and $\hat{\mathbf{R}}^{E}\begin{pmatrix} s,t \\ u, v \end{pmatrix}$. This allows to conclude by noting that $\mathbf{R}^{E}\begin{pmatrix} s,t \\ u, v \end{pmatrix}$ respects the bound \eqref{remainder_remainder_norm} and that, from construction, there exists a $K > 0$ such that 
\[
\big\| \mathbf{R}^{F, (j,k)} \big\|_{(\frac{p}{\kappa - j},\frac{q}{\lambda - k}), (\omega_\mathbf{X}, \omega_\mathbf{Y})} \leq K \| Z \|_{\mathcal{D}_{\mathbf{X}, \mathbf{Y}}},
\] 
for all $j \in [0:\kappa)$, $k \in [0:\lambda)$, $F \in \{A, B, C, E\}$.
\end{proof}

\section{Existence of the rough Schwinger-Dyson equation}\label{appendix_existence_SD}

In this section, we present the assumptions and the proof of existence for the rough Schwinger--Dyson kernel and its related equation. This result is a variant of Theorem 3.10 in \cite{cass2022combinatorial}, extended to the case where the driving signal is a geometric rough path of arbitrarily low regularity. 

Throughout the remainder of this section, we use $\mathfrak{h}_N$ to denote the space of $N \times N$ Hermitian matrices, for which the identity $\mathfrak{u}_N = i\,\mathfrak{h}_N$ holds. Moreover, we denote by $U(N, \mathbb{C})$ the unitary group and $\mathcal{P}(W)$ the space of Borel probability measures on a finite-dimensional Banach space $W$. Recall that every path in $X \in C^1_{ \omega}([a,b], V)$ can be lifted to the  geometric $p$-rough path
\[
(s,t) \to \left(1, X_{s,t}, \int_{s < u_1 < u_2 < t} dX_{u_1} \otimes dX_{u_2}, \dots, \int_{s < u_1 < \dots < u_\kappa < t} dX_{u_1} \otimes \dots \otimes dX_{u_\kappa} \right),
\]
where each integral is defined in the Riemann-Stieltjes sense, we refer to this as the natural lift of $X$.\newline
Within this framework, we are now in a position to state and prove the following existence theorem.
\begin{theorem}\label{theorem_existence}
Let $\mathbf X \in G\Omega_p(V)$. For each $N\in\mathbb N$ and each $1\le i\le d$, let
\(
A_i^N:\Omega\to\mathfrak h_N
\)
be a Hermitian random matrix defined on a probability space $(\Omega,\mathcal F,\mathbb P)$. Assume that the following hold.

First, for every $k\in\mathbb N$,
\[
\sup_{N\in\mathbb N}\sup_{1\le i\le d}\sup_{1\le m,l\le N}
\mathbb E\big[|A_i^N(m,l)|^k\big]\le c_A(k)<\infty,
\]
and moreover
\[
\mathbb E[A_i^N(m,l)]=0,
\qquad
\mathbb E[|A_i^N(m,l)|^2]=1.
\]

Second, there exists $K>0$ such that, for every $r\in\mathbb N$,
\[
\sup_{N\in\mathbb N}\sup_{0\le s<t\le T}
\mathbb E\!\left[\frac{1}{N^{r/2}}\|A_i^N(m,l)\|_{\mathrm{op}}^r\right]
\le K^r\Gamma\!\left(\frac{r}{2p}+1\right).
\]

Finally, assume that the sigma-algebras
\[
\bigvee_{i=1}^d \sigma(A_i^N(m,l)),
\qquad 1\le m\le l\le N,
\]
are independent, and that
\[
\mathbb E\!\left[A_i^N(m,l)A_j^N(m,l)\right]=0,
\qquad i\ne j.
\]

Define
\[
M^N\in\operatorname{Hom}(\mathbb R^d,\mathfrak h_N),
\qquad
M^N(\eta)=\sum_{i=1}^d A_i^N\,\eta^{(i)}.
\]
Let $Z^N(\mathbf X)\in\mathcal D^p([0,T],U(N;\mathbb C))$ denote the solution of
\begin{equation} \label{eq:36}
dZ^N(s,t)=\frac{i}{\sqrt N}M^N(d\mathbf X_t)\,Z^N(s,t),
\qquad
Z^N(s,s)=I_N.
\end{equation}

Then, for every $0\le s\le t\le T$ and for $\xi_N \in \mathcal{P}(\mathfrak{h}^d_N)$ denoting the law of the hermitian matrices $A^N_i$, the limit
\[
K(s,t)=\lim_{N\to\infty}\frac1N\,
\mathbb E_{\xi_N}\!\left[\operatorname{Tr}(Z^N(s,t))\right]
\]
exists and is given by
\begin{equation}\label{rough_SD_rep}
K(s,t)=\sum_{|L|=0}^\infty i^{|L|}\varphi(L)\,S^{(L)}(\mathbf X)_{s,t}.
\end{equation}
Moreover, \(K\) solves the two-parameter RDE
\begin{equation}\label{SD_with_f}
K(s,t)=1-\int_s^t\int_s^r f(K(s,u),K(u,r))\,d\mathbf X_u\,d\mathbf X_r,
\end{equation}
where
\[
f:\mathbb R^2\to \mathrm{Hom}(V\boxtimes V, \mathbb{R}),
\qquad
f(z_1,z_2)[\eta,\xi]=z_1z_2\langle \eta,\xi\rangle_V.
\]
\end{theorem}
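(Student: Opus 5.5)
The plan is to prove the statement in three steps: an expansion of $\frac1N\mathbb E[\operatorname{Tr} Z^N(s,t)]$ along the signature of $\mathbf X$; a passage to the limit $N\to\infty$ term by term, controlled by dominated convergence; and a transfer of the bounded-variation equation \eqref{SD_kernel_equation_BV} to the rough setting by approximation.

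\textbf{Step 1: series representation at finite $N$.} Since \eqref{eq:36} is linear, its solution is given by the convergent series
\[
Z^N(s,t)=\sum_{|L|\ge 0}\Big(\tfrac{i}{\sqrt N}\Big)^{|L|}A^N_{l_{|L|}}\cdots A^N_{l_1}\,S^{(L)}(\mathbf X)_{s,t},
\]
where the extension of $\mathbf X$ to its full signature is supplied by the extension theorem recalled before the definition of the signature kernel. Taking normalised expected traces gives
\[
\frac1N\mathbb E[\operatorname{Tr}Z^N(s,t)]=\sum_L i^{|L|}\varphi_N(L)\,S^{(L)}(\mathbf X)_{s,t},
\qquad
\varphi_N(L)=N^{-1-|L|/2}\,\mathbb E\operatorname{Tr}(A^N_{l_{|L|}}\cdots A^N_{l_1}).
\]
Exchanging expectation, trace and sum requires a dominating bound. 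For this I would use the operator-norm moment hypothesis together with Hölder's inequality, which gives $|\varphi_N(L)|\le K^{|L|}\Gamma(|L|/(2p)+1)$ up to harmless factors. I would combine this with the factorial decay $|S^{(n)}(\mathbf X)_{s,t}|\le \omega(s,t)^{n/p}/(\beta_p\Gamma(n/p+1))$ and the count of $d^n$ words of length $n$. The resulting majorant $\sum_n (dK\,\omega^{1/p})^n\,\Gamma(n/(2p)+1)/\Gamma(n/p+1)$ is summable, and it is uniform in $N$ and in $(s,t)$.

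\textbf{Step 2: the limit $N\to\infty$.} For each fixed word $L$, the hypotheses (independent entries, centred, unit variance, uncorrelated across $i\neq j$, all moments uniformly bounded) are precisely those of the multi-matrix Wigner theorem on asymptotic freeness. By that theorem, $\varphi_N(L)\to\varphi(L)$, the joint moments of $d$ free semicircular variables. The uniform majorant from Step 1 then allows dominated convergence in the series, which yields both the existence of the limit and the representation \eqref{rough_SD_rep}. \emph{This is where I expect the main obstacle}: turning the operator-norm moment assumption into a word-uniform bound whose gamma growth is beaten by $1/\Gamma(n/p+1)$. Since $1/(2p)<1/p$, the bound should close, but the constants need care when $p$ is large.

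\textbf{Step 3: the equation \eqref{SD_with_f}.} Take smooth (for instance piecewise linear) paths $X^\epsilon$ whose natural lifts converge to $\mathbf X$ in $p$-variation with a common control. For these bounded-variation paths, \eqref{SD_kernel_equation_BV} holds by Theorem 3.10 of \cite{cass2024free}, and $K^\epsilon$ is given by \eqref{rough_SD_rep} with $S(X^\epsilon)$ in place of $S(\mathbf X)$.

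I would lift $K^\epsilon$ to a controlled path through the word formula \eqref{gub_representation}. The Gubinelli derivatives are then series $\sum_L i^{|L|+\cdots}\varphi(JLI)\,S^{(L)}$. Their remainders are controlled by the same factorial-decay argument as in Step 1 together with multiplicativity of the signature, uniformly in $\epsilon$, and continuity of the signature in $\mathbf X$ (in the Lyons extension sense) gives convergence of $K^\epsilon$ to $K$ in the controlled norm. Finally, $f(K_{s,\cdot},K)$ is a controlled path by Proposition~\ref{stability_under_composition}, with Lipschitz dependence by Proposition~\ref{stability_proposition}. The simplex integral is, by \eqref{defining_identity_2p_simplex}, half the rectangle integral of the symmetrisation, and Theorem~\ref{stability_theorem_2D} makes the right-hand side continuous. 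Hence both sides of \eqref{SD_kernel_equation_BV} for $X^\epsilon$ converge to the corresponding sides of \eqref{SD_with_f}, which proves that $K$ solves the two-parameter RDE.
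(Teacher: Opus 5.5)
Your argument is correct. Step 3 is essentially the paper's own argument: approximate by bounded-variation lifts, use the bounded-variation equation from \cite{cass2024free}, lift $K$ through the word formula, and pass to the limit using composition stability and the rectangle stability theorem applied to the symmetrisation. You differ from the paper in Steps 1--2.

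The paper does not expand the rough equation directly. It approximates $\mathbf X$ by bounded-variation paths $X^n$ already at finite $N$, quotes the expansion of $\frac1N\mathbb E\operatorname{Tr} Z^{N,n}$ from \cite{cass2024free}, and asserts an $L^1$ dominator uniform in both $N$ and $n$. It then interchanges $\lim_n$ and $\lim_N$ and identifies the inner limit through the bounded-variation result. You instead expand the solution of the linear RDE directly in $S(\mathbf X)$. This removes the second limit and the interchange altogether. Your H\"older estimate $|\varphi_N(L)|\le K^{|L|}\Gamma(|L|/(2p)+1)$ also supplies explicitly the dominator that the paper only asserts.

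The price is that you must prove $\varphi_N(L)\to\varphi(L)$ yourself rather than inherit it from the bounded-variation theorem. Be careful here: the hypotheses give only independence across sites of the $d$-tuples and uncorrelatedness across colours. They do not give independence of the $d$ matrices, as the usual multi-matrix Wigner theorem assumes, so ``precisely those of'' is an overstatement. You should instead say that the moment-method proof still goes through. Only tree-like closed walks survive, with each edge traversed once in each direction, and each such pair contributes $\mathbb E\bigl[A_{l_a}(j,k)\overline{A_{l_b}(j,k)}\bigr]$. So the uncorrelatedness hypothesis must be read with a conjugate when the entries are complex. What the paper's route buys in return is that a single approximating sequence serves both the representation formula and the equation.

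Finally, the obstacle you anticipate in Step 2 is not one. By Stirling, $\Gamma(n/(2p)+1)/\Gamma(n/p+1)$ is of order $(ep/(2n))^{n/(2p)}$. Hence your majorant $\sum_n (dK\omega^{1/p})^n\Gamma(n/(2p)+1)/\Gamma(n/p+1)$ converges for every $p$, $d$, $K$ and $\omega$, with no care about constants needed.
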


\begin{proof}
Let \((X^n)_{n\ge1}\) be a sequence of paths with bounded variation whose canonical lifts
\(\mathbf X^n\) converge to \(\mathbf X\) in the \(p\)-variation topology. Then, without loss of generality we can assume that, for every non-empty word \(I\) and all \(n\),
\begin{equation}\label{eq:unif-signature-bound}
\left|S^{(I)}(X^n)_{s,t}\right|
\;\le\;
\frac{\omega_{\mathbf{X}}(s,t)^{\,|I|/p}}{\beta\,\Gamma\!\left(|I|/p+1\right)}.
\end{equation}
By Lyons’ extension theorem, the same estimate holds for \(S(\mathbf X)\).

Let \(Z^{N,n}\) be the solution to \eqref{eq:36} with driving signal \(\mathbf X^n\). Expanding as in Theorem~2.1 in \cite{cass2024free} yields
\[
\mathbb E\!\left[\frac{1}{N}\operatorname{Tr}(Z^{N,n}_{st})\right]
=
\sum_{|I|=1}^{\infty}
\frac{i^{\,|I|}}{N^{|I|/2+1}}\;
\mathbb E\!\left[\operatorname{Tr}(A_I^N)\right]\,
S^{(I)}(X^n)_{st}.
\]
Taking absolute values and using \(|\operatorname{Tr}B|\le d\,\|B\|_{\mathrm{op}}\), together with
\eqref{eq:unif-signature-bound}, we obtain the uniform (in \(N,n\)) bound
\begin{equation}\label{eq:series-bound}
\mathbb E\!\left[\left|\tfrac{1}{N}\operatorname{Tr}(Z^{N,n}_{s,t})\right|\right]
\;\le\;
\sum_{|I|=1}^{\infty}
\frac{1}{N^{|I|/2+1}}\;
\mathbb E\!\left[\left|\operatorname{Tr}(A_I^N)\right|\right]\,
\frac{\omega(s,t)^{\,|I|/p}}{\beta\,\Gamma\!\left(|I|/p+1\right)}
\;<\;\infty.
\end{equation}
(Under the standing assumptions on \(A_i^N\), moment bounds and
independence across distinct indices, the series on the right is finite,
provides an \(N,n\)-independent \(L^1\) dominator.)

Consequently, by Tonelli’s theorem we may interchange expectation and the (absolutely
convergent) series, and by dominated convergence (using \eqref{eq:series-bound} as the
dominator) we may pass to the limits \(n\to\infty\) and \(N\to\infty\). Therefore,
\[
\begin{aligned}
\lim_{N\to\infty}\mathbb E\left[Z^N_{s,t}\right]
&= \lim_{N\to\infty}\mathbb E\!\left[\lim_{n\to\infty} Z^{N,n}_{s,t}\right]
 = \lim_{N\to\infty}\lim_{n\to\infty}\mathbb E\left[Z^{N,n}_{s,t}\right] \\
&= \lim_{n\to\infty}\sum_{|I|=0}^{\infty} i^{\,|I|}\,\varphi(I)\,S^{(I)}(X^n)_{s,t}
 = \sum_{|I|=0}^{\infty} i^{\,|I|}\,\varphi(I)\,S^{(I)}(\mathbf X)_{s,t},
\end{aligned}
\]
which is the desired identity \eqref{rough_SD_rep}.\newline
From this expansion is clear that $K \in \mathcal{D}^p_{\mathbf{X}}(\Delta_{[a,b]}, \mathbb{R})$. An entirely analogous argument to the one above allows us to characterize all higher-level components in the truncated tensor algebra $T^{<\kappa}(V) \boxtimes T^{<\kappa}(V)$ through the formula
\[
K^{(I, J)}(s,t)
= (-1)^{|I|}\sum_{|L|=0}^{\infty} i^{\,|JLI|}\,\varphi(JLI)\, S^{(L)}(\mathbf{X})_{s,t}.
\]
As a consequence, the sequence of Schwinger-Dyson kernel approximations $\{K^n\}$, once lifted to a $p$-controlled rough path, satisfies
\[
d_{\Delta_T}^{\mathbf{X}^n, \mathbf{X}}(K^n, K) \to 0 \qquad \text{as } n \to \infty.
\]
Recall that, as derived in Section 3 of  \cite{cass2024free}, for each $\mathbf{X}^n$, the Schwinger-Dyson kernel satisfies the equation
\[
K^n(s,t) = 1 -  \int_s^t \int_s^r K^n(s,u) K^n(u,r) \langle dX^n_u, \; dX^n_r \rangle.
\]
The fact that the kernel $K$ solves equation~\eqref{SD_with_f} then follows from this convergence together with the stability properties of two-parameter rough integrals on two-simplices, derived from the stability of joint rough integrals on rectangles (Theorem \ref{stability_theorem_2D}). This completes the proof.
\end{proof}

\section{Algorithm complexity} \label{appendix:algorithmic_complexity}

\begin{proposition}
    The set $NC(n)$ constructed over the alphabet $1, \dots, d$, contains 
    \[
    \sum_{i = 0}^{\floor{n/2}} \frac{n -2i +1}{n+1} \binom{n+1}{i} d^{n - i}
    \] 
    elements.
\end{proposition}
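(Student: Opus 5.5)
The plan is to read $NC(n)$ ``constructed over the alphabet $1,\dots,d$'' as the set of pairs $(\pi,I)$, where $\pi$ is a non-crossing partial matching produced by the recursion and $I=i_1\cdots i_n$ is a word. The word must be compatible with $\pi$ in the sense dictated by the weights $\prod_{(p,q)\in P(\pi)}\delta_{i_p,i_q}$, that is, $i_p=i_q$ for every pair $(p,q)\in P(\pi)$. These are exactly the terms that survive in Proposition~\ref{Gubinelli derivatives}.

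The count then factorises. Fix the number $i$ of pairs of $\pi$. A compatible word is determined by choosing one letter for each pair and one letter for each of the $n-2i$ unmatched points. This gives $d^{\,i+(n-2i)}=d^{\,n-i}$ words. It therefore suffices to show that the number $N(n,i)$ of matchings in $NC(n)$ with exactly $i$ pairs equals $\frac{n-2i+1}{n+1}\binom{n+1}{i}$. Summing over $0\le i\le\floor{n/2}$ then gives the claim.

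To compute $N(n,i)$, encode a matching by a lattice path through the recursive definition. Reading $k=1,\dots,n$, record an up-step if at stage $k$ the new index $k$ is left unmatched. Record a down-step if $k$ is paired with $\max U(\pi)$, the largest currently unmatched index. (I will treat the base case $NC(1)=\{\{\{1\}\}\}$ as a single up-step.) A down-step is admissible exactly when $U(\pi)\neq\emptyset$, i.e.\ when the current height is positive. The recursion builds each matching in exactly one way, so this encoding is a bijection. It identifies matchings with $i$ pairs with $\pm1$ paths of length $n$ having $n-i$ up-steps and $i$ down-steps that never go below $0$. One should check briefly that the recursion never produces the same $\pi$ twice; this holds because the last step is determined by whether $n$ is a singleton in $\pi$.

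These paths are ballot sequences. By the reflection principle, the number of paths with $n-i$ ups and $i$ downs that touch $-1$ is $\binom{n}{i-1}$. Hence
\[
N(n,i)=\binom{n}{i}-\binom{n}{i-1}=\frac{n-2i+1}{n-i+1}\binom{n}{i}.
\]
Rewriting $\binom{n}{i}/(n-i+1)=\binom{n+1}{i}/(n+1)$ gives $\frac{n-2i+1}{n+1}\binom{n+1}{i}$. The only step needing care is the bijection and the interpretation of ``over the alphabet'' as counting compatible labelled words. The enumeration itself is the classical ballot computation.
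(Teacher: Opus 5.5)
Your argument is correct, and it reaches the formula by a different route from the paper. The paper keeps the labels inside the count throughout. It lets $A_{n,m}$ be the number of words of length $m$ in $NC(n)$, i.e.\ labelled elements with $m$ unmatched points, and reads off the weighted recursion $A_{n,m}=dA_{n-1,m-1}+A_{n-1,m+1}$, $A_{n,0}=A_{n-1,1}$. It then verifies the closed form $A_{n,m}=d^{(n+m)/2}\bigl(\binom{n}{(n+m)/2}-\binom{n}{(n+m)/2+1}\bigr)$ by induction on $n$ using Pascal's rule, and finally substitutes $i=(n-m)/2$.

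You instead split off the label count $d^{n-i}$ at the outset. You make explicit the bijection between the recursive construction and nonnegative $\pm1$ paths, including the injectivity check that the paper leaves tacit. You then derive the ballot number $\binom{n}{i}-\binom{n}{i-1}$ by reflection.

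Your route does not require guessing the closed form, and it handles the nonnegativity constraint globally, so no separate boundary case arises. In the paper the $m=0$ case in fact needs the symmetry $\binom{n-1}{n/2-1}=\binom{n-1}{n/2}$ as well as Pascal's rule. The paper's route, in exchange, is a purely mechanical induction with no bijective step.

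Both approaches adapt to $NC(\ell)_n$ in Proposition~\ref{identity_cardinality_NC}. For you, the first $n$ steps are forced up-steps, and reflecting paths started at height $n$ about $-1$ produces exactly the correction term $\binom{t}{i-n-1}$.

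You were also right to work from the recursion rather than from the name. The recursive $NC(3)$ omits $\{\{1,3\},\{2\}\}$. Counting all non-crossing partial matchings would give Motzkin-type numbers, which already disagree with the formula at $n=3$.
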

\begin{proof}
Denote by $A_{n,m}$ the number of words appearing in $NC(n)$ with length equal to $m$. We know that
\[
A_{0, 0} = 1, \qquad A_{0, m} = 0 \, \text{ for any } \, m \geq 1. 
\]
Moreover, it is clear from the definition of $NC(n)$ that 
\begin{align}\label{empty_SD_words_recursion}
A_{n, 0} = A_{n-1, 1}.
\end{align}
For $m \geq 1$ we also have the following relationship
\begin{align} \label{SD_words_recursion}
    A_{n, m} = d A_{n-1, m-1} + A_{n-1, m+1},
\end{align}
in fact, we can obtain a word in $NC(n)$ with length $m$ by either appending a letter at the end of a word with length $m-1$ in $NC(n-1)$
or by removing the last letter to each word with length $m+1$ in the set $NC(n-1)$.\newline
Additionally notice that for $m\geq1$, either $m$ has the same parity of $n$ or $A_{n,m} = 0$.\newline
The next step is to show that for $m \equiv n \, mod(2)$
\[
A_{n, m} = d^{\frac{n+m}{2}} \left( \binom{n}{\frac{n+m}{2}} - \binom{n}{\frac{n+m}{2} + 1}  \right).
\] 
We proceed by induction over $n$. For the base case we have 
\[
A_{0,0} = 1, \qquad A_{0,m} = 0 \text{ for }\, m \geq 1. 
\]
For the induction step, suppose the identity holds for $n-1$. Fix $m \geq 1$ and define 
$u :=  \frac{m +n}{2}$, now using the identity in equation \eqref{SD_words_recursion} and the induction hypothesis allows to obtain 
\begin{align*}
    A_{n,m} &= d A_{n-1, m-1} + A_{n-1, m+1} \\
            &= d d^{u-1} \left( \binom{n-1}{u-1} - \binom{n-1}{u} \right) + d^u\left( \binom{n-1}{u} - \binom{n-1}{u+1} \right) \\
            &= d^u\left( \binom{n-1}{u-1} - \binom{n-1}{u+1} \right) \\
            &= d^u\left( \binom{n}{u} - \binom{n}{u+1} \right)
\end{align*}
where the last step uses Pascal' identity.
This concludes for the case $m \geq 1$. 
For the case $m = 0$, we have by equation \eqref{empty_SD_words_recursion} and Pascal's identity that
\begin{align*}
    A_{n,0} &=  A_{n-1, m+1} \\
            &=  d^{n/2}\left( \binom{n-1}{n/2} - \binom{n-1}{n/2+1} \right). 
\end{align*}
Now it is clear that the cardinality of $NC(n)$ satisfies the identities 
\begin{align*}
    \sum_{m \geq 0} A_{n,m} = \sum_{m \geq 0} d^{\frac{n+m}{2}} \left( \binom{n}{\frac{n+m}{2}} - \binom{n}{\frac{n+m}{2} + 1}  \right),
\end{align*}
defining $i := \frac{n - m}{2}$ we get
\begin{align*}
    &= \sum_{i=0}^{\floor{n/2}} d^{n-i} \left( \binom{n}{n-i} - \binom{n}{n - i + 1}  \right)\\
    &= \sum_{i = 0}^{\floor{n/2}} \frac{n -2i +1}{n+1} \binom{n+1}{i} d^{n - i},
\end{align*}
concluding the proof.
\end{proof}

This rule can be generalized to cover the case where pairings between the first $n$ letters are ruled out. 

\begin{proposition}\label{identity_cardinality_NC}
For $\ell \geq n$, the cardinality of the set $NC(\ell)_n$ over the alphabet $\{1,\dots,d\}$ is
\[
|NC(\ell)_n|
= d^n \sum_{i=0}^{\lfloor (t+n)/2 \rfloor}
\left( \binom{t}{i} - \binom{t}{i-n-1} \right) d^{\,t-i},
\]
with $t := \ell - n$.
\end{proposition}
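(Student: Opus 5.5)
The plan is to reuse the lattice-path encoding from the proof of the previous proposition, now started from a nonzero height, and then count the paths with the reflection principle. As before, each matching $\pi\in NC(\ell)_n$ together with an admissible labelling (letters in $\{1,\dots,d\}$ with $\delta_{i_p,i_q}=1$ on pairs) is built one index at a time. At step $k$ there are two options.
\begin{itemize}
\item Leave index $k$ unmatched: this is an up-step, and the new letter is free, giving a factor $d$.
\item Pair index $k$ with the current $\max U(\pi)$: this is a down-step, and the letter is forced to equal its partner, giving a factor $1$.
\end{itemize}
The height after step $k$ is $|U(\pi)|$ after $k$ steps. Each element then contributes weight $d^{\#\text{up-steps}}=d^{\ell-\#\text{pairs}}$, consistent with the count in the previous proposition.

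The first step is to handle the constraint in the definition of $NC(\ell)_n$. Among the first $n$ indices no pair may join two of them. Since a down-step at a position $k\le n$ would pair $k$ with an earlier index also in $\{1,\dots,n\}$, the constraint is exactly that the first $n$ steps are all up-steps. This contributes the prefactor $d^n$ and leaves the path at height $n$. After step $n$, any down-step pairs a new index $k>n$ with some earlier index. Such a pair never joins two indices in $\{1,\dots,n\}$, so no further constraint arises. Hence $|NC(\ell)_n|$ equals $d^n$ times the weighted number of lattice paths of length $t=\ell-n$ that start at height $n$, use steps $\pm1$, never go below $0$, and carry weight $d$ per up-step.

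The second step is the count. Fix the number $i$ of down-steps, so the weight is $d^{t-i}$. Without the positivity constraint there are $\binom{t}{i}$ such paths. For the paths that touch height $-1$, reflect the initial segment up to the first visit to $-1$. This gives a bijection with unconstrained paths from height $-n-2$ to the same endpoint $n+t-2i$. Such paths have $t-i+n+1$ up-steps and $i-n-1$ down-steps, so there are $\binom{t}{i-n-1}$ of them. Summing over $i$ gives
\[
|NC(\ell)_n| = d^n\sum_{i}\left(\binom{t}{i}-\binom{t}{i-n-1}\right)d^{t-i}.
\]
It remains to check the range of summation: $i\le t$ trivially, and the stated upper limit $\lfloor (t+n)/2\rfloor$ is where the endpoint height $n+t-2i$ would become negative. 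For $i$ beyond it the bracket vanishes, since then $\binom{t}{i}=\binom{t}{t-i}$ and $\binom{t}{i-n-1}=\binom{t}{t-i+n+1}$ coincide for those paths. So the sums agree. As a consistency check, the case $n=0$ recovers the previous proposition.

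I expect the main obstacle to be the first step: justifying rigorously that the recursive definition of $NC$ (pairing only with $\max U(\pi)$) matches the height-path bijection when the starting height is $n$. I would prove this by induction on $t$, mirroring recursion \eqref{SD_words_recursion}. The point to handle carefully is the step from height $0$, where no down-step is available; this is the analogue of \eqref{empty_SD_words_recursion}, and it is exactly what the reflection term accounts for.
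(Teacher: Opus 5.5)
Your proof is correct and takes a different route from the paper. The paper only records the recursion $A_{\ell,0}=A_{\ell-1,1}$ and $A_{\ell,m}=dA_{\ell-1,m-1}+A_{\ell-1,m+1}$ for $\ell>n$, with initial data $A_{n,m}=d^n\delta_{m,n}$, and then invokes ``a simple induction''. That induction requires guessing the refined count $A_{\ell,m}=d^{(t+n+m)/2}\bigl(\binom{t}{(t+n-m)/2}-\binom{t}{(t-n-m)/2-1}\bigr)$, checking it against the recursion with Pascal's rule, and summing over $m$. Your lattice-path encoding is the same recursion read as a transfer rule for weighted $\pm1$ paths. Your observation that the restriction defining $NC(\ell)_n$ means exactly ``the first $n$ steps go up'' is what the paper's initial condition encodes. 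The reflection principle then derives the answer rather than verifying a guessed one, and it explains the subtracted term $\binom{t}{i-n-1}$ as the reflection of the starting height $n$ across $-1$. The step you flag as the main obstacle is actually immediate. Deleting $\ell$ from $\pi\in NC(\ell)$, and turning its partner back into a singleton if it has one, recovers the unique parent in $NC(\ell-1)$. Hence construction histories and elements are in bijection.

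One sentence of your range check is false and should be dropped. It is not true that $\binom{t}{i}-\binom{t}{i-n-1}$ vanishes for $i>\lfloor (t+n)/2\rfloor$. For $n=0$, $t=4$, $i=3$ it equals $4-6=-2$, so summing the reflection formula over all $0\le i\le t$ gives the wrong total. The correct justification is the one you give first. For $i>\lfloor (t+n)/2\rfloor$ the endpoint $n+t-2i$ is negative, so there are no admissible paths and these $i$ contribute nothing. Moreover, the reflection count is valid only when the endpoint is at least $-1$; below that, paths from $-n-2$ need not meet $-1$. So the restriction $0\le i\le\lfloor (t+n)/2\rfloor$ must be imposed before applying the formula, not afterwards. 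Terms with $t<i\le\lfloor (t+n)/2\rfloor$ can occur when $n>t$, but they vanish because both binomial coefficients are zero. Your remark about $i\le t$ is therefore harmless.
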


\begin{proof}
The proof of the identity follows in a very similar way as the formula for the cardinality of $NC(n)$ so we will just sketch the proof for this case.\newline
Denote by $A_{\ell,m}$ the number of words appearing in $NC(\ell)_n$ with length equal to $m$. It is easy to see that
\[
A_{n, m} = \begin{cases}
d^n & m = n,\\
0 & \text{ otherwise,}
\end{cases}
\]
For $\ell > n$ and $m =0$, we have the recursion 
\begin{align*}
A_{\ell, 0} = A_{\ell-1, 1}.
\end{align*}
For $\ell > n$ and $m \geq 1$, we have the following relationship
\begin{align*}
    A_{\ell, m} = d A_{\ell-1, m-1} + A_{\ell-1, m+1}.
\end{align*}
Just like the previously analysed case, a simple induction argument allows to conclude the proof.
\end{proof}

\begin{corollary}
For every $t \in (0 : n]$, we have the following upper and lower bounds for $NC(\ell)_n$:
\[
 \frac{1}{2} d^n (d+1)^t
\leq | NC(\ell)_n |
\leq d^n (1+d)^t.
\]
Moreover,  the right inequality holds for every $t > 0$.
\end{corollary}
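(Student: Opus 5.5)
The plan is to read both bounds off the closed formula of Proposition~\ref{identity_cardinality_NC},
\[
|NC(\ell)_n| = d^n \sum_{i=0}^{\lfloor (t+n)/2\rfloor}\Bigl(\tbinom{t}{i}-\tbinom{t}{i-n-1}\Bigr)d^{\,t-i},\qquad t=\ell-n,
\]
and compare it with the binomial expansion $(1+d)^t=\sum_{i=0}^{t}\binom{t}{i}d^{t-i}$. Throughout, $\binom{t}{j}=0$ whenever $j<0$ or $j>t$. No new combinatorics should be needed beyond this comparison.

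\emph{Upper bound, valid for every $t>0$.} I will bound each summand by dropping the subtracted coefficient, which is nonnegative: $\binom{t}{i}-\binom{t}{i-n-1}\le\binom{t}{i}$. The indices $i>t$ contribute $0-\binom{t}{i-n-1}\le 0$, so discarding them only increases the sum. The remaining indices all satisfy $i\le\min(t,\lfloor (t+n)/2\rfloor)\le t$. Since every omitted term of the binomial expansion is nonnegative, the sum is therefore at most $\sum_{i=0}^{t}\binom{t}{i}d^{t-i}=(1+d)^t$. Multiplying by $d^n$ gives the right inequality.

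\emph{Lower bound for $t\in(0:n]$.} Here I expect the formula to collapse to an equality. For every $i$ in the summation range,
\[
i-n-1\le \tfrac{t+n}{2}-n-1=\tfrac{t-n}{2}-1<0,
\]
so all subtracted binomials vanish. Moreover, $t\le n$ gives $(t+n)/2\ge t$. Since $t$ is an integer, this yields $\lfloor (t+n)/2\rfloor\ge t$, so the summation covers the whole range $0\le i\le t$, and the extra indices $i>t$ contribute zero. Hence $|NC(\ell)_n|=d^n(1+d)^t$, which is at least $\tfrac12 d^n(1+d)^t$.

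The only step needing care is the bookkeeping of the summation limits against $t$. Specifically, one must check that $\lfloor (t+n)/2\rfloor\ge t$ exactly when $t\le n$, and that the negative-index convention for the binomials is the one used in Proposition~\ref{identity_cardinality_NC}. I do not anticipate a genuine obstacle. If the intended formula instead had a truncation cutting off part of the binomial sum, the fallback would be to show that the truncated terms form at most half of $(1+d)^t$, using the symmetry of the binomial coefficients around $t/2$; that is where the factor $\tfrac12$ would come from.
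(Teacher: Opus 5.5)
Your upper bound is the same argument as the paper's: drop the subtracted binomial, restrict to $i\le\min(t,\lfloor (t+n)/2\rfloor)$, and dominate by the full expansion of $(1+d)^t$.

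For the lower bound you take a genuinely different and sharper route. The paper rewrites the formula as
$(d+1)^t\bigl(d^n\,\mathbb{P}(V\le\lfloor (n+t)/2\rfloor)-d^{-1}\,\mathbb{P}(V\le\lfloor (n+t)/2\rfloor-n-1)\bigr)$
with $V\sim B(t,\tfrac{1}{d+1})$. It then discards the second term and bounds the first probability below by $\tfrac12$, invoking only that $\tfrac{1}{d+1}\le\tfrac12$.

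You instead observe that for $1\le t\le n$ every subtracted binomial vanishes, and that the truncation is inactive because $\lfloor (t+n)/2\rfloor\ge t$. Hence $|NC(\ell)_n|=d^n(1+d)^t$ exactly. In the paper's language, the first probability equals $1$ and the second equals $0$.

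Your version has three advantages. It is more elementary. It gives equality rather than losing a factor $\tfrac12$. It also makes explicit why the paper's step of discarding the second term is legitimate for a lower bound: dropping a subtracted nonnegative quantity would in general only give an upper bound, so the step works only because that term is zero in this range. Combinatorially, your observation just says that a weighted walk started at height $n$ cannot reach $-1$ in $t\le n$ steps.

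The paper's probabilistic framing would matter only if one tried to extend the lower bound to $t>n$, where the truncation and the reflection term genuinely act. The corollary does not claim that, so your fallback paragraph is unnecessary.
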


\begin{proof}
The proof of the right inequality immediately from the definition of $NC(\ell)_n$
\begin{align*}
NC(\ell)_n
&\leq d^n \sum_{i=0}^{\lfloor (t+n)/2 \rfloor}
\binom{t}{i} d^{\,t-i} \\
&= d^n \sum_{i=0}^{\min\left(t, \lfloor (t+n)/2 \rfloor\right)}
\binom{t}{i} d^{\,t-i} \\
&\leq d^n \sum_{i=0}^{t}
\binom{t}{i} d^{\,t-i} \\
&\leq d^n (1+d)^t.
\end{align*}
For the left inequality, define the binomially distributed random variable $V \sim B(t, \frac{1}{d+1})$. Then
\begin{align*}
    |NC(\ell)_n| &= (d+1)^t\left( d^n \mathbb{P}(V \leq \floor{(n+t)/2}) - d^{-1} \mathbb{P}(V \leq \floor{(n+t)/2} - n - 1) \right),
\end{align*}
so that if $t \leq n$, recalling that $\frac{1}{d+1} \leq \frac{1}{2}$ then
\begin{align*}
    NC(\ell)_n &\geq (d+1)^t\left( d^n \mathbb{P}(V \leq \floor{(n+t)/2})\right)\\
    &\geq \frac{1}{2} (d+1)^t  d^n .
\end{align*}
This concludes the proof.

\end{proof}

\begin{corollary}
For every $1 \le i \le m \le j \le 2^N - 1$, let $\mathrm{Poly}$ denote the number of operations required to compute
\[
\sum_{|\omega| = 0}^{\kappa-1} \sum_{(\alpha, \beta) \in Sh^{-1}(\omega)} (-1)^{|\beta|}  
K^{(\alpha)}_{\tilde{I}}(i,m)\, K^{(\beta)}_{\hat{I}}(m,j+1).
\]
for every word $\gamma$ with length $|I| \leq \kappa + \zeta$.\newline
Then 
\[
c
(2d)^{\kappa + \zeta-1} (d+1)^\kappa \leq  \mathrm{Poly} \leq C
(2d)^{\kappa + \zeta-1} (d+1)^\kappa,
\]
where the constants $C, c \geq 0$ do not depend on the other parameters appearing in the estimate above.
\end{corollary}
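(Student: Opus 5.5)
We need to bound the operation count of the sum
\[
\sum_{|\omega|=0}^{\kappa-1}\ \sum_{(\alpha,\beta)\in Sh^{-1}(\omega)} (-1)^{|\beta|}\, K^{(\alpha)}_{\tilde I}(i,m)\,K^{(\beta)}_{\hat I}(m,j+1),
\]
counted over all words $I$ with $|I|\le\kappa+\zeta$. The plan is to reduce this to a counting problem for non-crossing partial matchings. Each coefficient $K^{(\alpha)}_{\tilde I}$ (resp.\ $K^{(\beta)}_{\hat I}$) is, by Remark~\ref{remark_smooth_numerical}, a signed sum over $NC(|\tilde I|+|\alpha|)_{|\tilde I|}$ (resp.\ $NC(|\hat I|+|\beta|)_{|\hat I|}$) of entries of $K_\bullet$. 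Evaluating such a coefficient costs $\Theta(1)$ operations per matching, since each term is a product of Kronecker deltas times a stored entry. So $\mathrm{Poly}$ is, up to absolute constants, the number of tuples (word $I$, word $\omega$, unshuffle $(\alpha,\beta)$, matching for the $\alpha$-factor, matching for the $\beta$-factor). I will bound this count from above and below by the same expression.

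First, fix the lengths. Write $n=|I|$, $\ell=\lceil n/2\rceil$, so $|\tilde I|=\ell-1$ and $|\hat I|=n-\ell$. Fix also $|\omega|=r\le\kappa-1$ and a splitting $r=a+b$. The number of pairs $(\omega,(\alpha,\beta))$ with $|\alpha|=a$, $|\beta|=b$ is $\binom{r}{a}d^r$: choose the positions of $\alpha$ inside $\omega$, then the letters. Combining this with Proposition~\ref{identity_cardinality_NC} and the Corollary just proved, the matchings contribute a factor between $\tfrac12 d^{\ell-1}(d+1)^a$ and $d^{\ell-1}(d+1)^a$ for the $\alpha$-side. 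The $\beta$-side similarly gives a factor comparable to $d^{n-\ell}(d+1)^b$.

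Two things need care here. The letters of $\omega$ appear both in the $d^r$ count and inside the matching count, so I must avoid double counting by treating the letters of $\alpha,\beta$ as the "$t$" part of $NC(\cdot)_{\cdot}$. The lower bound of the Corollary also requires $t\le n$, i.e.\ $a\le\ell-1$. When that fails, I will use only the upper bound and keep the term $\omega=\emptyset$ (or $a\le\ell-1$) for the lower bound.

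Now sum. For fixed $n$, the sum over $a+b=r$ of $\binom{r}{a}(d+1)^r$ gives $2^r(d+1)^r$. The sum over $I$ of length $n$ contributes $d^{n}$, and the matching prefactors contribute $d^{n-1}$. Summing over $r\le\kappa-1$ is dominated by its top term, giving a factor $\asymp (2(d+1))^{\kappa-1}$. Summing the geometric series over $n\le\kappa+\zeta$ is dominated by $n=\kappa+\zeta$. After regrouping the powers of $d$ and $2$, I expect the total to be of order $(2d)^{\kappa+\zeta-1}(d+1)^{\kappa}$, with constants uniform in $d,\kappa,\zeta$ because every geometric sum has ratio at least $2$.

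For the lower bound, I retain only the single extremal configuration: $n=\kappa+\zeta$ and $r=\kappa-1$, restricted to the splittings allowed by the Corollary's hypothesis $t\le n$. Using the constraint on $\zeta$ imposed in Section~\ref{sec:smooth_RP_SD}, $\ell-1\ge\kappa-1\ge a$ always holds, so the factor $\tfrac12$ lower bound applies to every splitting.

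The main obstacle is this exponent bookkeeping: reconciling the powers of $d$ from the word choices, the unshuffle letters, and the matching counts so that they collapse exactly to $(2d)^{\kappa+\zeta-1}(d+1)^\kappa$. If they do not match on the first attempt, the first thing I would try is to reassign which factor of $(d+1)$ absorbs the unshuffle letters.
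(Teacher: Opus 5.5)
Your tuple model of $\mathrm{Poly}$ (a word $I$, a word $\omega$, an unshuffle $(\alpha,\beta)$ with $\binom{|\omega|}{|\alpha|}$ position choices, and one matching on each side) is the natural one. The gap is the step you defer: the exponent bookkeeping cannot be made to close, and reshuffling factors will not fix it.

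First, you flag double counting for the letters of $\omega$ but make the same error for $I$. By Proposition~\ref{identity_cardinality_NC}, $|NC(\ell')_{n'}|$ already contains the $d^{n'}$ choices of the unpaired prefix. So the factor $d^{|I|}$ ``from the sum over $I$'' must not be multiplied onto the prefactors $d^{\ell-1}d^{|I|-\ell}$. Counting correctly, your ingredients give, for $|I|=k$ and $|\omega|=r$, a contribution of at most $\sum_a\binom{r}{a}d^{k-1}(d+1)^r=d^{k-1}\bigl(2(d+1)\bigr)^r$. The total is therefore at most a constant times $d^{\kappa+\zeta-1}\bigl(2(d+1)\bigr)^{\kappa-1}$ for $d\ge 2$. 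For $d=1$ the sum over $|I|$ has ratio $1$, not at least $2$ as you claim.

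The only powers of $2$ in your count come from $\sum_a\binom{r}{a}=2^r$ with $r\le\kappa-1$, so $2^{\kappa+\zeta-1}$ can never appear. Your total falls short of $(2d)^{\kappa+\zeta-1}(d+1)^{\kappa}$ by a factor of order $2^{\zeta}(d+1)$. This gives the stated upper bound a fortiori. Your lower-bound step ($|I|=\kappa+\zeta$, $|\omega|=\kappa-1$) yields only about $d^{\kappa+\zeta-1}2^{\kappa-1}(d+1)^{\kappa-1}$. In fact your own upper estimate, done correctly, shows the stated lower bound cannot hold for this count with constants independent of $\zeta$.

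The paper reaches the stated order through a different weight. It sums $\binom{k-1}{j}\,|NC(\lceil k/2\rceil+j-1)_{\lceil k/2\rceil-1}|\,|NC(k-\lceil k/2\rceil+n-j)_{k-\lceil k/2\rceil}|$. So the splittings are weighted by a binomial in $|I|-1$ rather than in $|\omega|$, and $\sum_j\binom{k-1}{j}\le 2^{k-1}$ is what produces $(2d)^{\kappa+\zeta-1}$.

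For the lower bound, the paper keeps $k=\kappa+\zeta$ together with a term $|\omega|=\kappa$. This lies outside the range $|\omega|\le\kappa-1$, and it is where the extra $(d+1)$ comes from. It then uses $\sum_{j\le\kappa-3}\binom{\kappa+\zeta-1}{j}\ge 2^{\kappa+\zeta-3}$, which fails for large $\zeta$ (and the sum is empty when $\kappa=2$).

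Your model does not justify a weight $\binom{|I|-1}{|\alpha|}$. To reach the statement you would have to explain where such a factor comes from. Otherwise, what your argument actually proves is an upper bound of order $d^{\kappa+\zeta-1}\bigl(2(d+1)\bigr)^{\kappa-1}$, which implies the stated upper bound and rules out the stated lower bound.

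Finally, the constraint on $\zeta$ does not give $\ell-1\ge\kappa-1$. It only gives $\lceil(\kappa+\zeta)/2\rceil\ge\kappa-1$, i.e.\ $\ell-1\ge\kappa-2$, and less when $\kappa\le 3$ (e.g.\ $\ell-1=0$ for $\kappa=2$, $\zeta=0$). So the hypothesis $t\le n$ of the preceding Corollary is not available for every splitting. There you would need a separate bound, such as $|NC(\ell')_{n'}|\ge d^{\ell'}$.
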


\begin{proof} 
The uniform upper bound for $|NC(\ell)_n|$ yields
\begin{align*}
&\sum_{k=1}^{\kappa + \zeta}
\sum_{n=0}^{\kappa-1}
\sum_{j=0}^{n} 
\binom{k-1}{j}\, 
\left| NC\left(\ceil{k/2} + j - 1\right)_{\ceil {k/2}-1}\right|
\, 
\left|NC\left(k- \ceil{k/2}  + n - j\right)_{k -\ceil{k/2}}\right| \\
&\leq \sum_{k=1}^{\kappa + \zeta}
\sum_{n=0}^{\kappa-1}
\sum_{j=0}^{n} 
\binom{k-1}{j}\, d^{k-1}(d+1)^{n} \\
&\leq \sum_{k=1}^{\kappa + \zeta}
\sum_{n=0}^{\kappa-1}
2^{k-1} d^{k-1}(d+1)^{n} \\
&\leq
C (2d)^{\kappa + \zeta-1} (d+1)^\kappa.
\end{align*}
For the lower bound, we get
\begin{align*}
&\sum_{k=1}^{\kappa + \zeta}
\sum_{n=0}^{\kappa-1}
\sum_{j=0}^{n} 
\binom{k-1}{j}\, 
\left| NC\left(\ceil{k/2} + j - 1\right)_{\ceil{k/2}-1}\right|
\, 
\left|NC\left(k- \ceil{k/2}  + n - j\right)_{k -\ceil{k/2}}\right| \\
&\geq \sum_{j=0}^{\kappa-1}
\binom{\kappa+\zeta-1}{j}\,
\left|NC\left(\ceil{\frac{\kappa+\zeta}{2}} + j - 1\right)_{\ceil{\frac{\kappa+\zeta}{2}}-1} \right|
\, \left|
NC\left(\kappa + \zeta -\ceil{\frac{\kappa+\zeta}{2}}  + \kappa - j\right)_{\kappa + \zeta-\ceil{\frac{\kappa+\zeta}{2}}} \right| \\
&\geq (d+ 1)^{\kappa} \left( \frac{1}{2} d^{\ceil{\frac{\kappa + \zeta}{2}}-1} \right) \left( \frac{1}{2} d^{{\kappa + \zeta - \ceil{\frac{\kappa + \zeta}{2}}}} \right) \sum_{j=0}^{\kappa-3} \binom{\kappa+\zeta-1}{j} \\
&\geq c (d+ 1)^{\kappa} d^{\kappa + \zeta -1} \sum_{j=0}^{\kappa-3} \binom{\kappa+\zeta-1}{j} \\
&\geq c 2^{\kappa + \zeta -  3} d^{\kappa+\zeta -1} (d+ 1)^{\kappa},
\end{align*}
where the  constant $c$ above is independent of the parameters of interest.\newline
This concludes the proof.
\end{proof}

\begin{corollary}\label{corollary_trace_lvl}

For every $1 \le i \le m \le 2^N - 1$, let $\mathrm{Poly}_{\emptyset}$ denote the number of operations required to compute
\[
\sum_{|\omega| = 1}^{\kappa} \sum_{m=i+1}^{j+1} K^{(\omega)}(i,m) \, \mathbf{X}^{(\omega)}_{m-1, m}.
\]

Then, for $d > 1$, there exists $\alpha^* \in [0,1/2]$ that maximizes the expression on the left, such that
\[
c\left(d^{1 - \alpha^*} \frac{2^{H(\alpha^*)}}{\sqrt{\kappa + \zeta}}\right)^{\kappa + \zeta}
\;\leq\; \mathrm{Poly}_{\emptyset} \;\leq\; C(d+1)^{\kappa+\zeta},
\]
where $H(\alpha) = -\alpha \log_2 \alpha - (1-\alpha) \log_2 (1-\alpha)$ is the binary entropy function and some constants $C,c \geq 0$ independent from $\kappa, \zeta$ and $d$.

In the special case $d=1$, we have
\[
\frac{2^{\kappa + \zeta}}{\kappa + \zeta + 1}  \leq \mathrm{Poly}_{\emptyset} \leq C\left(2^{\kappa + \zeta}\right).
\]

\end{corollary}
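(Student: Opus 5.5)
We will bound $\mathrm{Poly}_{\emptyset}$ by counting the terms produced when each $K^{(\omega)}$ is rewritten in terms of the stored components of $K_\bullet$. By Proposition~\ref{Gubinelli derivatives} and Remark~\ref{remark_smooth_numerical}, each coefficient $K^{(\omega)}(i,m)$ equals a signed sum, indexed by $\pi\in NC(|\omega|)$, of components $K_{i_{U_1}\dots i_{U_r}}(i,m)$. Running over all words $\omega$ of a given length $n$, the number of resulting elementary products is therefore $|NC(n)|$ over the alphabet $\{1,\dots,d\}$, up to a constant factor that covers the multiplication by $\mathbf{X}^{(\omega)}_{m-1,m}$ and the accumulation. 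Closing the system at truncation level $\kappa+\zeta$ means the relevant lengths run up to $n_*:=\kappa+\zeta$. Hence, up to constants independent of $\kappa,\zeta,d$,
\[
\mathrm{Poly}_{\emptyset}\asymp \sum_{n=1}^{n_*}|NC(n)|,
\qquad
|NC(n)|=\sum_{i=0}^{\lfloor n/2\rfloor}\frac{n-2i+1}{n+1}\binom{n+1}{i}d^{\,n-i},
\]
where the closed form for $|NC(n)|$ is the counting proposition proved just above. The main step is this reduction; the rest is estimating the closed form.

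For the upper bound, drop the factor $\frac{n-2i+1}{n+1}\le 1$ and extend the sum over $i$ to all $0\le i\le n+1$. The binomial theorem then gives $|NC(n)|\le d^{-1}(d+1)^{n+1}$. Summing this geometric sequence over $n\le n_*$ yields $\mathrm{Poly}_{\emptyset}\le C(d+1)^{\kappa+\zeta}$, with $C$ absorbing the factor $\frac{(d+1)^2}{d\cdot d}\le 4$ coming from the geometric sum.

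For the lower bound, keep only the top level $n=n_*$ and a single index $i=\lfloor \alpha n_*\rfloor$, with $\alpha\in[0,1/2]$ chosen to maximise $d^{1-\alpha}2^{H(\alpha)}$; this maximiser is $\alpha^*$. We then apply the Stirling-type estimate $\binom{n+1}{\alpha n}\ge c\,2^{nH(\alpha)}/\sqrt{n}$. The prefactor $\frac{n-2i+1}{n+1}$ is at least $\frac{1}{n+1}$, which is a genuine loss only when $\alpha^*=1/2$. The rounding of $\alpha n$ costs at most a factor of $d$ and $\mathrm{poly}(n)$. All of these losses are at most polynomial in $n_*$, so they are absorbed into the very weak factor $n_*^{-n_*/2}$ that the statement allows. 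This gives the bound $c\,(d^{1-\alpha^*}2^{H(\alpha^*)}/\sqrt{\kappa+\zeta})^{\kappa+\zeta}$.

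For $d=1$ the count telescopes to a central binomial coefficient: $|NC(n)|=\binom{n}{\lfloor n/2\rfloor}$. Then $\binom{n}{\lfloor n/2\rfloor}\ge 2^n/(n+1)$, since it is the largest of the $n+1$ binomial coefficients, which gives the lower bound. The upper bound follows from $\sum_{n\le n_*}2^n\le 2^{n_*+1}$.

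The step I expect to need the most care is the first one: justifying that the per-word cost of evaluating $K^{(\omega)}$ really is $|NC(|\omega|)|$, and that the range of lengths extends to $\kappa+\zeta$ rather than only $\kappa$. This requires tracing through Remark~\ref{remark_smooth_numerical} and the closure condition on $\zeta$.
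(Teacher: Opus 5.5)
Your proposal is correct and follows essentially the same route as the paper: identify $\mathrm{Poly}_{\emptyset}$ with $\sum_{k\le\kappa+\zeta}|NC(k)|$, bound the closed form above by the binomial theorem, and bound it below by a single entropy-dominant binomial term. The paper asserts that identification without argument, so the step you flag is not justified any further there either. Your version is slightly cleaner in two places: for $d=1$ you use the exact identity $|NC(n)|=\binom{n}{\lfloor n/2\rfloor}$ rather than the cruder bound $\frac{k-2i+1}{k+1}\ge\frac{1}{k+1}$, and for $d>1$ you absorb all polynomial losses into the $(\kappa+\zeta)^{-(\kappa+\zeta)/2}$ factor, which keeps the constant independent of $d$; the paper's constants depend on $\alpha$ and hence on $d$.
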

\begin{proof}
One can see that, 
\begin{align*}
&\text{Poly}_{\emptyset} = \sum_{k = 1}^{2\kappa - 2} |NC(k)| \\
& \quad = \sum_{k = 1}^{\kappa +\zeta}  \sum_{i = 0}^{\floor{k/2}} \frac{k -2i +1}{k+1} \binom{k+1}{i} d^{k - i}\\
& \quad \leq \sum_{k = 1}^{\kappa + \zeta} d^{k} ( 1 + 1/d)^{k+1} \leq C (d+1)^{\kappa+\zeta}.
\end{align*}
Moreover, for $d > 1$, recalling that
\[
\sum_{i=0}^{\floor{k/2}} \binom{k+1}{i} d^{-i} \geq \sup_{0 \leq \alpha \leq 1/2 }   \binom{k+1}{\floor{k\alpha}} d^{-\floor{k\alpha}} \geq C_\alpha \frac{2^{H(\alpha) k }}{\sqrt{k}} d^{-\floor{k\alpha}},
\]
where $H(\alpha) = -\alpha \log_2\alpha -(1-\alpha)\log_2(1-\alpha)$,
we also have
\begin{align*}
\text{Poly}_{\emptyset} &= \sum_{k = 1}^{\kappa + \zeta}  \sum_{i = 0}^{\floor{k/2}} \frac{k -2i +1}{k+1} \binom{k+1}{i} d^{k - i} \\
&\geq \sup_{0 \leq \alpha \leq 1/2}\sum_{k = 1}^{\kappa + \zeta} c_\alpha d^{k - \floor{k\alpha}} \binom{k}{\floor{k\alpha}} \\
&\geq \sup_{0 \leq \alpha \leq 1/2}\sum_{k = 1}^{\kappa + \zeta} C_\alpha d^{k - \floor{k\alpha}} \frac{2^{H(\alpha)k}}{\sqrt{k}}  \\
& \geq c \left(d^{(1 - \alpha^*)} \frac{2^{H(\alpha^*)k}}{\sqrt{\kappa + \zeta}}\right)^{\kappa + \zeta},
\end{align*}
where $\alpha^*$ is one the maximizers of the penultimate term.\newline
For $d = 1 $, using the fact that \[
\frac{k -2i +1}{k+1}  \geq \frac{1}{k+1}, 
\] we obtain
\begin{align*}
&\text{Poly}_{\emptyset} = \sum_{k = 1}^{\kappa +  \zeta} |NC(k)| \\
& \quad = \sum_{k = 1}^{\kappa + \zeta} \sum_{i = 0}^{\floor{k/2}} \frac{k -2i +1}{k+1} \binom{k+1}{i}\\ 
& \geq \sum_{k = 1}^{\kappa + \zeta}  \frac{1}{k+1} 2^{k} \\
& \geq \frac{2^{\kappa + \zeta}}{\kappa + \zeta -1}.
\end{align*}
This concludes the proof.
\end{proof}

\subsection{Computational complexity}
\label{subsec:complexity}

We analyze the cost of the solver in two stages: (i) compile-time preprocessing
(which depends only on the algebraic specifications and is reused across paths),
and (ii) runtime dynamic programming (DP) for a given sequence of increments.

\paragraph{Notation.}
Let
\begin{itemize}
  \item $2^N \in \mathbb{N}$ be the time grid size,
  \item $D$ be the total dimension of the tensor algebra $T^{\leq \kappa + \zeta}(\mathbb{R}^d)$,
  \item $\mathrm{Poly}$ be the number of monomials in the compiled polynomial representation,
  \item $\mathrm{Poly}_{\mathrm{lin}}$ be the number of right-linear monomials, i.e.\ terms with $\tilde{\gamma} = \alpha =\emptyset$,
  \item $\mathrm{Poly}_{\emptyset}$ be the number of terms in the empty-word rule used for row replacement.
\end{itemize}

% ------------------------------------------------------------
\subsubsection{Compile-time preprocessing}
% ------------------------------------------------------------

The compile-time container performs two main precomputations.

\paragraph{(1) Empty-word rule.}
For the trace level computation, expressed in the inner sum of equation \eqref{trace_numerical}, loop over all non-empty basis words and expands each word via
non-crossing pairings. From Corollary \ref{corollary_trace_lvl} we know the number of operation for this step is equal to
\begin{equation}
  \label{eq:empty-rule-cost}
  \mathrm{Cost}_{\emptyset}
  \; = \;\mathcal{O}\left(
    \sum_{w \neq \emptyset} |w| \, NC(|w|)\right)
  \;=\; \mathcal{O} \left(
  \mathrm{Poly}_{\emptyset}\right).
\end{equation}
This cost is paid once per compiled solver and is amortized across all runtime executions.

\paragraph{(2) Right-linear part extraction.}
Extracting the right-linear part of the polynomial, which is used to compute $A_j$, can be implemented by a boolean mask applied to arrays of length $\mathrm{Poly}$, hence
\begin{equation}
  \label{eq:lin-extract-cost}
  \mathrm{Cost}_{\mathrm{lin\_extract}}
  \;=\;\mathcal{O}\left(
  \mathrm{Poly}\right).
\end{equation}

\subsubsection{Runtime algorithm}

The runtime solver computes a triangular DP table $S[i][j]$ for $0 \le i \le j \le 2^N$.
The number of DP states is $\mathcal{O}(2^{2N})$.

\paragraph{Per-state cost.}
Fix $(i,j)$ with $0 \le i \le j \le 2^N-1$. Throughout, we will use a constant $C$ that is independent of the parameters of interest and is allowed to change from line to line.\newline
The computation consists of four main steps.

\paragraph{(A) Building $A_j$.}
For a fixed increment $ \mathbf{X}_{j, j+1}$, the linear operator $A_j$ is assembled from the right-linear monomials following equation \eqref{term_A}.\newline
Thus, ignoring dense allocation, the arithmetic work is $\mathcal{O}(\mathrm{Poly}_{\mathrm{lin}})$.
However, the implementation stores $A_j$ as a dense $D\times D$ array, so allocating and zeroing costs $\mathcal{O}(D^2)$.
Since $A_j$ depends only on $j$ (not on $i$) and is cached, this cost is paid only once per $j$
\begin{equation}
  \label{eq:A-build-cost}
   \mathrm{Cost}(A_j)
  \;\leq\;
  C \!\left(
    D^2 + \mathrm{Poly}_{\mathrm{lin}}
  \right),
\end{equation}
for some constant $C \geq 0$ independent of $\mathrm{Poly}_{\mathrm{lin}}$ and $D$.
\paragraph{(B) Building the right-hand side $b_{i,j}$.}
The right hand side is a sum of polynomial evaluations requring the coordinates of the controlled path and the rough path increments.
Each call performs $\mathcal{O}(\mathrm{Poly})$ multiplications plus a scatter-add into $\mathbb{R}^D$.
For fixed $(i,j)$, the algorithm performs one such evaluation plus a sum over $m=i+1,\dots,j$, i.e.\ $\mathcal{O}(2^N)$ calls.
Therefore,
\begin{equation}
  \label{eq:b-build-cost}
  \mathrm{Cost}(b_{i,j})
  \;=\;
  C 2^N \,\mathrm{Poly}.
\end{equation}

\paragraph{(C) Empty-word enforcement (row replacement).}
The empty-word rule, that specifies the trace level identity,  modifies a single row by a scatter-add over $\mathrm{Poly}_{\emptyset}$ entries.
This costs $\mathcal{O}(\mathrm{Poly}_{\emptyset})$ per state, and in practice is lower order compared with
\eqref{eq:b-build-cost} and the linear solve below.

\paragraph{(D) Linear solve.}
Each DP state solves a dense linear system
\[
(Id_D - A_j)\, x = b_{i,j} \in \mathbb{R}^D,
\]
using a direct dense solver. This costs
\begin{equation}
  \label{eq:solve-cost}
  \mathrm{Cost}_{\mathrm{solve}}(i,j)
  \;=\;
  \mathcal{O}(D^3).
\end{equation}

\paragraph{Total runtime cost.}
Summing \eqref{eq:b-build-cost} and \eqref{eq:solve-cost} over the $\mathcal{O}(2^{2N})$ DP states yields
\begin{equation}
  \label{eq:total-runtime}
  \mathrm{Cost}_{\mathrm{runtime}}
  \;\leq\;
  C\!\left(
    2^{2N} (2^N\,\mathrm{Poly} + D^3)
  \right)
  \;\leq\;
  C\left(
    2^{3N}\,\mathrm{Poly} \;+\; 2^{2N}\,D^3
  \right).
\end{equation}
Including the cached assembly of $A_j$ from \eqref{eq:A-build-cost} adds the lower-order term
$C2^N\,(D^2 + \mathrm{Poly}_{\mathrm{lin}})$, however it is worth noting that the addition of this term is not going to alter the estimate above.

\paragraph{Memory complexity.}
The DP table stores $\mathcal{O}(2^{2N})$ tensor elements in $\mathbb{R}^D$, hence
\begin{equation}
  \label{eq:memory}
  \mathrm{Memory}
  \;\leq\;
  C2^{2N} D,
\end{equation}
plus the compiled polynomial storage $\mathcal{O}(\mathrm{Poly})$ and (optionally) the cached dense matrices
$C2^N D^2$ if all $A_j$ are cached simultaneously.

\begin{remark}
In the previous section we assumed that the rough path $\mathbf{X}$ is known, which is rarely the case in practice.  \newline
If the path is sampled on a partition with $2^M$ points, we group the points into blocks of size $2^{M -N}$ and use the corresponding increments to construct the rough path, resulting in $2^{N}$ effective points. The runtime cost therefore increases by
\[ c\left(\tilde{D}(2^N - 1)d^{\tilde{D}} 2^{M-N}\right) \leq 
 \mathrm{SigCompute} \leq C\left(\tilde{D}(2^N - 1)d^{\tilde{D}} 2^{M-N}\right),
\]
where $\tilde{D}$ is the dimension of the tensor algebra $T^{\leq \kappa}(\mathbb{R}^d)$ and $C, c \geq 0$ are independent of all the parameters in the expression above.
\end{remark}

\subsubsection*{Funding}
The work of TC was supported in part by UK Research and Innovation (UKRI) through the Engineering and Physical Sciences Research Council (EPSRC) via a Programme Grant [Grant No. UKRI1010: High order mathematical and computational infrastructure for streamed data that enhance contemporary generative and large language models].\\
DC was partially supported by the European Research Council (ERC) under the European Union’s Horizon 2020 Research and Innovation Programme (ERC, Grant Agreement No 856408).\\
AI  acknowledges support from the Imperial College London Schrödinger Scholarship.\\
WT acknoledges the support by the EPSRC Centre for Doctoral Training in Mathematics of Random Systems: Analysis, Modelling and Simulation (EP/S023925/1).

\bibliographystyle{alpha}
\bibliography{biblio}
\nocite{*}
\addcontentsline{toc}{chapter}{Bibliography}
\end{document}